\def\P{{\mathbb P}}
\def\E{{\mathbb E}}
\newcommand{\var}{{\rm {Var}}}
\newcommand{\cov}{{\rm {Cov}}}
\def\N{\mathbb N}
\def\R{\mathbb R}
\newcommand\comment[1]{{}}
\DeclareMathOperator*{\argmax}{arg\,max}
\DeclareMathOperator*{\argmin}{arg\,min}
\numberwithin{equation}{section}
\theoremstyle{plain}
\newtheorem{theorem}{Theorem}[section]
\newtheorem{proposition}{Proposition}[section]
\newtheorem{lemma}{Lemma}[section]
\newtheorem{corollary}{Corollary}[section]
\theoremstyle{remark}
\newtheorem{remark}{Remark}[section]
\newtheorem{definition}{Definition}[section]
\newtheorem{example}{Example}[section]
\begin{document}

\begin{frontmatter}
\title{FUNDAMENTAL LIMITS OF EXACT SUPPORT RECOVERY IN HIGH DIMENSIONS\thanksref{T1}}
\runtitle{EXACT SUPPORT RECOVERY IN HIGH DIMENSIONS}
\thankstext{T1}{This work was partially supported by the NSF ATD grant DMS--1830293.}

\begin{aug}
\author{\fnms{Zheng} \snm{Gao}\ead[label=e1]{gaozheng@umich.edu}}
\and
\author{\fnms{Stilian} \snm{Stoev}\ead[label=e2]{sstoev@umich.edu}}

%\thankstext{t1}{ZG and SS were partially supported by the NSF ATD grant DMS--1830293.}
\runauthor{Z. Gao and S. Stoev}

\affiliation{University of Michigan, Ann Arbor}

\address{Department of Statistics\\
University of Michigan\\
1085 S. University Ave.\\
Ann Arbor, MI, 48105\\
\printead*{e1}\\
\printead*{e2}}

\end{aug}

\begin{abstract}
%
% Abstract #1
%
\comment{ We study the problem of exact support recovery of high dimensional sparse vectors under dependence. We characterize a {\em strong classification} boundary which describes the required signal sizes, as a function of the sparsity level, in order for the support recovery to be asymptotically exact.  Specifically, we show that when the signal is above a so-called strong classification boundary, several classes of well-known procedures can achieve asymptotically perfect support recovery.  This is so under arbitrary error dependence assumptions, provided that the marginal error distribution has rapidly varying tails.  

 We then show that under a very general, but not arbitrary class of dependence structures, the said boundary is tight. In particular, under this general class of error dependence, if the signal sizes are below the strong classification boundary, no thresholding procedure can achieve asymptotically exact support recovery. The concept of uniform relative stability (URS) -- a type of concentration of maxima phenomenon --  plays a key role. A complete characterization of the URS property is given for Gaussian triangular arrays in terms of their covariance structure.  Examples of non-URS errors are also given, where exact support recovery is possible for strictly smaller signal sizes.

 Finally we demonstrate, perhaps surprisingly, that thresholding procedures may not be optimal in support recovery problems especially in the regime of heavy-tailed super-exponential error distributions.}
 
 %
 % Abstract#2
 %
\comment{
We study the problem of the estimation of the support (set of non-zero components) of a sparse high-dimensional signal when observed with dependent noise. 
We characterize a phase-transition phenomenon for the asymptotically exact estimation of the signal support. 
% With the usual parameterization of the size of the support set and the signal magnitude, we characterize a phase-transition phenomenon. 
Specifically, we show that when the signal size is above a so-called strong classification boundary, several classes of well-known procedures can achieve asymptotically perfect support recovery. 
This is so under arbitrary error dependence assumptions, provided that the marginal error distribution has rapidly varying tails.  
Conversely, we show that no thresholding estimators can achieve perfect support recovery if the signal is below the boundary, under very mild, but not arbitrary dependence structures on the noise. 
Examples of errors are given, where the dependence conditions are violated and exact support recovery is possible for strictly smaller signal sizes.

The results are based on an important and little-understood concentration of maxima phenomenon, known as relative stability in the case of independent errors. 
As an important probabilistic result, we establish the complete characterization of the relative stability phenomenon for dependent Gaussian triangular arrays. 
Consequently, we obtain a rather complete understanding of support recovery problem for sparse signals observed with dependent Gaussian noise. The methods of proof involve Sudakov-Fernique and Slepian lemma techniques alongside a curious application of Ramsey theory.

Finally we demonstrate, perhaps surprisingly, that thresholding procedures may not be optimal in support recovery problems especially in the regime of heavy-tailed super-exponential error distributions.
In the case of log-concave error densities, the thresholding estimators are shown to be minimax optimal and hence in this setting, the strong classification boundary is universal.}

%
% Abstract#3
%
We study the support recovery problem for a high-dimensional signal observed with additive noise.
With suitable parametrization of the signal sparsity and magnitude of its non-zero components, we characterize a phase-transition phenomenon akin to the signal detection problem studied by Ingster in 1998. 
Specifically, if the signal magnitude is above the so-called {\em strong classification 
boundary}, we show that several classes of well-known procedures achieve asymptotically perfect support recovery as the dimension goes to infinity. 
This is so, for a very broad class of error distributions with light, rapidly varying tails which may have arbitrary dependence.  
Conversely, if the signal is below the boundary, then for a very broad class of error dependence structures, no thresholding estimators (including ones with data-dependent thresholds) can achieve perfect support recovery.
The proofs of these results exploit a certain \emph{concentration of maxima} phenomenon known as relative stability. 
We provide a complete characterization of the relative stability phenomenon for Gaussian triangular arrays in terms of their correlation structure. 
The proof uses classic Sudakov-Fernique and Slepian lemma arguments along with a curious application of Ramsey's coloring theorem. 

We note that our study of the strong classification boundary is in a finer, point-wise, rather than minimax, sense. 
We also establish results on the finite-sample Bayes optimality and sub-optimality of thresholding procedures. 
Consequently, we obtain a minimax-type characterization of the strong classification boundary for errors with log-concave densities. 
\end{abstract}

\begin{keyword}[class=MSC]
\kwd[Primary ]{62G10}
\kwd{62G32}
\kwd[; secondary ]{62G20}
\kwd{62C20}
\end{keyword}

\begin{keyword}
\kwd{support recovery}
\kwd{high--dimensional inference}
\kwd{relative stability}
\kwd{rapid variation}
\kwd{concentration of maxima}
\kwd{Sudakov-Fernique inequality}
\kwd{Ramsey theory}
\end{keyword}

\end{frontmatter}
%\tableofcontents
\section{Introduction}
\label{sec:intro}

Consider the canonical signal-plus-noise model where the observation $x$ is a high-dimensional vector in $\R^p$,
\begin{equation} \label{eq:model}
    x = \mu + \epsilon.
\end{equation}
The signal, $\mu = (\mu(j))_{j=1}^p$, is a vector with $s$ non-zero components supported on the set $S=\{j:\mu(j)\neq 0\}$; the second term $\epsilon$ is a random error vector. 
The goal of high-dimensional statistics is usually two-fold: 
\begin{enumerate}
    \item To detect the presence of non-zero components in $\mu$. That is, to test the global hypothesis $\mu = 0$, which we call the \emph{detection} problem, and
    \item To estimate the support set $S$, which we call the \emph{support recovery} problem.
\end{enumerate}
An archetypal application where the two problems arise is in cyber security \citep{kallitsis2016amon}.
Internet service providers routinely collect statistics of server to determine if there are abnormal surges or blackouts.
% The ISPs collect, for example, incoming traffic volumes to each server to determine if there are abnormal surges or blackouts.
While this monitoring is performed over a large number of servers, only very few servers are believed to be experiencing problems at any time.
The signal detection problem is then equivalent to determining if there are any anomalies among all servers; the support recovery problem is equivalent to identifying the servers with anomalies.

The same two questions are pursued in, for example, large-scale microarray experiments \citep{dudoit2003multiple}, brain imaging and fMRI analysis \citep{nichols2003controlling}, and numerous other applications.

A common theme in such applications is that the error terms are correlated, and that the signal vectors are believed to be sparse: the number of non-zero components in $\mu$ is small compared to the number of test performed.
Under such dependence and  sparsity assumptions, it is natural to ask if and when one can reliably {(1)} detect the signals, and {(2)} recover the support set $S$.
% Further, one would like to characterize the statistical procedures that achieve efficient detection and support recovery, whenever such goals become possible to achieve.
In this paper, we focus on the support recovery problem, and seek minimal conditions under which the support set can be consistently estimated.

\subsection{Set-up} \label{subsec:set-up}

We now describe the models and assumptions that we are going to work with for the rest of the paper.

\subsubsection{Assumptions on the error distributions}

We assume that the errors come from a triangular array
\begin{equation}
\label{eq:error-array}
{\cal E} = \left\{ (\epsilon_p(j))_{j=1}^p,\ p=1,2,\dots\right\},
\end{equation}
such that the $\epsilon_p(j)$'s have common distribution $F$.

We shall consider light-tailed error distributions with {\em rapidly varying} tails (see e.g.,\ Definition \ref{def:rapid-variation} below). 
To be concrete and better convey the main ideas, we will focus on the class of asymptotically generalized Gaussian laws (see Definition \ref{def:AGG}), which is still a fairly general class of models commonly used in the literature on high-dimensional testing \citep{cai2007estimation, arias2017distribution}.
Extensions to other models are deferred to Appendix \ref{sec:other-boundaries}.

\begin{definition} \label{def:AGG}
A distribution $F$ is called asymptotic generalized Gaussian with parameter $\nu>0$ (denoted $\text{AGG}(\nu)$) if
\begin{enumerate}
    \item $F(x)\in(0,1)$ for all $x\in\R$, and \smallskip
    \item $\log{\overline{F}(x)} \sim -\frac{1}{\nu}x^\nu$ and $\log{F(-x)} \sim -\frac{1}{\nu}(-x)^\nu,$ \label{eq:AGG}
\end{enumerate}
where $\overline{F}(x) = 1 - F(x)$ is the survival function, and $a(x)\sim b(x)$ is taken to mean $\lim_{x\to\infty} a(x)/b(x) = 1$.
\end{definition}

The AGG models include, for example, the Gaussian distribution ($\nu = 2$), and the Laplace distribution ($\nu = 1$) as special cases. 
Since the requirement is only placed on the tail behavior, this class encompasses a large variety of light-tailed models.

On the other hand, the AGG model are themselves special cases of a more general class of tail models as we will see in Example \ref{exmp:AGG}.
For simplicity of exposition, however, we shall focus on the $\text{AGG}(\nu)$ distributions, where the quantiles have explicit expressions.
\begin{proposition} \label{prop:quantile}
The $(1/p)$-th upper quantile of $\text{AGG}(\nu)$ is
\begin{equation} \label{eq:AGG-quantiles}
    u_{p} := F^\leftarrow(1-1/p) \sim \left(\nu\log{p}\right)^{1/\nu},
\end{equation}
where $F^\leftarrow(q) = \inf_x\{x:F(x)\ge q\},\ q\in (0,1)$.
\end{proposition}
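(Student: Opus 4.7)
The plan is a direct asymptotic inversion of the tail condition defining $\text{AGG}(\nu)$, using a standard sandwich argument.

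First, I would observe that the assumption $\log\overline{F}(x)\sim -x^\nu/\nu$ forces $\overline{F}(x)\to 0$ as $x\to\infty$, so together with $F(x)\in(0,1)$ for all $x$, the quantile $u_p = F^\leftarrow(1-1/p)$ is finite for every $p$ and $u_p\to\infty$ as $p\to\infty$. This is what allows us to invoke the tail asymptotic at $x=u_p$.

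Next, the main step is a sandwich. Fix $\varepsilon\in(0,1)$ and set $v_p^{\pm} = (1\pm\varepsilon)(\nu\log p)^{1/\nu}$. Plugging $v_p^{\pm}$ into the tail asymptotic gives
\begin{equation*}
\log \overline{F}(v_p^{\pm}) \sim -\frac{1}{\nu}(v_p^{\pm})^\nu = -(1\pm\varepsilon)^\nu \log p,
\end{equation*}
hence $\overline{F}(v_p^+) = p^{-(1+\varepsilon)^\nu + o(1)} = o(1/p)$ and $\overline{F}(v_p^-) = p^{-(1-\varepsilon)^\nu + o(1)}$, which dominates $1/p$ since $(1-\varepsilon)^\nu<1$. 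Consequently, for all $p$ sufficiently large, $\overline{F}(v_p^+) < 1/p < \overline{F}(v_p^-)$. By the definition of the generalized inverse $F^\leftarrow$ and monotonicity of $F$, this double inequality is equivalent to $v_p^- \le u_p \le v_p^+$, i.e.
\begin{equation*}
(1-\varepsilon) \le \frac{u_p}{(\nu\log p)^{1/\nu}} \le (1+\varepsilon)
\end{equation*}
eventually. Since $\varepsilon>0$ was arbitrary, this establishes $u_p \sim (\nu\log p)^{1/\nu}$.

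No step should be a real obstacle; the only minor care needed is in handling a possibly discontinuous $F$ when translating the inequality between $\overline{F}(v_p^{\pm})$ and $1/p$ into an inequality between $u_p$ and $v_p^{\pm}$. This is standard for generalized inverses (e.g.\ $F^\leftarrow(q)\le x \iff q\le F(x)$), and replacing $\varepsilon$ by a slightly larger constant absorbs any boundary issues.
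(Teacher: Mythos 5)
Your proof is correct and takes essentially the same sandwich approach as the paper: plug candidate bounding sequences into the tail asymptotic, compare $\overline{F}$ at these points with $1/p$, and translate back to $u_p$ via the generalized inverse. The only cosmetic difference is the parameterization of the bounding sequences — the paper scales inside the power, $x_{l,u}=\left((1\mp\epsilon)^{-1}\nu\log p\right)^{1/\nu}$, while you scale outside, $v_p^{\pm}=(1\pm\varepsilon)(\nu\log p)^{1/\nu}$ — which is mathematically equivalent.
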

The proof of Proposition \ref{prop:quantile} can be found in Appendix \ref{sec:AGG}.
% We remark here that the family of density is log-concave for $\nu \ge 1$, therefore the oracle procedure is the thresholding procedure (See Section \ref{subsec:oracle}).

Note that the errors are only assumed to have common marginal distributions, and may have potentially arbitrary dependence.
We will study the role of dependence in support recovery problems in Section \ref{sec:URS}. 

\subsubsection{Assumptions on the signals}
We assume in model \eqref{eq:model} that $\mu = \left(\mu(j)\right)_{j=1}^p$ is a sparse signal vector with non-zero entries only at the support of the signal $S_p\subseteq \{1,\ldots,p\}$. 
We denote the size of the support set as $s = |S_p|$, and assume that the non-zero entries of $\mu$ are positive and take values in the interval $\left[\underline{\Delta},\overline{\Delta}\right]\subset(0,\infty)$.

Following \citep{ingster1998minimax, donoho2004higher, cai2007estimation, haupt2011distilled, arias2017distribution}, we parametrize the signal sparsity as
\begin{equation} \label{eq:sparsity-parametrized}
    s = s(p) = \lfloor p^{1-\beta} \rfloor, % \quad \text{with } 0 < \beta \le 1 \; \text{(fixed)},
\end{equation}
with $0 < \beta \le 1$ fixed. We also parametrize the signal sizes $\underline{\Delta}$ and $\overline{\Delta}$ as
\begin{equation} \label{eq:signal-size-parametrized}
    \underline{\Delta} = \underline{\Delta}(p) = (\nu \underline{r} \log{p})^{1/\nu} \quad \text{and} \quad
    \overline{\Delta} = \overline{\Delta}(p)  = (\nu \overline{r} \log{p})^{1/\nu},
\end{equation}
with parameters $0 < \underline{r} \le \overline{r}$.

\subsection{Thresholding procedures} \label{subsec:thresholding-procedures}

We review four procedures that will be analyzed in this paper.
All of them fall under the class of thresholding procedures, which we define as follows.
\begin{definition}[Thresholding procedures]
A thresholding procedure for estimating the support 
$S_p:=\{j\, :\, \mu(j)\neq0\}$ is one that takes on the form
\begin{equation} \label{eq:thresholding-procedure}
    \widehat{S}_p = \left\{j\, :\, x(j) > t_p(x)\right\}.
\end{equation}
We note here that the threshold $t_p(x)$ may depend on the data $x$.
\end{definition}
A well-known (deterministic) thresholding procedure is Bonferroni's procedure.
\begin{definition}[Bonferroni's procedure]
Suppose the errors $\epsilon(j)$'s have a common marginal distribution $F$, Bonferroni's procedure with family-wise error rate (FWER) at most $\alpha$ is the thresholding procedure that uses the threshold
\begin{equation} \label{eq:Bonferroni-procedure}
    t_p = F^{\leftarrow}(1 - \alpha/p).
\end{equation}
%where  $F^{\leftarrow}(u)=\inf{\left\{x:F(x)\ge u\right\}}$ is the generalized inverse function.
\end{definition}
% It is easy to see that the family-wise error rate (FWER) is controlled at level $\alpha$ by applying the union bound, regardless of the error-dependence structure (see e.g.\ Relation \eqref{eq:Bonferroni-FWER-control}, below).
A closely related procedure is Sid\'ak's procedure \citep{vsidak1967rectangular}
which is a more aggressive (and also deterministic) thresholding procedure that uses the 
threshold
\begin{equation} \label{eq:Sidak-procedure}
    t_p = F^{\leftarrow}((1 - \alpha)^{1/p}).
\end{equation}
% can be shown to control FWER in the case independent errors.

Another procedure, which is strictly more powerful than Bonferroni's, is Holm's procedure \citep{holm1979simple}.
On observing the data $x$, its coordinates can be ordered from largest to smallest
$x(j_1) \ge x(j_2)  \ge \ldots \ge x(j_p)$,
where $(j_1, \ldots, j_p)$ is a permutation of $\{1, \ldots, p\}$. 
\begin{definition}[Holm's procedure]
Let $k$ be the largest index such that
$$
\overline{F}(x(j_i)) \le \alpha / (p-i+1),\quad \text{for all}\;i\le k.
$$
Holm's procedure with FWER controlled at $\alpha$ is the thresholding procedure that uses
\begin{equation} \label{eq:Holm-procedure}
    t_p(x) = x(j_{k}),
\end{equation}
\end{definition}
In contrast to the Bonferroni procedure, Holm's procedure is data-dependent.
% It can be shown that Holm's procedure also controls FWER at $\alpha$ level, regardless of dependence in the data.
A closely related, more aggressive (data-dependent) thresholding procedure is Hochberg's procedure \citep{hochberg1988sharper},
%\begin{definition}[Hochberg's procedure]
%Hochberg's procedure 
which replaces the index $k$ in Holm's with the largest index $i$ such that
$$
\overline{F}(x(j_i)) \le \alpha / (p-i+1).
$$
%where  $\overline{F}(x)=1-F(x)$ is the survival function.
%\end{definition}

We will analyze the performance of these thresholding procedures in Section \ref{sec:sufficient}.
The (sub)optimality of general data-dependent thresholding procedures will be established in Section \ref{sec:optimality}.
We now return to the discussion of support recovery.

\subsection{The Hamming-loss approach and its deficiencies}
Recall our goal is to establish minimal conditions under which the support set can be consistently estimated, i.e.,
\begin{equation} \label{eq:goal}
    \P[\widehat{S}_p = S_p] \longrightarrow 1 \quad \text{as } \; p\to\infty, 
\end{equation}
where $\widehat{S}_p$ is an estimate of the true set of signal support $S_p$.
Previously, the probability of exact support recovery has been studied via the Hamming loss, defined as the number of mismatches between the estimated and true support set,
\begin{equation} \label{eq:Hamming-loss}
    H(\widehat S, S)
    = \left|\widehat S \setminus S\right| + \left|S \setminus \widehat S\right|
    = \sum\limits_{j=1}^p\left|\mathbbm{1}_{\widehat{S}}(j)- \mathbbm{1}_{S}(j)\right|.
\end{equation}
As pointed out in, e.g. \citet{butucea2018variable}, there is a natural lower bound for the probability of support recovery by the expected Hamming loss,
\begin{equation} \label{eq:Hamming-loss-lower-bound}
    \mathbb{P}[\widehat S = S] 
    \ge 1 - \mathbb{E}[H(\widehat S, S)]
    = 1 - \sum\limits_{j=1}^p\E\left|\mathbbm{1}_{\widehat{S}}(j)- \mathbbm{1}_{S}(j)\right|.
\end{equation}
However, vanishing Hamming loss is only sufficient, not necessary for support recovery \eqref{eq:goal}.
A key observation in Relation \eqref{eq:Hamming-loss-lower-bound} is that the expected Hamming loss decouples into $p$ terms, and dependence of the estimates $\mathbbm{1}_{\widehat{S}}(j)$ among the $p$ locations no longer plays a role in the sum.
Therefore, studying support recovery problems via the expected Hamming loss is not very informative especially under severe dependence, as the bound \eqref{eq:Hamming-loss-lower-bound} may become \emph{very} loose.

Despite this deficiency, some important results have been derived for the independent or near-independent case.
For example, \citet{genovese2012comparison} and \citet{ji2012ups} studied the problem of support recovery in linear models under the Hamming loss.
Under the parametrization in \eqref{eq:sparsity-parametrized} and \eqref{eq:signal-size-parametrized}, a minimax-type phase transition result was established. In \citep{ji2012ups}, it was shown that the  Hamming loss diverges to $+\infty$ when signal sizes $r$ fall below the threshold
\begin{equation} \label{eq:strong-classification-boundary-Gaussian}
    g(\beta) = (1 + (1 - \beta)^{1/2})^2,
\end{equation}
for any method of support estimation.
Conversely, under orthogonal, or near-orthogonal random designs, if $r>g(\beta)$, their proposed method achieves vanishing Hamming loss.

Very recently, \citet{butucea2018variable} studied both asymptotics and non-asymptotics of the support recovery problem in the additive noise model \eqref{eq:model} with Gaussian errors under the Hamming loss.
It was again shown that the boundary \eqref{eq:strong-classification-boundary-Gaussian} exists in a minimax sense.
That is, when errors are \emph{independent}, the Hamming loss cannot be made to vanish if signal sizes fall below the boundary \eqref{eq:strong-classification-boundary-Gaussian}. 
Conversely, if $r>g(\beta)$, the Hamming loss can be made to vanish with a particular thresholding procedure.

% See also, Example \ref{exmp:counter-example} and Figure \ref{fig:phase-simulated-very-dependent} (bottom-right panel).

So far in the literature, the role of dependence, and that of the distributional assumptions in the exact support recovery problem remain largely unexplored.  
This is arguably a consequence of the inherent limitation of the Hamming-loss approach. 
The minimax formulation, although elegant, focuses on the independent cases and obviates a fuller exploration of the phenomenon under various dependence conditions.
For practitioners, a minimax statement in the Gaussian case seems to offer little guidance.
Indeed, in applications, independence is an exception rather than the rule; Gaussianity of the errors may also be unnecessarily restrictive.

These considerations motivate us to study the support recovery problem \eqref{eq:goal} directly, under general distributional and dependence assumptions.
In particular, it is of interest to see if the phase transition continues to hold when we shift our focus from the Hamming loss to the exact support recovery problem \eqref{eq:goal}.
It is also important to know how dependence may affect the problem of support recovery, and when boundaries such as $g$ in \eqref{eq:strong-classification-boundary-Gaussian} remain tight under dependent, and possibly non-Gaussian errors.
We provide answers to these questions in this paper. 
Our contributions are summarized in Subsections \ref{subsec:role-of-dependence}, \ref{subsec:role-of-URS}, and \ref{subsec:role-of-thresholding} below.

\subsection{Role of dependence and point-wise phase transitions}
\label{subsec:role-of-dependence}
Consider the function
\begin{equation} \label{eq:strong-classification-boundary}
    g(\beta) = (1 + (1 - \beta)^{1/\nu})^\nu,
\end{equation}
which we refer to as the {\em strong classification boundary}.
In Theorem \ref{thm:sufficient} we show that, under the general distributional assumptions in Section \ref{subsec:set-up}, if the signal size is above the boundary (i.e., $\underline{r}> g(\beta)$), procedures described in Section \ref{subsec:thresholding-procedures} with appropriately calibrated levels achieve perfect support recovery, that is,
\begin{equation} \label{eq:exact-recovery}
    \mathbb{P}\left[\widehat{S}_p=S_p\right]\longrightarrow 1,\quad \mbox{ as }p\to \infty.
\end{equation}

Conversely, we show in Theorem \ref{thm:necessary}, that for a surprisingly large class of dependence structures characterized by the concept of \emph{uniform relative stability} (URS, see Definition \ref{def:URS}), when the signal size is below the boundary  (i.e., $r<g(\beta)$), no thresholding procedure can achieve the asymptotically perfect support recovery \eqref{eq:exact-recovery}. In fact,
\begin{equation} \label{eq:exact-recovery-failure}
    \mathbb{P}\left[\widehat{S}_p=S_p\right]\longrightarrow 0,\quad \mbox{ as }p\to \infty,
\end{equation}
for all $\widehat{S}_p$ in the form of \eqref{eq:thresholding-procedure}.

Complementing results in \citet{butucea2018variable}, these two results show that the thresholding procedures obey a phase transition phenomenon in a strong, \emph{point-wise} sense over the class of URS dependence structures, and over the class of AGG$(\nu),\ \nu>0$ error distributions. 

The strong classification boundary $g$ characterizes a phase-transition phenomenon similar to that of the signal detection and approximate support recovery. 
A preview of this result is presented in Figure \ref{fig:phase}, which shows the boundaries for signal detection, approximate support recovery, and exact support recovery, in the special case of Gaussian and Laplace errors, respectively. 

\subsection{Uniform relative stability and concentration of maxima}
\label{subsec:role-of-URS}
The key probabilistic concept behind our characterization of the dependence structure under which \eqref{eq:exact-recovery-failure} takes place is a certain {\em concentration of maxima} phenomenon, known as {\em relative stability} (see Section \ref{sec:URS}).
We introduce and study an extension of this concept referred to as uniform relative stability (URS). 
Broadly speaking, the strong classification boundary phenomenon holds for dependent light-tailed errors, provided that they are uniformly relatively stable. 

In the case of dependent Gaussian errors, we establish in Theorem \ref{thm:Gaussian-weak-dependence} a complete characterization of URS in terms of a simple condition on the covariance structure, using the Sudakov-Fernique bounds and a curious application of Ramsey theory.
These results may be of independent interest, since they characterize Gaussian triangular arrays whose maxima concentrate at the same rate as in the case of independence.

\begin{figure}
      \centering
      \includegraphics[width=0.45\textwidth]{./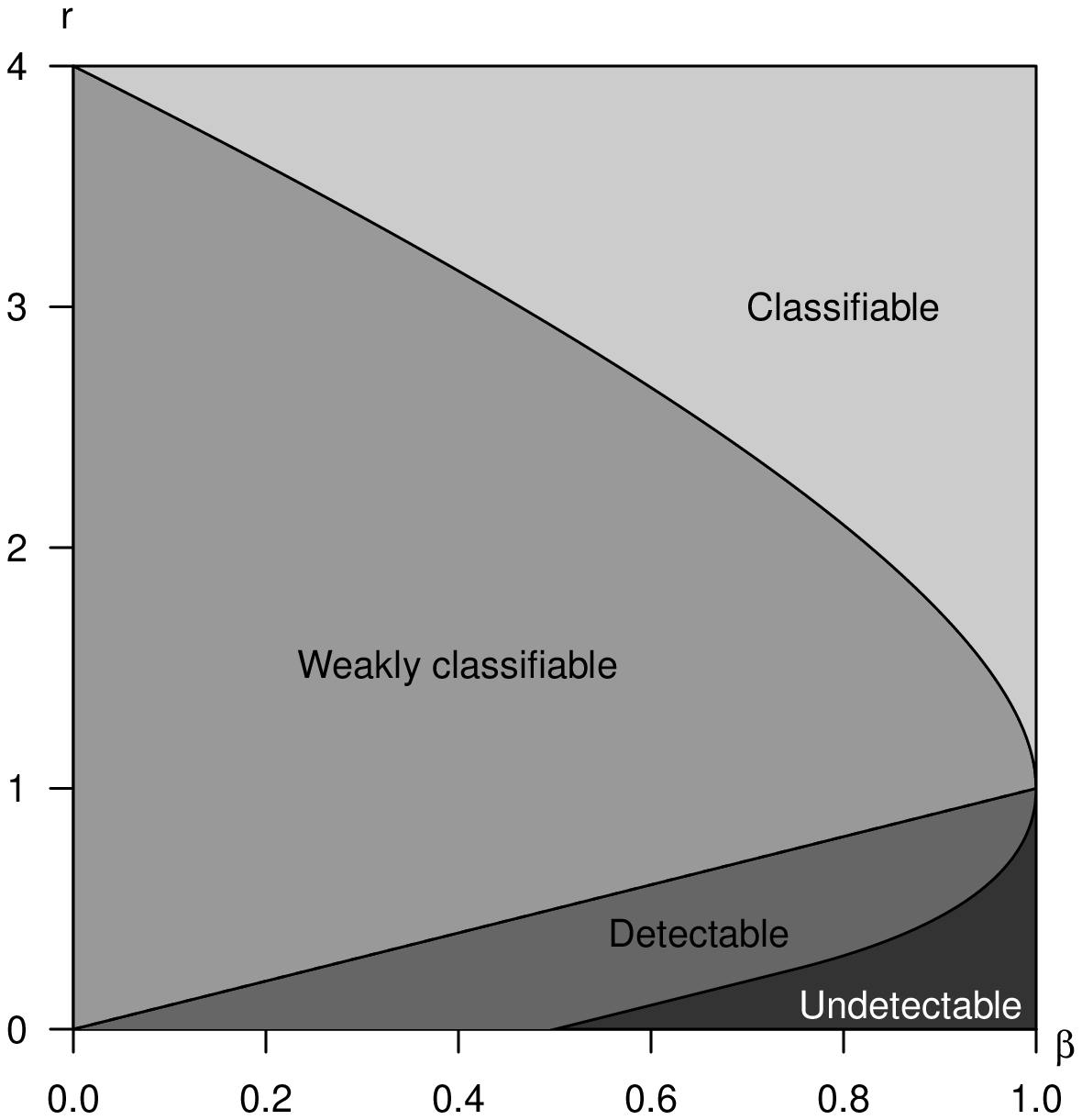}
      \includegraphics[width=0.45\textwidth]{./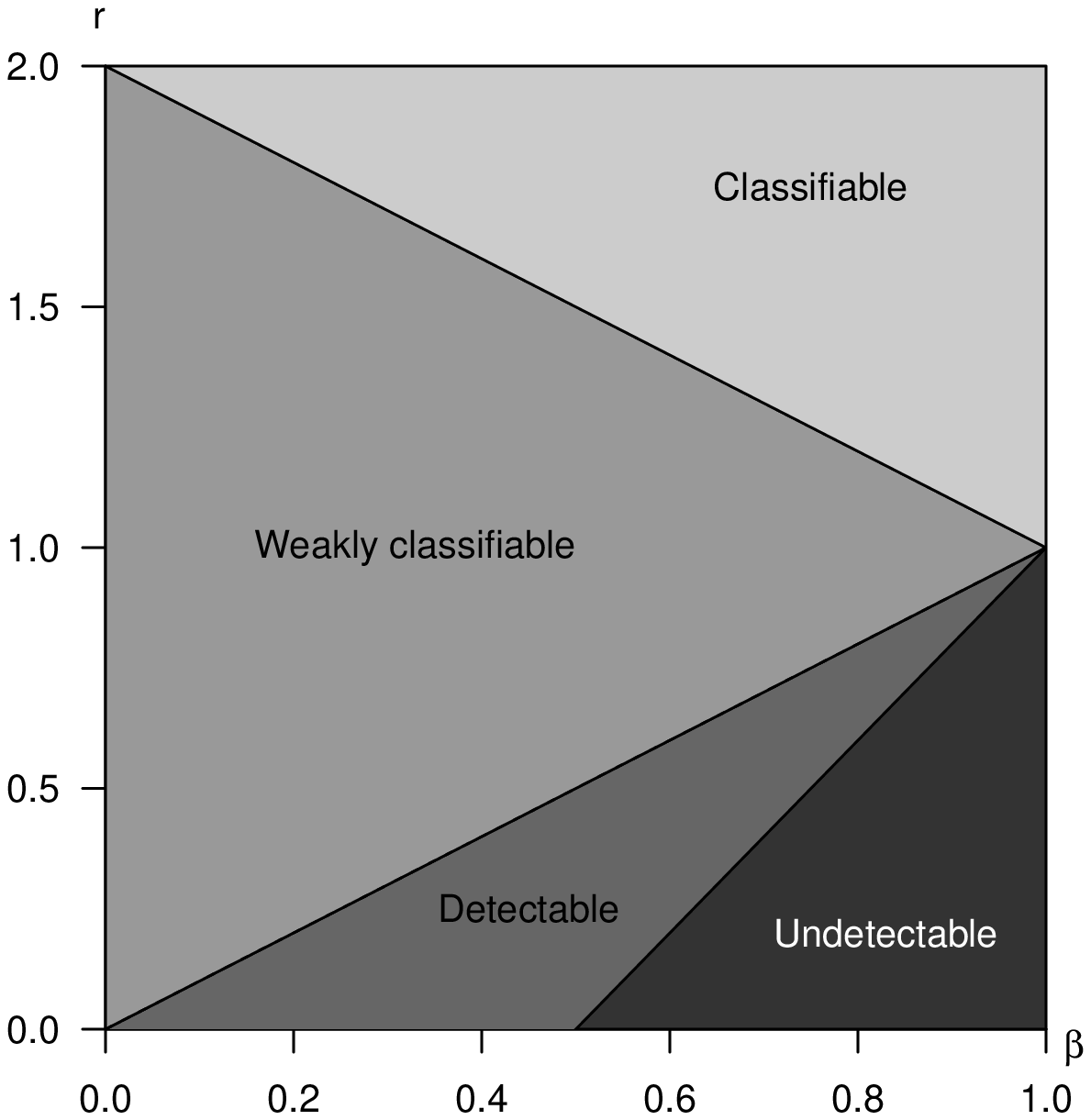}
      \caption{The phase diagrams of the detection, weak classification, and strong classification boundaries against sparse alternatives under Gaussian (left) and Laplace distributed (right) errors. The parameters $\beta$ and $r$ parametrize the signal sparsity and signal size, respectively.
      %The signal detection problem can be answered perfectly (asymptotically) inside the \emph{Detectable} region;
      %False discovery proportion and non-discovery proportion can be made to vanish in the \emph{Weakly classifiable} region. 
      We study in this paper the strong classification boundary, above which the support recovery can be achieved \emph{exactly} in the \emph{Classifiable} region. In a large class of dependence structures characterized by URS, when signal sizes fall below the strong classification boundary, no thresholding procedure succeeds in the exact support recovery problem.
      For the detection and weak classification boundaries, see, e.g., \citep{haupt2011distilled, arias2017distribution, ingster1998minimax, donoho2004higher}, and Sec \ref{sec:sufficient}.}
      \label{fig:phase}
\end{figure}

\subsection{Role of thresholding procedures and minimax optimality}
\label{subsec:role-of-thresholding}
We show in Section \ref{sec:optimality} that data thresholding procedures are optimal when the errors are independent and identically distributed (i.i.d.) with log-concave density. 
In this case, no estimator can achieve perfect support recovery when the signal is below the strong classification boundary \eqref{eq:strong-classification-boundary}. 
Consequently, in the case of AGG errors with $\nu\ge 1$, the strong 
classification boundary is shown to hold in the minimax 
sense for \emph{all} procedures. This is formalized in Theorem \ref{thm:necessary-strengthened}. 

We show that thresholding procedures (including data-dependent ones) are not optimal in general in the support recovery problem when the errors have heavy, regularly-varying tails. 
In this case, we also demonstrate the absence of a phase-transition phenomenon in exact support recovery by thresholding, in Theorem \ref{thm:heavy-tails} in the appendix.

% In this sense, for $\nu\ge 1$ our results are rather complete.  We note here that our focus is not on minimax analysis of the support recovery problem, where a large class of distributional and dependence structures are considered.  
% In some sense, the worst case scenario (see also \cite{butucea2018variable}) is when the errors are independent.  
% In contrast, we establish the phase-transition phenomenon, for fixed but very general dependence conditions characterized by the URS property.

%We show that when the error-distribution $F$ is log-concave, then the thresholding procedures are optimal (for independent errors).  Therefore, in this regime the {\em strong classification boundary} $g$ is universal.  
%That is, if $r<g(\beta)$, no estimator can achieve perfect support recovery as $p\to\infty$. 
% \fbox{maybe skip the next sentence} 
% In fact, going beyond the class of rapidly varying distributions, (e.g., for heavy Pareto-type tails) the support recovery problem is fundamentally different and there may no longer be a phase-transition phenomenon.

\subsection{Contents of this paper}
The results summarized in Subsections \ref{subsec:role-of-dependence}, \ref{subsec:role-of-URS}, and \ref{subsec:role-of-thresholding} are detailed in Sections \ref{sec:sufficient}, \ref{sec:URS}, \ref{sec:necessary}, and \ref{sec:optimality}. Numerical illustrations are given in Section \ref{sec:numerical}. 
Appendices \ref{sec:proofs} and \ref{sec:AGG} contain technical proofs and auxiliary results.
Generalizations of the phase transition phenomena to other classes of error distributions can be found in Appendix \ref{sec:other-boundaries}.
Performance of thresholding procedures under heavy, regularly varying tails is analyzed in Appendix \ref{sec:heavy-tailed}.

\section{Sufficient conditions for exact support recovery}
\label{sec:sufficient}

Our first result shows that, if $F\in \text{AGG}(\nu)$ with $\nu>0$, then regardless of the error dependence structure, (asymptotic) perfect support recovery is achieved by applying Bonferroni's procedure with appropriately calibrated FWER, as long as the signal sizes are above the boundary in \eqref{eq:strong-classification-boundary}.

\begin{theorem} \label{thm:sufficient}
Suppose $F\in \text{AGG}(\nu)$ with $\nu>0$;
let the signal $\mu$ have $|S_p| = \lfloor p^{(1-\beta)} \rfloor$ non-zero entries where $\beta\in(0,1]$, where the magnitudes of non-zero signal entries are at least $\underline{\Delta} = \left(\nu\underline{r}\log p\right)^{1/\nu}$.
Let also $\widehat{S}_p$ be the Bonferroni's procedure (defined in \eqref{eq:Bonferroni-procedure}) with vanishing FWER $\alpha = \alpha(p) \to 0$, such that %\fbox{slower than any polynomial}
$\alpha p^\delta\to \infty$ for every $\delta>0$.
If
\begin{equation} \label{eq:signal-above-boundary}
    \underline{r} > g(\beta) = (1 + (1 - \beta)^{1/\nu})^\nu,
\end{equation}
then we have
\begin{equation} \label{eq:exact-supporot-recovery}
    \lim_{p\to\infty}\mathbb P[\widehat{S}_p = S_p] = 1.
\end{equation}
\end{theorem}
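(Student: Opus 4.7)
The plan is to control the complement of $\{\widehat S_p = S_p\}$ by separately bounding false positives and false negatives. Since $\widehat S_p = \{j : x(j) > t_p\}$ with $t_p = F^{\leftarrow}(1-\alpha/p)$, we have the inclusion
\[
\{\widehat S_p \neq S_p\} \;\subseteq\; \bigcup_{j\notin S_p}\{\epsilon_p(j) > t_p\} \;\cup\; \bigcup_{j\in S_p}\{\epsilon_p(j) \le t_p - \mu(j)\},
\]
and a union bound on each piece will suffice.

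\textbf{False positives.} Using identical marginals and the definition of $t_p$, I would obtain
\[
\P\Big[\max_{j\notin S_p}\epsilon_p(j) > t_p\Big] \;\le\; (p-s)\,\overline F(t_p) \;\le\; \alpha \;\to\; 0.
\]
Crucially, this is a plain Bonferroni bound, so it is dependence-free---matching the ``arbitrary dependence'' phrasing of the theorem's conclusion.

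\textbf{False negatives.} This is where the boundary $g(\beta)$ enters. First, I would combine Proposition~\ref{prop:quantile} with the growth condition $\alpha p^\delta\to\infty$ for every $\delta>0$ (which forces $|\log\alpha|=o(\log p)$) to conclude $t_p \sim (\nu\log p)^{1/\nu}$. Since $\underline{\Delta}=\underline{r}^{1/\nu}(\nu\log p)^{1/\nu}$ and $\underline{r}>g(\beta)\ge 1$, this yields $\underline{\Delta}-t_p\sim(\underline{r}^{1/\nu}-1)(\nu\log p)^{1/\nu}\to+\infty$. The left-tail half of the AGG assumption then gives
\[
\log F(t_p-\underline{\Delta}) \;\sim\; -\tfrac{1}{\nu}(\underline{\Delta}-t_p)^\nu \;\sim\; -(\underline{r}^{1/\nu}-1)^\nu\log p,
\]
so a union bound over the $s\le p^{1-\beta}$ support indices produces
\[
\P\Big[\min_{j\in S_p}\epsilon_p(j)\le t_p-\underline{\Delta}\Big] \;\le\; s\,F(t_p-\underline{\Delta}) \;\le\; p^{\,1-\beta-(\underline{r}^{1/\nu}-1)^\nu+o(1)}.
\]
The exponent is strictly negative by the algebraic identity $\underline{r}>g(\beta) \iff (\underline{r}^{1/\nu}-1)^\nu>1-\beta$, so this probability vanishes.

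The substantive content is the exponent-matching in the last display together with the derivation of $t_p \sim (\nu\log p)^{1/\nu}$; everything else is union-bounding. The main technical nuisance is converting the AGG $\sim$ into rigorous inequalities: I would fix an arbitrarily small $\eta>0$, replace each $\sim$ by the corresponding one-sided bound valid for all $p$ past some $p_0(\eta)$ (e.g.\ $\log F(-y)\le -(1-\eta)y^\nu/\nu$ for large $y$), carry the $\eta$'s through, and only then let $\eta\downarrow 0$ after $p\to\infty$. No dependence structure is invoked anywhere, matching the theorem's unrestricted-dependence conclusion.
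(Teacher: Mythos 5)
Your proof is correct, and it takes a genuinely if mildly different route from the paper's. Where the paper reduces the false-negative part to the concentration-of-maxima statement $\max_{j\in S}(-\epsilon(j))/u_{|S|}\stackrel{\P}{\le} 1+o(1)$ and then invokes Proposition~\ref{prop:rapid-varying-tails} (rapid variation), you instead run a direct union bound over the $s$ support indices and plug in the explicit AGG left-tail estimate $\log F(-y)\sim -y^\nu/\nu$, arriving at the exponent $1-\beta-(\underline r^{1/\nu}-1)^\nu+o(1)$ and finishing with the clean algebraic equivalence $\underline r>g(\beta)\iff(\underline r^{1/\nu}-1)^\nu>1-\beta$. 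Both arguments are, at bottom, union bounds plus tail asymptotics; the paper's version abstracts the union bound into the relative-stability lemma (which it needs anyway for the converse direction and for more general rapidly varying tails), whereas yours is more self-contained and AGG-specific. One small dividend of your version: the inequality $(\underline r^{1/\nu}-1)^\nu>1-\beta$ degenerates gracefully at $\beta=1$ (exponent $>0$ vs.\ $1-\beta=0$), so you handle $\beta=1$ uniformly, while the paper has to treat $\beta=1$ as a separate case because its reduction to $\max(-\epsilon(j))/u_{|S_p|}$ fails when $|S_p|$ does not grow. Your note about replacing each $\sim$ by two-sided inequalities valid for $p\ge p_0(\eta)$ and letting $\eta\downarrow 0$ last is exactly the bookkeeping needed to make the displays rigorous.
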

\begin{corollary}[Classes of procedures attaining the boundary]
\label{cor:FWER-controlling_procedures}  Relation \eqref{eq:exact-supporot-recovery} holds for any FWER-controlling procedure that is strictly more powerful than Bonferroni's procedure. 
This includes Holm's procedure \citep*{holm1979simple}, and in the case of independent errors, Hochberg's procedure \citep*{hochberg1988sharper}, and the {\v{S}}id{\'a}k procedure \citep*{vsidak1967rectangular}.
\end{corollary}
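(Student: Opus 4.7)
The plan is to reduce the corollary to Theorem \ref{thm:sufficient} by isolating two structural features that every procedure in the list shares with Bonferroni's: (i) rejection-set monotonicity, i.e., $\widehat{S}_p \supseteq \widehat{S}_p^{\mathrm{B}}$ almost surely, where $\widehat{S}_p^{\mathrm{B}}$ denotes Bonferroni's rejection set at the same FWER level $\alpha = \alpha(p)$, and (ii) FWER control at level $\alpha$, meaning $\P[\widehat{S}_p \setminus S_p \neq \emptyset] \le \alpha$. With $\alpha(p) \to 0$ and $\alpha(p)\, p^\delta \to \infty$ for every $\delta > 0$ as in Theorem \ref{thm:sufficient}, I would split the target event as
\[
\{\widehat{S}_p = S_p\} = \{\widehat{S}_p \supseteq S_p\} \cap \{\widehat{S}_p \subseteq S_p\}.
\]
Feature (i) together with the trivial inclusion $\{\widehat{S}_p^{\mathrm{B}} = S_p\} \subseteq \{\widehat{S}_p \supseteq S_p\}$ and Theorem \ref{thm:sufficient} gives $\P[\widehat{S}_p \supseteq S_p] \to 1$. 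Feature (ii) directly yields $\P[\widehat{S}_p \subseteq S_p] \ge 1 - \alpha \to 1$. Combining the two establishes \eqref{eq:exact-supporot-recovery}.

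The remaining work is bookkeeping verification of (i) and (ii) for the specific procedures. For Holm's step-down scheme, the first rejection criterion $\overline{F}(x(j_1)) \le \alpha/p$ coincides with Bonferroni's, while subsequent criteria $\alpha/(p-i+1)$ are strictly more permissive, so (i) holds; Holm's FWER control at level $\alpha$ is moreover distribution-free \citep{holm1979simple} and hence valid under the arbitrary error dependence permitted in Theorem \ref{thm:sufficient}. For Hochberg's step-up scheme, any index $i$ at which Bonferroni rejects already satisfies $\overline{F}(x(j_i)) \le \alpha/p \le \alpha/(p-i+1)$, forcing the Hochberg cutoff to include that index, whence (i). For {\v{S}}id{\'a}k, the elementary inequality $(1-\alpha)^{1/p} \le 1 - \alpha/p$ (valid for $p \ge 1$, $\alpha \in [0,1]$) gives $F^{\leftarrow}((1-\alpha)^{1/p}) \le F^{\leftarrow}(1-\alpha/p)$, so {\v{S}}id{\'a}k's threshold is no larger than Bonferroni's and (i) again holds. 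Under independence, both Hochberg's and {\v{S}}id{\'a}k's procedures control FWER at level $\alpha$, which is why the corollary restricts these two to the independent case.

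There is no deep obstacle; the only subtle point is recognizing that exact support recovery decomposes cleanly into a \emph{no false negatives} part, inherited from Bonferroni via monotonicity, and a \emph{no false positives} part, controlled by the vanishing FWER. The restriction to independent errors for Hochberg and {\v{S}}id{\'a}k is essential because their FWER bounds can fail under general dependence; Holm's procedure, being distribution-free, requires no such restriction and is therefore the broadly applicable strengthening of Bonferroni's in this setting.
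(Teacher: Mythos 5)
Your proof is correct, and it follows the route the paper implicitly intends: the proof of Theorem \ref{thm:sufficient} already splits $\{\widehat S_p = S_p\}$ into the \emph{no false inclusions} event $\{\widehat S_p \subseteq S_p\}$ (handled by FWER control) and the \emph{no missed detections} event $\{\widehat S_p \supseteq S_p\}$ (handled by the signal-strength calculation), and the corollary is a direct transfer of this to any procedure that both controls FWER at the calibrated level $\alpha(p)$ and dominates Bonferroni's rejection set. The only thing worth noting is a slight economy: rather than passing through the full event $\{\widehat S_p^{\mathrm B} = S_p\}$, one could cite the second half of the proof of Theorem \ref{thm:sufficient} directly, which establishes $\P[\widehat S_p^{\mathrm B} \supseteq S_p]\to 1$ on its own; this avoids invoking the theorem's conclusion when only one of its two ingredients is needed. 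Your verification of dominance for Holm, Hochberg, and {\v S}id{\'a}k is correct (including the Bernoulli inequality $(1-\alpha)^{1/p}\le 1-\alpha/p$ for the {\v S}id{\'a}k threshold), and you correctly flag that only Holm's FWER bound survives arbitrary dependence, which is precisely why the corollary restricts Hochberg and {\v S}id{\'a}k to the independent case.
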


\begin{example} \label{exmp:FWER-controlling_procedures}
Under Gaussian errors, the particular choice of the thresholding at $t_p = \sqrt{2\log{p}}$ in \eqref{eq:Bonferroni-procedure} corresponds to a Bonferroni's procedure with FWER decreasing at a rate of $(\log{p})^{-1/2}$. By the Corollary \ref{cor:FWER-controlling_procedures}, Holm's procedure, and when the errors are independent, the {\v{S}}id{\'a}k, and Hochberg procedures with FWER controlled at $(\log{p})^{-1/2}$ all achieve perfect support recovery provided that $r>g(\beta)$.
\end{example}

The claims in Example \ref{exmp:FWER-controlling_procedures} are proved in Appendix \ref{subsec:proofs-examples}.
We now turn to the proof of Theorem \ref{thm:sufficient}.

\begin{proof}[Proof of Theorem \ref{thm:sufficient}]
Under the AGG model, it is easy to see from equation \eqref{eq:AGG-quantiles} that the thresholds in Bonferroni's procedure are 
\begin{equation}\label{e:AGG-threshold}
t_p = F^{\leftarrow}(1 - \alpha/p) = (\nu\log{(p/\alpha)})^{1/\nu}(1+o(1)).
\end{equation}
Define $\widehat{S}_p = \left\{j:x(j)>t_p\right\}$ as our estimator for the support set. 
Dependence on $p$ will be suppressed to simplify notations when such omissions do not lead to ambiguity.

The Bonferroni's procedure controls the FWER.  Indeed, we have
\begin{align}\label{eq:Bonferroni-FWER-control}
    \P\left[\widehat{S} \subseteq S\right] 
        &= 1 - \P\left[\max_{j\in S^c}x(j) > t_p\right] = 1 - \P\left[\max_{j\in S^c}\epsilon(j) > t_p\right]\nonumber \\
      % \ge 1 - \P\left[\max_{j\in\{1,\ldots,p\}}\epsilon(j) > t_p\right] \nonumber \\
        &\ge 1 - \sum_{j\in\{1,\ldots,p\}}\P\left[\epsilon(j)>t_p\right] \ge 1 - \alpha(p) \to 1,
\end{align}
where we used the union bound in the second inequality. This shows that the probability of no false inclusion converges to $1$.

On the other hand, for the probability of no missed detection, we have:
\begin{equation*}
    \P\left[\widehat{S} \supseteq S\right] 
        = \P\left[\min_{j\in S}x(j) > t_p\right]
        = \P\left[\min_{j\in S}x(j) - (\nu\underline{r}\log p)^{1/\nu} > t_p - (\nu\underline{r}\log p)^{1/\nu} \right].
\end{equation*}
Since the signal sizes are no smaller than $(\nu\underline{r}\log p)^{1/\nu}$, we have
\begin{equation*}
    x(j) - \left(\nu\underline{r}\log{p}\right)^{1/\nu} \ge \epsilon(j), \quad \text{for all }j\in S,
\end{equation*}
and hence we obtain
\begin{equation} \label{eq:sufficient-proof-eq1}
    \P\left[\widehat{S} \supseteq S\right] \ge 
    \P\left[\min_{j\in S}\epsilon(j) > (\nu\log{(p/\alpha)})^{1/\nu}(1+o(1)) - (\nu\underline{r}\log p)^{1/\nu} \right],
\end{equation}
where we plugged in the expression for $t_p$ in \eqref{e:AGG-threshold}.
Now, since the minimum signal size is bounded below by $\underline{r} > \left(1 + (1-\beta)^{1/\nu}\right)^\nu$, we have $\underline{r}^{1/\nu}-(1-\beta)^{1/\nu}>1$, and so we can pick a $\delta > 0$ such that 
\begin{equation} \label{eq:choice-of-delta}
    \delta < \left(\underline{r}^{1/\nu} - (1-\beta)^{1/\nu}\right)^\nu - 1.
\end{equation}
Since, by assumption, for all $\delta>0$, we have $p^{-\delta} = o\left(\alpha(p)\right)$, there is an
$M=M(\delta)$ such that $p/\alpha(p) < p^{1+\delta}$ for all $p\ge M$. Thus, from \eqref{eq:sufficient-proof-eq1}, we further conclude that for $p\ge M$, and $\beta\in(0,1)$,
\begin{align}
    \P\Big[\widehat{S} \supseteq S\Big]
      &\ge \P\Big[\min_{j\in S}\epsilon(j) > \left((1+\delta)\nu\log{p}\right)^{1/\nu}(1+o(1)) - (\nu\underline{r}\log p)^{1/\nu} \Big] \nonumber \\
      &= \P\Big[\max_{j\in S}\left(-\epsilon(j)\right) < \left(\underline{r}^{1/\nu} - (1+\delta)^{1/\nu}\right)(\nu\log{p})^{1/\nu}(1+o(1)) \Big]. \label{eq:sufficient-proof-eq2} \\
      &= \P\Big[\frac{\max_{j\in S}(-\epsilon(j))}{u_{|S|}} < \underbrace{\frac{\underline{r}^{1/\nu} - (1+\delta)^{1/\nu}}{(1-\beta)^{1/\nu}}\left(1+o(1)\right)}_{=:\text{A}}\Big], \label{eq:sufficient-proof-eq3}
\end{align}
where in the last line we used the parametrization $|S_p| = \lfloor p^{1-\beta}\rfloor$, and the expression for the quantiles \eqref{eq:AGG-quantiles} to obtain
\begin{equation} \label{eq:sufficient-proof-eq4}
    u_{|S_p|} = u_{\lfloor p^{1-\beta}\rfloor} \sim \left(\nu(1-\beta)\log{p}\right)^{1/\nu}.
\end{equation}
Observe that for the expression $\text{A}$ in the right-hand-side of \eqref{eq:sufficient-proof-eq3} we have $\text{A}\to c>1$ by our choice of $\delta$ in \eqref{eq:choice-of-delta}.
Since the $-\epsilon(j)$'s are also AGG, by Proposition \ref{prop:rapid-varying-tails} (to be stated next in Section \ref{sec:URS}), we conclude that the last probability in \eqref{eq:sufficient-proof-eq3}, and hence,
$\P[\widehat{S} \supseteq S]$ converges to 1. 

Notice that Relation \eqref{eq:sufficient-proof-eq4}, and hence \eqref{eq:sufficient-proof-eq3} is only valid for $\beta\in(0,1)$.
However, the case where $\beta=1$ is easily handled: 
since the right-hand-side of \eqref{eq:sufficient-proof-eq2} diverges to $+\infty$, while the left-hand-side is tight, the probability again converges to 1.
\end{proof}

Before further commenting on the implications of Theorem \ref{thm:sufficient}, we shall review some related work on sparse signal detection and approximate support recovery.

The problem of sparse signal detection was first studied by \citet*{ingster1998minimax} under independent Gaussian errors.
Specifically, under the parametrization \eqref{eq:sparsity-parametrized} and \eqref{eq:signal-size-parametrized} (with $\nu=2$), it was shown that the detection problem can be answered perfectly as $p\to\infty$, when signal size $r$ is above a threshold $f(\beta)$, where
\begin{equation} \label{eq:detection-boundary}
f(\beta) = \begin{cases}
\left(1 - \sqrt{1 - \beta}\right)^2 &,\ \beta\ge3/4\\
\beta - 0.5 &,\ 1/2<\beta<3/4.
\end{cases}
\end{equation}
Conversely, when $r<f(\beta)$, we can do no better than random guessing. 
Thus, the function $f(\beta)$ fully characterizes the so-called \emph{detection boundary} of the signal detection problem, and demonstrates a phase-transition phenomenon. 
\citet*{donoho2004higher} showed that the Higher Criticism statistic, originally proposed by Tukey, is a procedure that achieves the detection boundary asymptotically, without prior knowledge of the sparsity and the signal size.

In this context of approximate support recovery, the corresponding concept to type I error is the \emph{False Discovery Proportion} (FDP), defined as $\text{FDP} = |\widehat{S}\setminus S|\big/|\widehat{S}|$ (with \emph{False Discovery Rate} (FDR) being the expectation of FDP), 
and the counterpart of type II error in this context being the \emph{False Non-discovery Proportion} (FNP), defined $\text{NDP} = |S\setminus\widehat{S}|\big/|S|$.
\citet*{haupt2011distilled} showed that for signal size $r > \beta$, the sum of FDP and FNP can be made to vanish as $p\to\infty$; while if $r < \beta$, no thresholding procedure can succeed asymptotically. We shall call this boundary 
\begin{equation} \label{eq:weak-classification-boundary}
    h(\beta) = \beta
\end{equation}
the \emph{weak classification boundary}. Recently, \citet*{arias2017distribution} showed that the Benjamini-Hochberg procedure \citep*{benjamini1995controlling} and the Barber-Cand{\`e}s procedure \citep*{barber2015controlling} are practical procedures that achieve this weak classification boundary.
These procedures are special cases of \emph{thresholding procedures}.

An even more stringent notion of false discovery is the \emph{Family-Wise Error Rate} (FWER), defined as $1 - \mathbb P[\widehat{S} \subseteq S]$, which is probability of falsely reporting any signal not in the support set.
Observe that vanishing FDP and NDP does not imply $\P\left[\widehat{S} = S\right]\to 1$;
the latter, stronger, notion of set consistency requires that the FWER vanish as well.

\begin{remark}[Gap between FDR and FWER under sparsity] \label{rmk:gap-when-signal-sparse}
Although it is believed that FWER control is sometimes a requirement too stringent compared to, say, FDR control in support recovery problems, 
the fact that all three thresholds (detection, weak, and strong classification) involve the same scaling indicates that the difficulties of the three problems (signal detection, approximate, and exact support recovery) are comparable when signals are truly sparse.
This is illustrated with the next example.
\end{remark}

\begin{example}[Power analysis for variable selection] \label{exmp:gap-when-signal-sparse}
For Gaussian errors (AGG with $\nu = 2$), when $\beta = 3/4$, for signal detection the boundary \eqref{eq:detection-boundary} says that signals will have to be at least of magnitude $\sqrt{\log{p}/2}$, 
while approximate support recovery \eqref{eq:weak-classification-boundary} requires signal sizes of at least $\sqrt{3\log{p}/2}$, 
and exact support recovery \eqref{eq:strong-classification-boundary} calls for signal sizes of at least $\sqrt{9\log{p}/2}$. 

If $m$ (independent) observations $x_1,\ldots,x_m$ were made on the same set of $p$ locations, then by taking location-wise averages, $$\overline{x}_{m}(j) = \frac{1}{m}\sum_{i=1}^{m} x_i(j), \quad j=1,\ldots,p$$
we can reduce error standard deviation, and hence boost the signal-to-noise ratio by a factor of $\sqrt{m}$.
By the simple calculations above, if one were to perform a power analysis to determine the sample size needed to detect (sparse) signals of a certain magnitude, increasing the sample size by a factor of $3$ will enable approximate support recovery with FDR control; and in fact, exact support recovery with FWER control can be achieved by increasing the sample size by a factor of $9$.
\end{example}

\begin{remark}[Gap between FDR and FWER under dense signals]
If the signals are only \emph{approximately} sparse, i.e., having a few components above \eqref{eq:strong-classification-boundary-Gaussian} but many smaller components above \eqref{eq:weak-classification-boundary}, then FDR-controlling procedures will discover substantially larger proportion of signals than FWER-controlling procedures.

Indeed, as $\beta\to0$, the required signal size for approximate support recovery \eqref{eq:weak-classification-boundary} tends to 0, while the required signal size for exact support recovery \eqref{eq:strong-classification-boundary-Gaussian} tends to $2^\nu$ in AGG models.
While Example \ref{exmp:gap-when-signal-sparse} indicates that the exact support recovery is not much more stringent than approximate support recovery when signals are sparse, the gap between required signal sizes widens when signals are dense. 
\end{remark}

Finally, we emphasize that Theorem \ref{thm:sufficient} holds for errors with \emph{arbitrary} 
dependence structures. Intuitively, this is because the maxima of the errors grow at their 
fastest in the case of independence. Formally, the result stems from Proposition \ref{prop:rapid-varying-tails} below, which is valid under arbitrary dependence.
% Thus, the support recovery problem is hardest under independent errors.
The relationship between dependence and the behavior of maxima is discussed next in Section \ref{sec:URS}, where we will see that the phase-transition phenomenon is not limited to just the AGG models and independent errors; such phenomenon exists for all error models with rapidly varying tails, and under a surprisingly large class of dependent structures.
% On the other hand, converse of Theorem \ref{thm:sufficient} will have to stated with assumptions on dependence structures of the $\epsilon$'s, as we will see in Section \ref{sec:necessary}. 

\section{Uniform relative stability and dependent errors}
\label{sec:URS}

We study the asymptotic behavior of error maxima under dependence in this section.
We show in Proposition \ref{prop:rapid-varying-tails} below that the maxima of errors with rapidly varying tails can be bounded above using quantiles of their marginal distribution, regardless of their dependence structure. 
This result is a key step in the proof of Theorem \ref{thm:sufficient}.

On the other hand, a lower bound for the error maxima can be provided for a very general class of dependence structures, as we will see in Section \ref{subsec:URS}. 
Specifically, we characterized this class of dependence structure in the case of Gaussian errors with a transparent necessary and sufficient condition.
This will prepare us to state the converse of Theorem \ref{thm:sufficient}.

Some new results regarding the structure of correlation matrix of high-dimensional random variables may be of independent interest, and are collected in Section \ref{subsec:Ramsey}.

\subsection{Rapid variation and relative stability}
\label{subsec:RS}

The behavior of the maxima has been well-studied in the literature (see, e.g., \cite{leadbetter2012extremes,resnick2013extreme,embrechts2013modelling,de2007extreme} 
and the references therein). The concept of rapid variation plays an important role in the light-tailed case.

\begin{definition}[Rapid variation] \label{def:rapid-variation}
The survival function of a distribution, $\overline{F}(x) = 1 - F(x)$, is said to be rapidly varying if
\begin{equation}\label{e:def:rapid-variation}
\lim_{x\to\infty} \frac{\overline{F}(tx)}{\overline{F}(x)} 
    = \begin{cases}
    0, & t > 1\\
    1, & t = 1\\
    \infty, & 0 < t < 1
\end{cases}.
\end{equation}
\end{definition}

When $F(x)<1$ for all finite $x$, \citet{gnedenko1943distribution} showed that the distribution $F$ has rapidly varying tails if and only if the maxima of independent observations from $F$ are \emph{relatively stable} in the following sense.
\begin{definition}[Relative stability] \label{def:RS}
Let $\epsilon_p = \left(\epsilon_p(j)\right)_{j=1}^p$ be a sequence of random variables with identical marginal distributions $F$. Define the sequence $(u_p)_{p=1}^\infty$ to be the $(1-1/p)$-th generalized quantile of $F$, i.e., 
\begin{equation} \label{eq:quantiles}
    u_p = F^\leftarrow(1 - 1/p).
\end{equation}
The triangular array ${\cal E} = \{\epsilon_p, p\in\N\}$ is said to have relatively stable (RS) maxima if we have
\begin{equation} \label{eq:RS-condition}
    \frac{1}{u_{p}} M_p := \frac{1}{u_{p}} \max_{j=1,\ldots,p} \epsilon_p(j) \xrightarrow{\P} 1,
\end{equation}
as $p\to\infty$.
\end{definition}

In the case of independent and identically distributed $\epsilon_p(j)$'s, \citet{barndorff1963limit} and \citet{resnick1973almost} obtained necessary and sufficient conditions for the \emph{almost sure stability} of maxima, where the convergence in \eqref{eq:RS-condition} holds almost surely.

While relative stability (and almost sure stability) is well-understood in the independent case, the role of dependence has not been fully explored.
We start this exploration with a small refinement of Theorem 2 in \citet{gnedenko1943distribution} valid under arbitrary dependence.

\begin{proposition}[Rapidly variation and relative stability] \label{prop:rapid-varying-tails}
Assume that the array ${\cal E}$ consists of identically distributed random 
variables with distribution $F$, where $F(x)<1$ for all finite $x>0$. 
\begin{enumerate}
    \item If $F$ has rapidly varying right tail, then for all $\delta>0$,
        \begin{equation} \label{eq:rapid-varying-tails}
            \P\left[\frac{1}{u_p} M_p\le1+\delta\right] \to 1.
        \end{equation}
    \item If in addition, the array ${\cal E}$ has independent entries, then it is relatively stable if and only if $F$ has rapidly varying tail.
    \label{prop:rapid-varying-tails_part-ii}
\end{enumerate}
\end{proposition}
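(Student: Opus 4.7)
My plan is to handle the two parts in order, reducing everything to a union bound plus one basic property of the generalized quantile: by definition $u_p = F^{\leftarrow}(1-1/p)$ satisfies $\overline{F}(u_p) \le 1/p$, while $\overline{F}(u_p - \eta) > 1/p$ for every $\eta > 0$.

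For part (i), under arbitrary dependence I would apply the union bound:
\[
\P\!\left[M_p > (1+\delta) u_p \right] \;\le\; \sum_{j=1}^p \P[\epsilon_p(j) > (1+\delta) u_p] \;=\; p\,\overline{F}((1+\delta) u_p) \;\le\; \frac{\overline{F}((1+\delta) u_p)}{\overline{F}(u_p)}.
\]
Since $u_p \to \infty$, rapid variation in the sense of Definition \ref{def:rapid-variation} applied at $t = 1+\delta > 1$ makes the right-hand ratio vanish, which gives \eqref{eq:rapid-varying-tails}. No dependence assumption enters.

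For the ``if'' direction of part (ii), I would combine (i) with a one-sided lower-tail bound that genuinely needs independence:
\[
\P[M_p \le (1-\delta) u_p] \;=\; F((1-\delta) u_p)^p \;\le\; \exp\!\left(-p\,\overline{F}((1-\delta) u_p)\right).
\]
Writing $p\,\overline{F}((1-\delta) u_p) = \bigl[p\,\overline{F}(u_p)\bigr]\cdot\bigl[\overline{F}((1-\delta) u_p)/\overline{F}(u_p)\bigr]$, the second factor diverges by rapid variation at $t = 1-\delta < 1$. The first factor is harmless once it is bounded away from $0$; this is automatic when $F$ is continuous, and in general can be recovered by replacing $u_p$ with $u_p - \eta$ and invoking $\overline{F}(u_p - \eta) > 1/p$. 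Combining this with (i) yields $M_p/u_p \xrightarrow{\P} 1$.

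For the converse in part (ii), assume independence and relative stability. Then $(1 - \overline{F}((1+\delta) u_p))^p \to 1$ forces $p\,\overline{F}((1+\delta) u_p) \to 0$, and likewise $p\,\overline{F}((1-\delta) u_p) \to \infty$. To promote these quantile-indexed statements into full rapid variation of $\overline{F}$, for each large $x$ I would pick $p = p(x)$ with $u_p \le x < u_{p+1}$, so that $\overline{F}(x)$ is of order $1/p$; for any $t > 1$, picking an auxiliary $1 < t' < t$ and using monotonicity of $\overline{F}$ bounds $\overline{F}(tx)/\overline{F}(x)$ above by a constant times $p\,\overline{F}(t' u_p)$, which tends to $0$. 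The case $0 < t < 1$ is handled symmetrically. The main obstacle throughout is not conceptual but notational: the generalized inverse $F^{\leftarrow}$ and possible atoms of $F$ at $u_p$ force careful bookkeeping to convert statements on the scale $1/p$ into pointwise statements about $\overline{F}(u_p)$, and this sandwiching has to be done consistently in both directions of (ii).
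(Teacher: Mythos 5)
Your part (i) argument is the paper's verbatim: union bound plus $p\,\overline{F}(u_p)\le 1$ gives $\P[M_p>(1+\delta)u_p]\le \overline{F}((1+\delta)u_p)/\overline{F}(u_p)$, which vanishes by rapid variation since $u_p\to\infty$; no dependence assumption is needed, exactly as you say.

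For part (ii) the paper gives no proof at all --- it cites Gnedenko (1943) as a classic result --- so your self-contained sketch is a genuine departure, and it is correct in outline. One spot deserves more care than you give it. In the ``if'' direction you want $p\,\overline{F}((1-\delta)u_p)\to\infty$, but the first factor in your factorization, $p\,\overline{F}(u_p)$, can legitimately tend to $0$: the definition of $u_p$ only gives the one-sided bound $p\,\overline{F}(u_p)\le 1$. Swapping $u_p$ for $u_p-\eta$ with a \emph{fixed} $\eta$ yields $p\,\overline{F}(u_p-\eta)\ge 1$ but decouples that factor from the ratio $\overline{F}((1-\delta)u_p)/\overline{F}(u_p)$, so on its own it does not finish the job. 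A cleaner repair keeps the rapid-variation ratio tied to a fixed multiplicative margin: choose $0<\delta'<\delta$; since $u_p\to\infty$, eventually $(1-\delta')u_p\le u_p-\eta$, so $p\,\overline{F}((1-\delta')u_p)\ge 1$, and therefore
\[
p\,\overline{F}\big((1-\delta)u_p\big)\;\ge\;\frac{\overline{F}\!\left(\tfrac{1-\delta}{1-\delta'}\,(1-\delta')u_p\right)}{\overline{F}\!\left((1-\delta')u_p\right)}\;\longrightarrow\;\infty
\]
by rapid variation applied at the fixed ratio $(1-\delta)/(1-\delta')<1$. Combined with part (i), this gives relative stability under independence. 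Your converse (sandwiching $x$ between consecutive quantiles $u_p$ and $u_{p+1}$ and passing to the limit in $t$) is essentially Gnedenko's original argument and is workable, modulo the same generalized-inverse bookkeeping you acknowledge; since the paper outsources that direction entirely to the citation, there is nothing in-text to compare your version against.
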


\begin{proof}[Proof of Proposition \ref{prop:rapid-varying-tails}] 
By the union bound and the fact that 
$p\overline F(u_p) \le 1$, we have
\begin{align}\label{e:prop:rapid-varying-tails_part-i-1}
\P [ M_p > (1+\delta)u_p] \le p \overline F((1+\delta)u_p)
 \le \frac{\overline F((1+\delta)u_p)}{\overline F(u_p)}.
\end{align}
In view of \eqref{e:def:rapid-variation} (rapid variation) and
since $u_p\to\infty$, as $p\to\infty$, the right-hand side of \eqref{e:prop:rapid-varying-tails_part-i-1} vanishes 
as $p\to\infty$, for all $\delta>0$.  This completes the proof of \eqref{eq:rapid-varying-tails}. Part 2 is a re-statement of the classic result due to Gnedenko in \cite{gnedenko1943distribution}.
\end{proof}
%The proof can be found in Appendix \ref{sec:proofs}.

We will see that Gaussian, Exponential, Laplace, Gamma (Example \ref{exmp:AGG}), log-normal (Example \ref{exmp:heavier-than-AGG}), and Gompertz (Example \ref{exmp:lighter-than-AGG}) distributions all have rapidly varying tails. On the other hand, heavy-tailed distributions like the Pareto and t-distribution do not.

% \begin{corollary} \label{cor:AGG-is-RS}
% If $F\in\text{AGG}(\nu)$, $\nu>0$, an independent array ${\cal E}$ is relatively stable. Further, $\text{AGG}(\nu)$ is the only class of model with $u_{p} \sim \left(\nu\log{p}\right)^{1/\nu}$.
% \end{corollary}
% 
% \begin{proof}[Proof of Corollary \ref{cor:AGG-is-URS}]
% By Proposition \ref{prop:rapid-varying-tails}, it is enough to show that in the AGG model,  $\overline{F}$ has rapidly varying tail. 
% By definition of the AGG tails \eqref{def:AGG}, we have
% $$
% \lim_{t\to\infty} \frac{\log{\left(\overline{F}(tx)\Big/\overline{F}(t)\right)}}{-\frac{1}{\nu}t^\nu(x^\n% u-1)} = 1,
% $$
% where the denominator tends to $+\infty$ or $-\infty$ depending on whether $0<x<1$ or $x>1$.
% Therefore, we must have $\overline{F}(tx) / \overline{F}(t)$ converging to $\infty$ or 0 in the correct range of $x$'s; the case where $x=1$ is trivial.
% The last claim follows from the expression for AGG quantiles; see Proposition \ref{prop:quantile}.
% \end{proof}

\begin{example}[Generalized AGG] \label{exmp:AGG}
A distribution is said to have \emph{Generalized AGG} right tail, if $\log{\overline{F}}$ is regularly varying,
\begin{equation} \label{eq:GAGG}
    \log{\overline{F}(x)} = - x^\nu L(x),
\end{equation}
where $\nu>0$ and $L: (0,+\infty)\to(0,+\infty)$ is a slowly varying function. (A function is said to be slowly varying if $\lim_{x\to\infty}L(tx)/L(x) = 1$ for all $t>0$.) Note that the AGG model corresponds to the special case where $L(x)\to 1/\nu$, as $x\to\infty$.

Relation \eqref{eq:rapid-varying-tails} holds for all arrays $\cal E$ with \emph{generalized} AGG marginals; if the entries are independent, the maxima are relatively stable. 
This follows directly from Proposition \ref{prop:rapid-varying-tails}, once we show that $F$ has rapidly varying tail. 
Indeed, by \eqref{eq:GAGG}, we have
$$
\log{\left(\overline{F}(tx)\Big/ \overline{F}(x)\right)} = - L(x)x^\nu\left(t^\nu\frac{L(tx)}{L(x)} - 1\right),
$$
which converges to $-\infty$, 0, and $+\infty$, as $x\to\infty$, when $t>1$, $t=1$, and $t<1$, respectively.
\end{example}

The AGG class encompasses a wide variety of rapidly varying tail models such as Gaussian and double exponential distributions. The larger class \eqref{eq:GAGG} is needed, however, for the Gamma distribution.
Models such as Log-normal have heavier tails than the AGG model, yet still have rapidly varying tails. Therefore Proposition \ref{prop:rapid-varying-tails} is also applicable.

\begin{example}[Heavier than AGG] \label{exmp:heavier-than-AGG}
Let $\gamma>1$, $c>0$, and suppose that
\begin{equation} \label{eq:heavier-than-AGG}
    \log{\overline{F}(x)} = - \left(\log x\right)^\gamma \left(c+M(x)\right),
\end{equation}
where $\lim_{x\to\infty} M(x)\log{x}= 0$. Then, Relation \eqref{eq:rapid-varying-tails} holds under 
model \eqref{eq:heavier-than-AGG}. Further, if the entries in the array are independent, the 
maxima are relatively stable.

The behavior of the quantiles $u_p$ in this model is as follows. As $p\to\infty,$
\begin{equation*}
    u_p \sim \exp{\left\{\left(c^{-1}\log{p}\right)^{1/\gamma}\right\}}
    \iff c\left(\log{u_p}\right)^{\gamma} + o(1) = \log(p) = - \log \overline{F}(u_p).
\end{equation*}
since $u_p$ diverges, and $M(u_p)$ is $o((\log u_p)^{-1})$.
\end{example}

Lastly, Proposition \ref{prop:rapid-varying-tails} applies to error models with lighter tails than the AGG class.

\begin{example}[Lighter than AGG] \label{exmp:lighter-than-AGG}
With $\nu>0$, and $L(x)$ a slowly varying function, the class of distributions
\begin{equation} \label{eq:lighter-than-AGG}
    \log{\overline{F}(x)} = - \exp{\left\{x^\nu L(x)\right\}},
\end{equation}
is rapidly varying.
The quantiles can be derived explicitly in a subclass of \eqref{eq:lighter-than-AGG} where $L(x)\to 1$, or equivalently, when $\log{|\log{\overline{F}(x)}|}\sim x^\nu$,
\begin{equation*}
    u_p \sim \left(\log \log{p}\right)^{1/\nu}
    \iff \exp{\left\{u_p^\nu\left(1+o(1)\right)\right\}} = \log(p) = - \log \overline{F}(u_p).
\end{equation*}
\end{example}
%The proofs of the rapid variation of the distributions in Examples \ref{exmp:heavier-than-AGG} and \ref{exmp:lighter-than-AGG} are entirely analogous to that of Example \ref{exmp:AGG}, and omitted.
Strong classification boundaries for the classes of models in Examples \ref{exmp:heavier-than-AGG} and \ref{exmp:lighter-than-AGG} will be derived in Appendix \ref{sec:other-boundaries}.

\subsection{Dependence and uniform relative stability}
\label{subsec:URS}

An important ingredient needed for a converse of Theorem \ref{thm:sufficient} is an appropriate characterization of the error dependence structure under which the said boundary is tight.
%We see in Section \ref{subsec:RS} that relative stability holds when the error are light-tailed and independent.
The notion of \emph{uniform relative stability} turns out to be the key in characterizing such dependence structures when studying the behavior of maxima of dependent light-tailed sequences.

\begin{definition}[Uniform Relative Stability] \label{def:URS}
Under the notations established in Definition \ref{def:RS}, the triangular array ${\cal E}$ is said to have uniform relatively stable (URS) maxima if for \emph{every} sequence of subsets $S_p\subseteq\{1,\ldots,p\}$ such that $|S_p| \to \infty$, we have
\begin{equation} \label{eq:URS-condition}
    \frac{1}{u_{|S_p|}} M_{S_p} := \frac{1}{u_{|S_p|}} \max_{j\in S_p} \epsilon_p(j) \xrightarrow{\P} 1,
\end{equation}
as $p\to\infty$, where $u_q,\ q\in \{1,\ldots,p\}$ is the generalized quantile in \eqref{eq:quantiles}.
The collection of arrays ${\cal E} = \{ \epsilon_p(j) \}$ with URS maxima is 
denoted $U(F)$.
\end{definition}

Uniform relative stability is, as its names suggests, a stronger requirement on dependence than relative stability. 
From the last section we see that, an array with iid components sharing a marginal distribution $F$ with rapid varying tails has relatively stable maxima; it is easy to see that URS also follows, by independence of the entries.

\begin{corollary} \label{cor:AGG-is-URS}
If $F\in\text{AGG}(\nu)$, $\nu>0$, an independent array ${\cal E}$ is URS; in this case, URS holds with $u_{|S_p|} \sim \left(\nu\log{|S_p|}\right)^{1/\nu}$.
\end{corollary}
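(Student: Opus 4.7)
The plan is to reduce the URS condition to the ordinary RS condition by exploiting independence, and then invoke Proposition 3.1 together with the quantile asymptotics in Proposition 2.1.

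First I would observe that, because the entries of ${\cal E}$ are independent and identically distributed with common distribution $F$, for any deterministic sequence of subsets $S_p \subseteq \{1,\dots,p\}$ the maximum $M_{S_p} = \max_{j \in S_p} \epsilon_p(j)$ has exactly the same law as the maximum of $|S_p|$ i.i.d.\ copies of $F$. Writing $q_p := |S_p|$, the assumption $|S_p| \to \infty$ becomes $q_p \to \infty$, and the URS statement \eqref{eq:URS-condition} reduces to the claim that $M_{q_p}/u_{q_p} \xrightarrow{\P} 1$ along every divergent integer sequence $q_p$, where $M_{q_p}$ denotes the maximum of $q_p$ i.i.d.\ $F$ variables.

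Next I would note that $F \in \text{AGG}(\nu)$ has rapidly varying right tail: indeed, by Definition 2.1, $\log \overline F(x) \sim -x^\nu/\nu$, so for $t>1$, $\log(\overline F(tx)/\overline F(x)) \sim -(t^\nu-1)x^\nu/\nu \to -\infty$, and similarly $\to +\infty$ for $0<t<1$, verifying \eqref{e:def:rapid-variation}. Hence by Proposition 3.1, part 2 (Gnedenko's theorem), the i.i.d.\ array with marginal $F$ is relatively stable, i.e.\ $M_p/u_p \xrightarrow{\P} 1$ as $p \to \infty$. Since this convergence holds along the full sequence $p \to \infty$, it automatically holds along every divergent subsequence $q_p \to \infty$; this delivers $M_{q_p}/u_{q_p} \xrightarrow{\P} 1$, which is precisely the URS conclusion for the original array after translating back to $S_p$.

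Finally, the stated asymptotics $u_{|S_p|} \sim (\nu \log |S_p|)^{1/\nu}$ is just Proposition 2.1 applied with $p$ replaced by $|S_p|$, since $|S_p| \to \infty$. I do not foresee a real obstacle here: the only subtlety is recognizing that independence lets the URS requirement collapse onto a single distributional statement indexed by $|S_p|$, after which Proposition 3.1 does all the work. No Sudakov-Fernique or Ramsey machinery is required at this stage; those tools will be needed only for the genuinely dependent Gaussian case treated in Theorem 3.1 later on.
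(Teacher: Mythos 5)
Your proof is correct and follows the same route the paper takes: reduce URS to ordinary RS by noting that under independence $M_{S_p}$ has the law of a maximum of $|S_p|$ i.i.d.\ draws from $F$, verify that $F\in\text{AGG}(\nu)$ is rapidly varying (the paper does this in the more general setting of Example \ref{exmp:AGG}), and then invoke Proposition \ref{prop:rapid-varying-tails}\,(2) along the divergent sequence $q_p=|S_p|$, with the quantile asymptotics supplied by Proposition \ref{prop:quantile}. The paper only gestures at this with ``it is easy to see that URS also follows, by independence of the entries''; your write-up fills in exactly the intended details.
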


On the other hand, RS and URS hold under much broader dependence structures than just 
independent errors. In turn, the stability concepts can be used to characterize dependence structures under which the maxima of error sequences {\em concentrate} around the quantiles \eqref{eq:quantiles} in the sense of \eqref{eq:RS-condition}.

The condition on the dependence structure of the array ${\cal E}$ imposed through the language of 
uniform relative stability seems, however, somewhat mysterious and implicit.  Fortunately, this is an 
extremely weak requirement.  In the rest of this section, we illuminate this condition in the case of 
Gaussian errors.  Specifically, we establish a simple necessary and sufficient condition for the 
uniform relative stability in terms of their covariance structure.

\begin{definition}[Uniformly decreasing dependence (UDD)] \label{def:UDD}
Consider a triangular array of jointly Gaussian distributed errors 
${\cal E} = \left\{\left(\epsilon_p(j)\right)_{j=1}^p, p = 1,2,\ldots\right\}$ 
with unit variances,
$$
\epsilon_p \sim \text{N}(0, \Sigma_p), \quad p=1,2,\ldots.
$$
The array ${\cal E}$ is said to be uniform decreasingly dependent (UDD) if 
for every $\delta>0$ there exists a finite $N(\delta)<\infty$, such that for every $j\in\{1,\ldots,p\}$, and $p\in\N$, we have
\begin{equation} \label{eq:UDD-definition}
    \Big|\left\{k\in\{1,\ldots,p\}:\Sigma_p(j,k)>\delta\right\}\Big| \le N(\delta)\quad \text{for all  } \delta>0.
\end{equation}
\end{definition}
That is, for any coordinate $j$, the number of coordinates which are more than $\delta$-correlated with $\epsilon_p(j)$ does not exceed $N(\delta)$. 

Note that the bound in \eqref{eq:UDD-definition} holds uniformly in $j$ and $p$, and only depends on $\delta$.
Also observe that in on the left-hand side of \eqref{eq:UDD-definition}, we merely count in each row of $\Sigma_p$ the number of exceedances of covariances (not their absolute values!) over level $\delta$.

\begin{remark} \label{rmk:choice-of-N(delta)}
Without loss of generality, we may require that $N(\delta)$ be a non-increasing function of $\delta$, for we can take
$$
N(\delta) = \sup_{p,j} \Big|\{k:\Sigma_p(j,k)>\delta\}\Big|,
$$
which is non-increasing in $\delta$.
Definition \ref{def:UDD} therefore states that the array is UDD when $N(\delta)<\infty$ for all $\delta>0$.
\end{remark}

Observe that the UDD condition does not depend on the order of the coordinates in the error 
vector $\epsilon_p = (\epsilon_p(j))_{j=1}^p$.  Often times, however, the errors are thought of 
coming from a stochastic process indexed by time or space.  To illustrate the generality of the 
UDD condition, we formulate next a simple sufficient condition (UDD$^\prime$) that is easier to 
check in a time-series context.

\begin{definition}[UDD\,$^\prime$]\label{d:UDD-prime}
For $\epsilon_p \sim N(0,\Sigma_p)$ with unit variances, an array ${\cal E} = \left(\epsilon_p(j)\right)_{j=1}^p$ is said to satisfy the UDD\,$^\prime$ condition if there 
exist:
\begin{enumerate}
    \item[(i)] permutations $l = l_p$ of $\{1,\ldots,p\}$, for all $p\in\N$, and
    \item[(ii)] a non-negative sequence $(r_n)_{n=1}^\infty$ converging to zero $r_n\to 0$, as $n\to\infty$,
\end{enumerate}
such that 
\begin{equation} \label{eq:weak-correlation}
    \sup_{p\in\N} |\Sigma_p\left(i',j'\right)| \le r_{|i-j|}.
\end{equation}
where $i' = l(i)$, $j' = l(j)$, for all $i,j\in\{1,\ldots,p\}$.
\end{definition}

\begin{remark}
Without loss of generality, we may also require that $r_n$ be non-increasing in $n$, for we can replace $r_n$ with $r'_n = \sup_{m\ge n} r_m$, which is non-increasing in $n$.
\end{remark}

\begin{proposition} \label{prop:UDD-equivalent}
UDD\,$^\prime$ implies UDD.
\end{proposition}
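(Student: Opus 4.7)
The plan is to translate the UDD' condition, which controls covariances in terms of the distance between re-labeled indices, into a uniform bound on the number of coordinates exceeding an arbitrary correlation threshold $\delta$, which is exactly the UDD requirement.

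First, I would fix an arbitrary $\delta>0$ and exploit the fact that, by the remark following Definition \ref{d:UDD-prime}, the envelope sequence $(r_n)$ may be taken non-increasing and still tends to $0$. Hence I can define
\[
N_0(\delta) := \inf\{n\in\N\ :\ r_n<\delta\}<\infty,
\]
so that $r_n<\delta$ for all $n\ge N_0(\delta)$. The content of UDD' is then that, after the relabeling $i\mapsto i'=l_p(i)$, covariances between coordinates whose pre-images under $l_p$ are at index-distance at least $N_0(\delta)$ are smaller than $\delta$ in absolute value.

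Second, I would verify the UDD bound directly. Fix any $p$ and any original index $j_0\in\{1,\ldots,p\}$ and set $i_0 := l_p^{-1}(j_0)$. For any other $k$ with $i := l_p^{-1}(k)$, the UDD' hypothesis gives $|\Sigma_p(j_0,k)|\le r_{|i_0-i|}$. Therefore
\[
\{k\in\{1,\ldots,p\}\ :\ \Sigma_p(j_0,k)>\delta\}\ \subseteq\ \{k\ :\ r_{|i_0-l_p^{-1}(k)|}>\delta\}\ \subseteq\ \{k\ :\ |i_0-l_p^{-1}(k)|<N_0(\delta)\},
\]
where the second inclusion uses the contrapositive of the definition of $N_0(\delta)$ and monotonicity of $r_n$. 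Since $l_p^{-1}$ is a bijection of $\{1,\ldots,p\}$, the last set has at most $2N_0(\delta)-1$ elements. Taking $N(\delta):=2N_0(\delta)-1$ yields a bound independent of both $p$ and $j_0$, which is precisely the UDD condition \eqref{eq:UDD-definition}.

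There is really no obstacle here: the only ingredients are (i) the WLOG reduction to a monotone tail-envelope $r_n$, and (ii) the fact that a bijection preserves cardinalities of index sets defined by distance in the pre-image. The one subtlety worth flagging explicitly in the write-up is that UDD counts unsigned exceedances $\Sigma_p(j,k)>\delta$ rather than $|\Sigma_p(j,k)|>\delta$, but this only makes the target set smaller, so the bound above still applies.
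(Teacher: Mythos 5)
Your proof is correct and follows essentially the same route as the paper's: fix $\delta$, use the WLOG-monotone envelope $(r_n)$ to find a cutoff index beyond which $r_n$ drops below $\delta$, and then observe that after relabeling via $l_p$ at most finitely many indices can sit within that cutoff of $l_p^{-1}(j)$, which is preserved because $l_p$ is a bijection. Your explicit note that UDD counts one-sided exceedances while UDD$^\prime$ bounds absolute values is a nice clarification that the paper leaves implicit, but the argument and the resulting bound (yours $2N_0(\delta)-1$ versus the paper's $2M+1$) are effectively the same.
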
 

\begin{proof}%[Proof of Proposition \ref{prop:UDD-equivalent}]
%The functions $N(\delta)$ and $r(n)=r_n$ are inverses of each other.
% {\bf $\text{UDD} \implies \text{UDD'}$:}
% If $N(0)<\infty$, then we can take 
% $$
% r_n = \underbrace{1, \ldots, 1}_{\lfloor N(0)/2\rfloor + 1}, 0, 0, \ldots.
% $$
% and recursively construct the permutations as follows.
% Start with any element

% We proof the contrapositive. 
% Suppose for all permutations and any sequence $r_n\to0$, there exists $i,j$ such that $\Sigma(i',j')>r_{|i'-j'|}$,
% then for any $\delta>0$ and any finite $M<\infty$, we can take $r_n$ to be a sequence of $M$ 1's followed by a $\delta$, i.e., 
% $$
% r_n = \underbrace{1, \ldots, 1}_{M+1}, \delta, \ldots.
% $$
% \fbox{Not true:}
% However, since there exists $i',j'$ such that $\Sigma(i',j')>r_{|i'-j'|}$, the set 
% $$
% S = l^{-1}\left(\left\{j',i'-N,\ldots,i'-1,i',i'+1,\ldots,i'+N\right\}\right)
% $$ 

% {\bf $\text{UDD'} \implies \text{UDD}$:} 
Since $r_n\to 0$, for any $\delta > 0$, there exists an integer 
$M = M(\delta)<\infty$ such that $r_n\le\delta$, for all $n\ge M$. 
Thus, by \eqref{eq:weak-correlation}, for every fixed 
$j' \in\{1,\ldots,p\}$, we can have $|\cov(\epsilon_p(k'),\epsilon_p(j'))| > \delta$,
only if $k'$ belongs to the set:
$$ 
 \left\{ k' \in \{1,\dots,p\} \, :\, j-M \le  k := l_p^{-1}(k') \le j+M \right\},
$$
where $j:= l_p^{-1}(j')$. That is, there are at most $2M+1<\infty$ indices  $k'\in\{1,\dots,p\}$, whose covariances with $\epsilon(j')$ may exceed $\delta$. 
Since this holds uniformly in $j'\in\{1,\ldots,p\}$, Condition UDD follows with 
$N(\delta) = 2M+1$.
\end{proof}

% \begin{definition}[Uniformly decreasing dependence (UDD)] \label{def:weak-dependence}
% Consider a triangular array of jointly Gaussian distributed errors $\left(\epsilon_p(j)\right)_{j=1}^p$ with unit variances, $\epsilon_p \sim \mathcal N(0,\Sigma_p)$. The array ${\cal E}$ is said to be uniform decreasingly dependent (UDD) with rate $(r_n)_{n=1}^\infty$ if 
% \begin{equation} \label{eq:weak-correlation}
%     \sup_p |\Sigma_p(i,j)| \le r_{|i-j|}
% \end{equation}
% such that $r_n\to 0$, as $n\to\infty$.
% \end{definition}
% 
% \begin{remark} \label{rmk:UDD-equivalent}
% In situations where there is no natural ordering of the components, it is sufficient that a permutation of the vector $\epsilon$ in its coordinates satisfy the requirements above. 
% In fact, the UDD condition can be equivalently stated as follows: for any $\delta>0$, and any coordinate $j\in\{1,\ldots,p\}$, there are at most $N(\delta)<\infty$ coordinates whose covariances with $\epsilon(j)$ exceed $\delta$; here $N(\delta)$ is a deterministic function independent of $p$.
% $N(\delta) \to 1$ as $\delta \to 0$.
% \end{remark}

We now state the last (and main) result of this section: a Gaussian sequence is URS if and only if it is UDD.
The URS condition essentially requires that the dependencies decay in a uniform fashion, the rate at which dependence decay does \emph{not} matter.

\begin{theorem} \label{thm:Gaussian-weak-dependence}
Let ${\cal E}$ be a Gaussian triangular array with standard normal marginals.  
The array ${\cal E}$ has uniformly relatively stable (URS) maxima if and only if it is uniformly decreasing dependent (UDD).
\end{theorem}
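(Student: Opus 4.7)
The plan is to establish the two implications separately, using a Sudakov-Fernique comparison for UDD $\Rightarrow$ URS and a direct conditional Gaussian decomposition for URS $\Rightarrow$ UDD.

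\textbf{UDD implies URS.} The upper bound $M_{S_p}/u_{|S_p|} \le 1 + o_P(1)$ follows by applying Proposition \ref{prop:rapid-varying-tails} to the subarray $(\epsilon_p(j))_{j\in S_p}$, whose standard Gaussian marginals are rapidly varying irrespective of dependence. For the matching lower bound, UDD lets us greedily extract a nearly uncorrelated subset: form a graph on $S_p$ with an edge $\{j,k\}$ whenever $\Sigma_p(j,k) > \delta$; by UDD every vertex has degree at most $N(\delta)$, so a standard greedy independent-set procedure yields $T_p \subseteq S_p$ with $|T_p| \ge |S_p|/(N(\delta)+1)$ and $\Sigma_p(i,j) \le \delta$ for all distinct $i,j \in T_p$. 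Since $\E(\epsilon_p(i)-\epsilon_p(j))^2 = 2(1-\Sigma_p(i,j)) \ge 2(1-\delta)$ on $T_p$, Sudakov-Fernique compared with iid $N(0,1-\delta)$ variables gives
\[
\E\bigl[\max_{i\in T_p}\epsilon_p(i)\bigr] \;\ge\; \sqrt{1-\delta}\,\E\bigl[\max_{i\in T_p} Z_i\bigr] \;\sim\; \sqrt{1-\delta}\,u_{|T_p|},
\]
for $Z_i$ iid standard normal. Because $N(\delta)$ does not depend on $p$, $\log|T_p|/\log|S_p|\to 1$, hence $u_{|T_p|}\sim u_{|S_p|}$. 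The Borell-TIS inequality (variance proxy $1$) then promotes the expectation bound to a probability bound $M_{S_p} \ge \sqrt{1-\delta}\,u_{|S_p|}(1-o_P(1))$. Letting $\delta \downarrow 0$ and combining with the upper bound gives URS.

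\textbf{URS implies UDD (contrapositive).} Suppose the array is not UDD. Then there exist $\delta_0 > 0$, $p_n \to \infty$, and indices $j_n$ such that $A_{p_n} := \{k : \Sigma_{p_n}(j_n,k) > \delta_0\}$ has $|A_{p_n}|\to\infty$. Define $S_p := A_p$ when $|A_p|\ge\log p$ and $S_p := \{1,\dots,\lceil\log p\rceil\}$ otherwise; then $|S_p|\to\infty$ and $S_{p_n} = A_{p_n}$ for all large $n$. For each $k\in A_{p_n}\setminus\{j_n\}$, the conditional Gaussian decomposition
\[
\epsilon_{p_n}(k) \;=\; \rho_k\,\epsilon_{p_n}(j_n) \;+\; \sqrt{1-\rho_k^2}\,W_k, \qquad \rho_k := \Sigma_{p_n}(j_n,k) \in (\delta_0, 1],
\]
expresses $\epsilon_{p_n}(k)$ in terms of a standard Gaussian $W_k$ independent of $\epsilon_{p_n}(j_n)$. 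Using $\max_k(a_k+b_k)\le\max_k a_k+\max_k b_k$, $|\rho_k|\le 1$, and $\sqrt{1-\rho_k^2}\le\sqrt{1-\delta_0^2}$,
\[
M_{S_{p_n}} \;\le\; |\epsilon_{p_n}(j_n)| \;+\; \sqrt{1-\delta_0^2}\;\max_{k\in A_{p_n}\setminus\{j_n\}} W_k.
\]
Proposition \ref{prop:rapid-varying-tails} applied to the $W_k$'s (standard Gaussian marginals, hence rapidly varying) yields $\max_k W_k \le (1+o(1))u_{|A_{p_n}|}$ with probability tending to one, while $|\epsilon_{p_n}(j_n)| = O_P(1) = o(u_{|S_{p_n}|})$. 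Therefore $\limsup_n M_{S_{p_n}}/u_{|S_{p_n}|} \le \sqrt{1-\delta_0^2} < 1$ in probability along the subsequence $p_n$, contradicting URS.

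\textbf{Main obstacle.} The most delicate step is the Sudakov-Fernique lower bound in the first direction: one must verify that the greedy extraction produces $T_p$ with $\log|T_p|/\log|S_p|\to 1$ (which hinges on $N(\delta)$ being $p$-free) and then upgrade an expectation bound to a probability bound via Gaussian concentration. The converse is essentially a single conditional-variance computation; no Ramsey-type extraction of a highly-correlated clique is needed, because a single high-correlation \emph{hub} $j_n$ already obstructs URS. The structural results on correlation matrices in Section \ref{subsec:Ramsey} are thus not required for the equivalence per se, though they would provide an alternative clique-based route and may be of independent interest.
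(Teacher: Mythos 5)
Your proof is essentially correct, and the two directions differ from the paper in interesting ways.

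For UDD $\Rightarrow$ URS, your argument is the same in spirit as the paper's: extract a near-uncorrelated $\gamma$-packing $T_p\subseteq S_p$ of size $\ge |S_p|/N(\delta)$ via a greedy construction, minorize $\E M_{S_p}$ with Sudakov--Fernique against iid $N(0,1-\delta)$ variables, and observe $u_{|T_p|}\sim u_{|S_p|}$ because $N(\delta)$ is $p$-free. The only difference is in how the expectation bound is promoted to a probability bound: you invoke Borell--TIS, whereas the paper (Lemma \ref{lemma:Gaussian-maxima-lower-expectation} together with \eqref{eq:Gaussian-maxima-lower-expectation-bound}) proceeds without Gaussian concentration, instead writing $\E[(M_{S_p}/u_q - (1+c_q))] = \E[(\cdot)_+] - \E[(\cdot)_-]$, showing $\E[(\cdot)_+]\to 0$ (Corollary \ref{cor:AGG-max-upper-bound-expectation}, which rests only on the AGG union bound and uniform integrability) to conclude $\E[(\cdot)_-]\to 0$, and then applying Markov. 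Both routes are valid; Borell--TIS is slicker, while the paper's argument has the virtue of not leaning on Gaussianity beyond the single Sudakov--Fernique step, which it explicitly flags with a view toward extensions.

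For URS $\Rightarrow$ UDD, your approach is genuinely different and notably simpler. The paper takes the hub $j_p$ and its $\delta_0$-neighborhood $A_p$ and, via Ramsey's theorem (Proposition \ref{prop:lower-bound-correlation-Ramsey} and Lemma \ref{lemma:positive-correlation}), extracts a sub-clique $K_p\subseteq A_p$ of size $\ge\log_2\sqrt{|A_p|}$ whose \emph{pairwise} correlations all exceed $c^2/2$; it then uses Slepian to compare $\max_{K_p}\epsilon_p$ with a one-factor model. Your observation is that none of this extraction is needed: the single hub already does all the work. Conditioning on $\epsilon_{p_n}(j_n)$ and writing $\epsilon_{p_n}(k)=\rho_k\epsilon_{p_n}(j_n)+\sqrt{1-\rho_k^2}\,W_k$ uses only the fact that each $W_k$ has standard Gaussian marginal and that $\sqrt{1-\rho_k^2}\le\sqrt{1-\delta_0^2}$; the $W_k$'s may be arbitrarily dependent among themselves, and this is exactly what Proposition \ref{prop:rapid-varying-tails} part 1 tolerates. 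This bypasses both Ramsey and Slepian, replaces the clique by a hub, and reveals that the whole combinatorial machinery of Section \ref{subsec:Ramsey} is not actually required for the equivalence (it remains of independent interest as a structural result about correlation matrices). This is a real simplification and it also extracts a much larger obstructing set (all of $A_{p_n}$, not merely a logarithmic-size subset of it).

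Two small points to tighten. First, in the padding step you define $S_p := A_p$ whenever $|A_p|\ge\log p$, but this does not guarantee $S_{p_n}=A_{p_n}$ along the offending subsequence: $p_n$ may grow so fast relative to $|A_{p_n}|$ that $|A_{p_n}|<\log p_n$ for all $n$. The fix is trivial (e.g., pass to a further subsequence so $|A_{p_n}|$ is strictly increasing, and pad $S_p$ for $p_{n-1}<p<p_n$ with $\{1,\dots,\min(p,|A_{p_{n-1}}|)\}$), but as written the sentence does not prove what it claims. Second, when $\rho_k=1$ the decomposition degenerates; one should either note that then $\epsilon_{p_n}(k)=\epsilon_{p_n}(j_n)\le|\epsilon_{p_n}(j_n)|$ directly, or set $W_k:=0$ by convention, so the inequality $\sqrt{1-\rho_k^2}W_k\le\sqrt{1-\delta_0^2}\,(\max_j W_j)_+$ still holds.
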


The proof is given in Section \ref{sec:proof-UDD-URS-outline}. 

We finish this subsection with a brief discussion on the relationships between UDD and other dependence conditions in the context of extreme value theory.

Suppose that the array of errors  ${\cal E}$ comes from a stationary Gaussian time series $\epsilon(j),\ j\in \mathbb{N}$, with auto-covariance $r_p=\cov(\epsilon(j+p),\epsilon(j))$. 
One is interested in the asymptotic behavior of the maxima $M_p:=\max_{j=1,\dots,p} \epsilon(j)$.

In this setting, the Berman's condition, introduced in \cite{berman1964limit}, requires that
\begin{equation} \label{eq:Berman}
    r_p \log p \to 0,\ \ \mbox{ as }p\to\infty.
\end{equation}
This condition entails that 
\begin{equation}
    \label{eq:Gauss-max-in-distribution}
  a_p (M_p - b_p) \stackrel{d}{\longrightarrow } Z,\  \ \mbox{ as }p\to\infty,
\end{equation}
with the Gumbel limit distribution $\mathbb P [Z\le x] = \exp\{-e^{-x}\},\ x\in \mathbb R$, 
where 
$$
a_p = \sqrt{2\log p},\quad b_p  = \sqrt{2\log p} - \frac{1}{2}\left(\sqrt{2\log p}\right)^{-1}\left(\log \log (p) + \log(4\pi)\right),
$$ 
are {\em the same} centering and normalization sequences
as in the case of iid $\epsilon(j)$'s.  
The Berman's condition is one of the weakest dependence conditions  in the literature for which this result holds. See, for example, Theorem 4.4.8 in \citet{embrechts2013modelling}, where the Berman's condition is described as ``very weak''.

For dependence conditions weaker than \eqref{eq:Berman}, it is known that the sequences of normalizing and centering constants in \eqref{eq:Gauss-max-in-distribution} are {\em different} from the iid case, and the corresponding limit is no longer Gumbel; see, for example, Theorems 6.5.1 and  6.6.4 in \citet{leadbetter2012extremes}. 
In particular, \citet{mccormick1976weak} derived the normalizing constants when both $r_p\to 0$ monotonically and  $r_p \log p \to \infty$ monotonically, as $p\to\infty$. 
In this case, convergence in distribution still takes place, with the maxima concentrating along a sequence asymptotic to \eqref{eq:quantiles}.

On the other hand, if one is merely interested in the asymptotic relative stability of the Gaussian maxima rather than in their distributional limit, then the Berman's condition
can be relaxed significantly. 
Observe that by Proposition \ref{prop:UDD-equivalent},  the Berman condition \eqref{eq:Berman} implies UDD and hence the relative stability (Theorem \ref{thm:Gaussian-weak-dependence}), i.e., 
\begin{equation} \label{eq:Gaussian-URS}
  \frac{1}{b_p} M_p \stackrel{\mathbb P}{\to} 1,\quad\mbox{as}\quad p\to\infty.
\end{equation}
This {\em concentration of maxima} property can of course be readily deduced from \eqref{eq:Gauss-max-in-distribution}, since $a_p b_p \sim 2\log(p) \to \infty$ as $p\to\infty$.
Our Theorem \ref{thm:Gaussian-weak-dependence} shows that \eqref{eq:Gaussian-URS} holds if and only if the much weaker uniform dependence condition UDD holds. 
Note that this condition is coordinate free, i.e., neither monotonicity of the sequence $r_p$ nor stationarity of the underlying array is required. 
The method of proof is also very different from the results on distributional convergence in the references mentioned above. 
In our high dimensional support estimation context, the notion of relative stability is sufficient and more natural than the finer notions of distributional convergence.

\subsection{Proof of Theorem \ref{thm:Gaussian-weak-dependence}}
\label{sec:proof-UDD-URS-outline}

The proof of the `only if' part is detailed in Section \ref{subsubsec:URS=>UDD}. 
The proof uses a surprising, yet elegant application of Ramsey's Theorem from the study of combinatorics; this application, and its consequences in high-dimensional probability, are presented in Section \ref{subsec:Ramsey}.
The proof of the `if' part is sketched in Section \ref{subsubsec:UDD=>URS},
details are filled in later in Section \ref{subsec:bounding-upper-tails-of-maxima} and \ref{subsec:bounding-lower-tails-of-maxima}.

\subsubsection{{\bf URS implies UDD (`only if' part of Theorem \ref{thm:Gaussian-weak-dependence})}} \label{subsubsec:URS=>UDD}
In view of Remark \ref{rmk:choice-of-N(delta)}, UDD is equivalent to the requirement that
$N(\delta) := 1+\sup_{p} N_p(\delta) < \infty$ for all $\delta\in(0,1)$,
where 
\begin{equation} \label{eq:N_p(c)}
    N_p(\delta) := \max_{j\in\{1,\ldots,p\}} \Big|\{i:i\neq j,\;\Sigma_p(j,i) > \delta\}\Big|.
\end{equation}
Therefore, if ${\cal E}$ is not UDD, then there must exist a constant $c\in (0,1)$ for which $N(c)$ is infinite, i.e., there is a subsequence $\widetilde p\to\infty$ such that $N_{\widetilde p}(c) \to \infty$.
Without loss of generality,  we may assume that $\widetilde{p}=p$.

Let $j_p(c)$ be the maximizers of \eqref{eq:N_p(c)}, and let
\begin{equation} \label{eq:sub-sequence_of_sets}
S_p(c):= \{ i\in\{1,\dots,p\}\, :\, \Sigma_p(j_p(c), i) > c \}.% \quad\quad \text{for all }k\in S_p(c).
\end{equation}
Observe that $|S_p(c)| = N_p(c)+1 \to \infty$, as $p\to\infty$ 
(note $j_p(c) \in S_{p}(c)$).

Applying Lemma \ref{lemma:positive-correlation} (see Section \ref{subsec:Ramsey} below) to the set of random variables indexed by $S_p(c)$, we conclude, for $N_p(c) \ge 2^{2\lceil2/c^2\rceil+4}$, there must be a further subset 
\begin{equation} \label{eq:further_sub-sequence_of_sets}
  K_p(c) \subseteq S_p(c),
\end{equation}
of cardinality 
\begin{equation} \label{eq:further_sub-sequence_of_sets_size}
k_p(c) := \left|K_p(c)\right| \ge \log_2{\sqrt{N_p(c)}},
\end{equation}
such that all pairwise correlations of the random variables indexed by $K_p(c)$ are greater than $c^2/2$.
Since the sequence $N_p(c)\to\infty$, by \eqref{eq:further_sub-sequence_of_sets_size}, we have $k_p(c)\to\infty$ as $p\to\infty$.

Therefore, we have identified a sequence of subsets $K_p(c)\subseteq\{1,\ldots,p\}$ with the following two properties:
\begin{enumerate}
  \item $k_p(c) := \left|K_p(c)\right| \to \infty$, as $p\to\infty$, and
  \item For all $i,j\in K_p(c)$, we have
  \begin{equation} \label{eq:further_sub-sequence_of_sets_cor}
    \Sigma_p(i,j) > c^2/2.
  \end{equation}
\end{enumerate}
Without loss of generality, we may assume $K_p(c) = \{1,\ldots,k_p(c)\} \subseteq \{1,\ldots,p\}$, upon re-labeling of the coordinates. 

Now consider a Gaussian sequence $\epsilon^* = \{\epsilon^*(j),\;j = 1,2,\ldots\}$, independent of ${\cal E}$, defined as follows:
$$
\epsilon^*(j):= Z \left(c/\sqrt{2}\right) + Z(j) \sqrt{1-{c^2}/{2}}, \quad j = 1, 2, \ldots,
$$ 
where $Z$ and $Z(j), j = 1, 2, \ldots$ are independent standard normal random variables. 
Hence,
\begin{equation} \label{eq:Slepian-conclusion-condition-1}
    {\rm Var}(\epsilon^*(j)) = 1 = {\rm Var}(\epsilon_p(j)),
\end{equation}
and
\begin{equation} \label{eq:Slepian-conclusion-condition-2}
    \cov(\epsilon^*(i),\epsilon^*(j)) = \frac{c^2}{2} \le \cov(\epsilon_p(i),\epsilon_p(j)),
\end{equation}
for all $p$, and all $i\neq j$, $i,j\in K_p(c)$.
Thus we have, as $p\to\infty$, 
\begin{equation} \label{eq:!UDD=>subsequence-fail}
    \frac{1}{u_{k_p(c)}} \max_{j\in K_p(c)} \epsilon^*(j) = \frac{c/\sqrt{2}}{u_{k_p(c)}}Z + \frac{\sqrt{1-c^2/2}}{u_{k_p(c)}} \max_{j\in K_p(c)} Z(j) \stackrel{\mathbb P}{\to} \sqrt{1-\frac{c^2}{2}},
\end{equation}
where the convergence in probability follows from Proposition \ref{prop:rapid-varying-tails} part \ref{prop:rapid-varying-tails_part-ii}.
%The fact that the last limit is strictly less than $1$, together with Relation \eqref{eq:Slepian-conclusion}, shows that \eqref{eq:URS-condition} is impossible, for $S_p:=K_p(c)$.

Relations \eqref{eq:Slepian-conclusion-condition-1} and \eqref{eq:Slepian-conclusion-condition-2}, by Slepian's Lemma \cite{slepian1962one}, also imply,
\begin{equation}\label{eq:Slepian-conclusion}
  \frac{1}{u_{k_p(c)}} \max_{j\in K_p(c)} \epsilon^*(j) \stackrel{d}{\ge} \frac{1}{u_{k_p(c)}} \max_{j\in K_p(c)} \epsilon_p(j).
\end{equation}
Therefore, by \eqref{eq:Slepian-conclusion} and \eqref{eq:!UDD=>subsequence-fail}, for all $\sqrt{1-c^2/2} \le \delta < 1$, we have,
$$
\P\left[\frac{1}{u_{k_p(c)}} \max_{j\in K_p(c)} \epsilon_p(j) < \delta \right] \to 1 \quad\mbox{as  }p\to\infty.
$$
This contradicts the definition of URS (with the particular choice of $S_p:=K_p(c)$), and the proof of the `only if' part is complete.

\subsubsection{{\bf UDD implies URS (`if' part of Theorem \ref{thm:Gaussian-weak-dependence})}} \label{subsubsec:UDD=>URS}

Recall that our objective is to show \eqref{eq:URS-condition}. 
We will do so in two stages; namely, we will prove that for all $\delta>0$, we have 
\begin{equation} \label{eq:URS-condition-upper-side}
    \P\left[\frac{M_{S_p}}{u_{|S_p|}} > 1+\delta\right] \to 0,
\end{equation}
and
\begin{equation} \label{eq:URS-condition-lower-side}
    \P\left[\frac{M_{S_p}}{u_{|S_p|}} < 1-\delta\right] \to 0,
\end{equation}
for any sequence of subsets $S_p$ such that $|S_p|\to\infty$.
Although the first step \eqref{eq:URS-condition-upper-side} was already shown in Proposition \ref{prop:rapid-varying-tails}, regardless of the dependence structure, we provide in this section a more refined result. 
Specifically, the following result states that for the AGG model, the constant $\delta$ in Proposition \ref{prop:rapid-varying-tails} can be replaced by a vanishing sequence $c_p\to 0$.

\begin{lemma}[Upper tails of AGG maxima] \label{lemma:AGG-maxima-upper-tails}
Let ${\cal E}$ be an array with marginal distribution $F\in\text{AGG}(\nu)$, $\nu>0$. If we pick
\begin{equation} \label{eq:choice-of-c_p}
    c_p = \frac{u_{p\log{p}}}{u_p} - 1,    
\end{equation} 
where $u_p = F^{\leftarrow}(1-1/p)$, then we have $c_p>0$, $c_p\to 0$, and
\begin{equation} \label{eq:AGG-max-upper-bound}
    \P\left[\frac{M_p}{u_p}-(1+c_p) > 0\right] \to 0.
 \end{equation}
\end{lemma}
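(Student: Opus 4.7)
The plan is to exploit the explicit AGG quantile formula $u_p \sim (\nu\log p)^{1/\nu}$ from Proposition~\ref{prop:quantile} together with the union-bound argument used in the proof of Proposition~\ref{prop:rapid-varying-tails}. The key observation enabling the quantitative refinement (a vanishing $c_p$ in place of a fixed $\delta>0$) is that, by the very definition of $c_p$ in \eqref{eq:choice-of-c_p}, one has the identity $(1+c_p)u_p = u_{p\log p}$. Consequently the event $\{M_p > (1+c_p)u_p\}$ coincides with $\{M_p > u_{p\log p}\}$, and the problem reduces to controlling a tail probability at a slightly inflated quantile.

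First I would dispatch the two analytic claims about $c_p$. Positivity is immediate: for $F\in\text{AGG}(\nu)$ the survival function $\overline{F}$ is strictly decreasing for all sufficiently large $x$, so $1 - 1/(p\log p) > 1 - 1/p$ yields $u_{p\log p} > u_p$ once $p$ is large. For $c_p\to 0$, Proposition~\ref{prop:quantile} gives
$$\frac{u_{p\log p}}{u_p} = \frac{\bigl(\nu\log(p\log p)\bigr)^{1/\nu}(1+o(1))}{\bigl(\nu\log p\bigr)^{1/\nu}(1+o(1))} = \Bigl(1 + \frac{\log\log p}{\log p}\Bigr)^{1/\nu}(1+o(1)) \longrightarrow 1.$$

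For the main bound, the identity $(1+c_p)u_p = u_{p\log p}$ together with the union bound (which, crucially, requires no assumption on the dependence structure of $\mathcal{E}$) and the defining inequality $\overline{F}(u_q)\le 1/q$ (which follows from right-continuity of $F$ and $u_q = \inf\{x: F(x)\ge 1-1/q\}$) yields
$$\mathbb{P}\bigl[M_p > (1+c_p)u_p\bigr] = \mathbb{P}\bigl[M_p > u_{p\log p}\bigr] \le p\,\overline{F}(u_{p\log p}) \le \frac{p}{p\log p} = \frac{1}{\log p} \longrightarrow 0.$$

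There is no real obstacle: the lemma is a quantitative sharpening of the upper-tail half of Proposition~\ref{prop:rapid-varying-tails}, where the arbitrary fixed $\delta>0$ is replaced by a sequence tending to zero, made possible by the AGG-specific asymptotics for $u_p$ which allow one to compute how rapidly the quantile inflates when we enlarge the index from $p$ to $p\log p$. The only minor technical point is the verification of the quantile inequality $\overline{F}(u_q)\le 1/q$, which is standard.
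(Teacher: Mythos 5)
Your proof is correct and follows essentially the same route as the paper's: the identity $(1+c_p)u_p = u_{p\log p}$, the union bound (valid under arbitrary dependence), and the quantile inequality $\overline{F}(u_q)\le 1/q$ together give the $1/\log p$ bound, while the AGG quantile asymptotics $u_p\sim(\nu\log p)^{1/\nu}$ give $c_p\to 0$. The only addition you make is an explicit (and sound) justification of $c_p>0$, which the paper leaves implicit.
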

The proof can be found in Appendix \ref{subsec:bounding-upper-tails-of-maxima}.

Since Lemma \ref{lemma:AGG-maxima-upper-tails} holds regardless of the dependence structure, the same conclusions hold if one replaces $M_p$ by $M_{S_p} = \max_{j\in S_p}\epsilon(j)$ and $p$ by $q = q(p)=|S_p|$, where $S_p$ is any sequence of sets such that $q \equiv |S_p| \to \infty$.
This entails \eqref{eq:URS-condition-upper-side}.

On the other hand, the proof of \eqref{eq:URS-condition-lower-side} uses a more elaborate argument based on the Sudakov-Fernique bound.
We proceed by first bounding the probability by an expectation. 
For all $\delta>0$, we have
\begin{align}
    \P\left[\frac{M_{S_p}}{u_q}<1-\delta\right] 
        &= \P\left[-\left(\frac{M_{S_p}}{u_q} - (1+c_q)\right) > \delta + c_q\right] \nonumber \\
        %&\le \P\left[\max{\left\{-\left(\frac{M_{S_p}}{u_q} - (1+c_q)\right),0\right\}} > \delta + c_q\right] \nonumber \\
        &\le \P\left[\left(\frac{M_{S_p}}{u_q} - (1+c_q)\right)_->\delta+c_q\right] \nonumber \\
        &\le \frac{1}{\delta + c_q}\E\left[\left(\frac{M_{S_p}}{u_q} - (1+c_q)\right)_-\right], \label{eq:Gaussian-maxima-lower-expectation-bound}
\end{align}
where $(x)_-:=\max\{-x,0\}$ and the last line follows from the Markov inequality.
The next result shows that the upper bound in \eqref{eq:Gaussian-maxima-lower-expectation-bound} vanishes.
\begin{lemma} \label{lemma:Gaussian-maxima-lower-expectation}
  Let ${\cal E}$ be a Gaussian UDD  array  and 
  $S_p\subseteq\{1,\ldots,p\}$ be an arbitrary sequence of sets 
  such that $q = q(p) = |S_p|\to\infty$.  Then, for $M_{S_p}:= \max_{j\in S_p} \epsilon_p(j)$ and $c_q$ as in \eqref{eq:choice-of-c_p}, we have
  \begin{equation} \label{eq:Gaussian-maxima-lower-expectation}
    \E\left[\left(\frac{M_{S_p}}{u_q} - (1+c_q)\right)_-\right] \to 0,\ \ \quad \mbox{ as }p\to \infty.
  \end{equation}
\end{lemma}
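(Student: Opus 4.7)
The plan is to reduce the statement to a lower-bound-in-probability for $M_{S_p}/u_q$, and then deduce control on the negative part via Gaussian tail estimates. The in-probability bound will itself be proved in two stages: first establish $\liminf_p \E[M_{S_p}]/u_q \ge 1$ via a Sudakov--Fernique comparison, then upgrade this to convergence in probability using Gaussian concentration.

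For the expectation lower bound, fix an arbitrary $\delta\in(0,1)$. The UDD hypothesis says each coordinate has at most $N(\delta)$ partners with covariance exceeding $\delta$, so a greedy extraction (pick a coordinate in $S_p$, delete it together with its $\delta$-neighbours, repeat) produces $T_p\subseteq S_p$ with all pairwise covariances $\le\delta$ and $|T_p|\ge |S_p|/(N(\delta)+1)$. Define the comparison process $Y_j := \sqrt{\delta}\,Z_0 + \sqrt{1-\delta}\,Z_j$, $j\in T_p$, with $Z_0, (Z_j)$ i.i.d.\ standard normal. Then $\mathrm{Var}(Y_j)=1=\mathrm{Var}(\epsilon_p(j))$ and $\mathrm{Cov}(Y_i,Y_j)=\delta\ge\mathrm{Cov}(\epsilon_p(i),\epsilon_p(j))$ for $i\ne j$ in $T_p$, so $\E[(Y_i-Y_j)^2]\le \E[(\epsilon_p(i)-\epsilon_p(j))^2]$. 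Sudakov--Fernique then gives
\[
\E[M_{S_p}] \;\ge\; \E\!\Big[\max_{j\in T_p}\epsilon_p(j)\Big] \;\ge\; \E\!\Big[\max_{j\in T_p} Y_j\Big] \;=\; \sqrt{1-\delta}\,\E\!\Big[\max_{j\in T_p} Z_j\Big].
\]
Since $|T_p|\to\infty$ at the same logarithmic rate as $q$, the classical iid Gaussian maximum asymptotics yield $\E[\max_{j\in T_p} Z_j] \sim \sqrt{2\log|T_p|}\sim u_q$. Hence $\liminf_p \E[M_{S_p}]/u_q \ge \sqrt{1-\delta}$, and since $\delta$ was arbitrary, $\liminf_p \E[M_{S_p}]/u_q \ge 1$.

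Next, because $M_{S_p}$ is a $1$-Lipschitz function of the underlying standard Gaussian coordinates (via Cholesky), Borell--TIS concentration gives $\P[M_{S_p} \le \E[M_{S_p}] - t] \le e^{-t^2/2}$ for every $t>0$. Combined with the previous step, this implies, for every $\eta>0$,
\[
\P\!\left[\frac{M_{S_p}}{u_q} < 1-\eta\right] \;\longrightarrow\; 0 \quad\text{as }p\to\infty.
\]

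Finally, write the quantity of interest using the layer-cake formula:
\[
\E\!\left[\Big(\tfrac{M_{S_p}}{u_q}-(1+c_q)\Big)_{-}\right] \;=\; \int_0^\infty \P\!\left[\tfrac{M_{S_p}}{u_q} < 1+c_q - t\right] dt.
\]
Split the integral at $t=\eta$ and $t=2$. The piece on $(0,\eta]$ is bounded by $\eta$. On $(\eta,2]$, since $c_q\to 0$, for large $p$ the integrand is bounded by $\P[M_{S_p}/u_q < 1-\eta/2]$, which tends to $0$ by the previous display (and the interval has bounded length). On $(2,\infty)$, using $M_{S_p}\ge \epsilon_p(j_0)$ for any fixed $j_0\in S_p$, the integrand is bounded by $\Phi((1+c_q-t)u_q)\le \Phi(-(t-3/2)u_q)$ for $p$ large, whose integral over $t\in(2,\infty)$ is, after the substitution $s=(t-3/2)u_q$, of order $u_q^{-1}\int_{u_q/2}^\infty e^{-s^2/2}ds = O(u_q^{-2} e^{-u_q^2/8})\to 0$. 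Letting $\eta\downarrow 0$ finishes the proof.

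The principal obstacle is Step 2: extracting the constant $\sqrt{1-\delta}$ (rather than something strictly smaller) and then closing the gap via $\delta\downarrow 0$. UDD is precisely what makes the greedy extraction lose only a constant factor in cardinality, which is invisible at the $\log$-scale and therefore preserves the asymptotic $u_q$. The argument manifestly breaks for non-UDD arrays, in agreement with the `only if' half of Theorem \ref{thm:Gaussian-weak-dependence}.
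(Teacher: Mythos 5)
Your proposal is correct and shares the paper's key ingredient --- the Sudakov--Fernique comparison built on a greedy $\delta$-packing of $S_p$ (your $T_p$, the paper's $\mathcal{N}$), giving $\liminf_p \E[M_{S_p}]/u_q \ge 1$; this is essentially the paper's Proposition~\ref{prop:Gaussian-maxima-expectation-lower-bound}. Where you diverge is in closing the argument. The paper is purely algebraic from there: it writes $\E[X_-] = \E[X_+] - \E[X]$ with $X = M_{S_p}/u_q - (1+c_q)$, uses the Sudakov--Fernique bound to get $\liminf \E[X] \ge 0$, and handles $\limsup \E[X_+] = 0$ via a separate uniform-integrability argument for the AGG maxima (Lemma~\ref{lemma:AGG-max-uniform-integrability} and Corollary~\ref{cor:AGG-max-upper-bound-expectation}). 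You instead invoke Borell--TIS concentration to upgrade the expectation lower bound directly to a tail estimate $\P[M_{S_p}/u_q < 1-\eta]\to 0$, then integrate that tail via the layer-cake identity and a crude one-coordinate Gaussian tail bound for $t>2$. Your route sidesteps the uniform-integrability machinery entirely, which makes it shorter and arguably more transparent; its cost is that it hardwires the Gaussian structure through Borell--TIS, whereas the paper's positive-/negative-part identity and the upper-tail control are available for any AGG marginal and only localize the Gaussian assumption inside the Sudakov--Fernique step, making the logical dependence on Gaussianity easier to isolate (as the paper's Remark after the proof points out). Both proofs are valid and rest on the same central Sudakov--Fernique packing argument.
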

The proof of the lemma is given in Appendix \ref{subsec:bounding-lower-tails-of-maxima}.

Going back to the proof of Theorem \ref{thm:Gaussian-weak-dependence}, we observe that Relations \eqref{eq:Gaussian-maxima-lower-expectation-bound} and \eqref{eq:Gaussian-maxima-lower-expectation} imply \eqref{eq:URS-condition-lower-side}, which completes the proof of the `if' part. \qed 

\begin{remark} Only the Sudakov-Fernique minorization argument used in the proof of Lemma \ref{lemma:Gaussian-maxima-lower-expectation}, relies on the Gaussian assumption. We expect the techniques and results here to be useful in extending Theorem \ref{thm:Gaussian-weak-dependence} to more general class of distributions, say, the AGG model.
\end{remark}

\subsection{Ramsey's Coloring Theorem and structure of correlation matrices} \label{subsec:Ramsey}

We provide a general result on the structure of arbitrary correlation matrices in this section, which may be of independent interest. 
Its proof uses the Ramsey Theorem from graph theory, which we briefly review next.

Given any integer $k\ge 1$, there is always an integer $R(k,k)$ called the {\em Ramsey number}:
\begin{equation}\label{eq:Ramsey-number}
k\le R(k,k)\le \binom{2k-2}{k-1}
\end{equation}
such that the following property holds:
every undirected graph with at least $R(k,k)$ vertices will contain {\em either} a clique of size $k$, or an {\em independent set} of $k$ nodes. 
Recall that a clique is a complete sub-graph where all pairs of nodes are connected, and an independent set is a set of nodes where no two nodes are connected.

This result is a consequence of the celebrated work of \citet{ramsey2009problem}, which 
gave birth to Ramsey Theory (see e.g., \citet{conlon2015recent}).  
The Ramsey Theorem and the upper bound \eqref{eq:Ramsey-number} (established first in \cite{erdos1935combinatorial}) are at the heart of the proof of the following result.
%An excellent introduction to Ramsey theory is given in \url{http://math.mit.edu/~fox/MAT307-lecture05.pdf}.

\begin{proposition} \label{prop:lower-bound-correlation-Ramsey}
  Fix $\gamma\in(0,1)$ and let $P = \left(\rho(i,j)\right)_{n\times n}$ be an arbitrary correlation
  matrix. If 
  \begin{equation}\label{eq:Ramsey-the-k-def}
   k:= \lfloor \log_2({n})/2 \rfloor  \ge \lceil 1/\gamma \rceil + 1,
  \end{equation}
  then there is a set of $k$ indices $K = \{l_1, \ldots, l_k\}\subseteq \{1,\ldots,n\}$ 
  such that 
  \begin{equation} \label{eq:lower-bound-correlation-Ramsey}
      \rho(i,j) \ge -\gamma, \mbox{ for all } i,j\in K.
  \end{equation}
\end{proposition}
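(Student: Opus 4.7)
The plan is to set up a two-colouring of the pairs from $\{1,\dots,n\}$ (i.e., an auxiliary graph) on which Ramsey's theorem applies, and then rule out one of the two outcomes using positive semi-definiteness of $P$. Concretely, I would define an undirected graph $G$ on the vertex set $\{1,\dots,n\}$ by putting an edge between $i$ and $j$ exactly when $\rho(i,j) < -\gamma$. An independent set in $G$ of size $k$ is then a subset $K$ with $\rho(i,j) \ge -\gamma$ for all distinct $i,j\in K$, which (together with $\rho(i,i)=1\ge -\gamma$) is exactly the conclusion \eqref{eq:lower-bound-correlation-Ramsey}. So it suffices to show that, under the hypothesis $k = \lfloor \log_2(n)/2\rfloor \ge \lceil 1/\gamma\rceil + 1$, the graph $G$ admits an independent set of size $k$.

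First I would verify that $n \ge R(k,k)$, so that Ramsey's theorem produces either a $k$-clique or a $k$-independent set in $G$. Using the Erd\H{o}s--Szekeres bound \eqref{eq:Ramsey-number},
\[
R(k,k) \le \binom{2k-2}{k-1} \le 2^{2k-2} = 4^{k-1},
\]
while the definition of $k$ gives $2k \le \log_2 n$, i.e., $n \ge 4^{k} > 4^{k-1} \ge R(k,k)$. Hence Ramsey's theorem applies.

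Next I would rule out the $k$-clique alternative. Suppose for contradiction that there exists $K\subseteq\{1,\dots,n\}$ with $|K|=k$ such that $\rho(i,j)<-\gamma$ for every pair of distinct $i,j\in K$. The principal submatrix $P_K$ is positive semi-definite (as a principal submatrix of the correlation matrix $P$), so $\mathbf{1}^\top P_K \mathbf{1}\ge 0$. On the other hand,
\[
\mathbf{1}^\top P_K\mathbf{1} = k + \sum_{\substack{i,j\in K\\ i\ne j}} \rho(i,j) < k - k(k-1)\gamma = k\bigl(1 - (k-1)\gamma\bigr).
\]
The hypothesis $k \ge \lceil 1/\gamma\rceil + 1$ yields $k-1 \ge \lceil 1/\gamma\rceil \ge 1/\gamma$, hence $(k-1)\gamma \ge 1$, so the right-hand side is $\le 0$. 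This contradicts $\mathbf{1}^\top P_K\mathbf{1}\ge 0$, ruling out the clique case.

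Therefore the Ramsey dichotomy forces the independent-set alternative, producing the desired set $K$ of cardinality $k$ satisfying \eqref{eq:lower-bound-correlation-Ramsey}. The main conceptual step is the reduction to Ramsey's theorem; once one sees that a ``large negative correlation'' clique is impossible for a PSD matrix by testing against the all-ones vector, everything else is bookkeeping. The only mildly delicate point is calibrating the threshold so that $(k-1)\gamma \ge 1$ exactly under the hypothesis, which is handled by the choice $\lceil 1/\gamma\rceil + 1$.
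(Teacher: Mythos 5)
Your proof is correct and follows the same Ramsey-plus-PSD strategy as the paper, with two minor but genuine simplifications worth pointing out. First, you take the \emph{complement} convention for the auxiliary graph (edge iff $\rho(i,j)<-\gamma$), so the desired set is an independent set rather than a clique; this is cosmetic, since Ramsey's theorem is symmetric under complementation, but arguably reads more naturally. Second, and more substantively, you replace the paper's verification that $R(k,k)\le n$: the paper invokes Robbins's refinement of Stirling's formula to bound $\binom{2k-2}{k-1}<2^{2k}$, whereas you observe directly that $\binom{2k-2}{k-1}\le 2^{2k-2}=4^{k-1}<4^{k}\le n$, since a single binomial coefficient cannot exceed the sum of the full row. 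This is both shorter and gives a (marginally) sharper bound, making the Stirling machinery unnecessary. The PSD argument ruling out the bad alternative, via $\mathbf{1}^{\top}P_{K}\mathbf{1}\ge 0$, is precisely the paper's variance computation $\var(\sum_{i\in K}Z_i)\ge 0$ written in matrix form. One small thing to keep explicit: the bound $R(k,k)\le\binom{2k-2}{k-1}$ requires $k\ge 2$, which indeed holds here since $k\ge\lceil 1/\gamma\rceil+1\ge 2$ for $\gamma\in(0,1)$; you use this implicitly but it costs nothing to say.
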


\begin{proof}[Proof of Proposition \ref{prop:lower-bound-correlation-Ramsey}]
By using \eqref{eq:Ramsey-number} and a refinement of the Stirling's formula, 
we will show at the end of the proof that for $k \le \log_2({n})/2$, we have 
\begin{equation}\label{eq:Ramsey-bounds}
 R(k,k) \le n,
\end{equation}
where $R(k,k)$ is the Ramsey number.  

Now, construct a graph with vertices $\{1,\dots,n\}$ such that there is an edge between nodes $i$ and $j$ if and only if $\rho(i,j) > -\gamma$. 
In view of \eqref{eq:Ramsey-bounds} and Ramsey's theorem (see e.g., Theorem 1 in \cite{fox2009lecture} or \cite{conlon2015recent} for a recent survey on Ramsey theory), there is a subset of $k$ nodes $K =\{l_1,\dots,l_k\}$, which is either a {\em complete graph} or an {\em independent set}.

If $K$ is a complete graph, then by our construction of the graph, Relation \eqref{eq:lower-bound-correlation-Ramsey} holds. 

Now, suppose that $K$ is a set of independent nodes.  This means, again by the construction of our graph, that
$$
\rho(i,j) < -\gamma,\quad\mbox{for all }i\not= j\in K.
$$
Let $Z_i,\ i \in K$ be zero-mean random variables such that 
$\rho(i,j) = E [Z_iZ_j]$. Observe that
\begin{equation} \label{eq:Ramsey-proof-contradiction}
    \var\left( \sum_{i\in K} Z_i\right) 
    = \sum_{i\in K} \var(Z_i) + \sum_{\substack{i\not=j\\i,j \in K}} \cov(Z_i, Z_j) 
    <  k - k(k-1)\gamma,
\end{equation}
since $\var(Z_i)=1$ and $\rho(i,j)<-\gamma$ for $i\neq j$.
By our assumption, $k\ge \left(\lceil 1/\gamma \rceil + 1\right)$, or equivalently, $(k-1) \ge 1/\gamma$, the variance in \eqref{eq:Ramsey-proof-contradiction} is negative. 
This is a contradiction showing that there are no independent sets $K$ with cardinality $k$.

To complete the proof, it remains to show that Relation \eqref{eq:Ramsey-bounds} holds.
In view of the upper bound on the Ramsey numbers \eqref{eq:Ramsey-number}, it 
is enough to show that $k \le \log_2(\sqrt{n})$ implies
$$
 \binom{2k-2}{k-1} \le n.
$$
This follows from a refinement of the Stirling formula, due to \citet{robbins1955remark}:
$$
 \sqrt{2\pi} m^{m+1/2} e^{-m} e^{\frac{1}{(12 m +1)}} \le  m! \le \sqrt{2\pi} m^{m+1/2} e^{-m} 
 e^{\frac{1}{12 m}}.
$$
Indeed, letting $\widetilde k:= k-1$, and applying the above upper and lower bounds 
to the  terms $(2\widetilde k)!$ and $\widetilde k!$, respectively, we obtain:
\begin{align*}
\binom{2k-2}{k-1} \equiv \frac{(2\widetilde k)!}{ (\widetilde k!)^2 }
\le \frac{2^{2\widetilde k}}{\sqrt{\pi \widetilde k}}\exp\left\{ \frac{1}{24 \widetilde k} -
\frac{2}{ 12 \widetilde k +1}\right\} < 2^{2 k}
\end{align*}
where the last two inequalities follow by simply dropping positive factors less than $1$.
Since $2k \le \log_2(n)$, the above bound implies Relation \eqref{eq:Ramsey-bounds} 
and the proof is complete.
\end{proof}

Using Proposition \ref{prop:lower-bound-correlation-Ramsey}, we establish an auxiliary result used in the proof of Theorem \ref{thm:Gaussian-weak-dependence}.

\begin{lemma} \label{lemma:positive-correlation}
  Let $c\in(0,1)$, and $P = \left(\rho(i,j)\right)_{(n+1)\times(n+1)}$ be a correlation matrix such that \begin{equation} \label{eq:positive-correlation-lemma-condition}
      \rho(1,j) > c \quad \mbox{for all } j = 1,\ldots,n+1.
  \end{equation}
  If $n \ge 2^{2\lceil2/c^2\rceil+4}$, then there is a set of indices $K = \{l_1, \ldots, l_k\}\subseteq \{2,\ldots,n+1\}$ of cardinality $k = |K| = \lfloor\log_2{\sqrt{n}}\rfloor$, such that 
  \begin{equation} \label{eq:positive-correltation-lemma-conclusion}
      \rho(i,j) > \frac{c^2}{2} \quad\mbox{for all } i,j\in K.
  \end{equation}
  That is, all entries of the $k\times k$ sub-correlation matrix $P_K:=\left(\rho(i,j)\right)_{i,j\in K}$ are larger than $c^2/2$.
\end{lemma}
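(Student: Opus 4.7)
The plan is to exploit the geometry implicit in the hypothesis: if every $Z_j$ is highly correlated with a single $Z_1$, then after projecting out the $Z_1$-component, the residuals have much smaller norm, so their pairwise correlations cannot be very negative, and the positive tilt $\rho(1,j) > c$ will dominate. This reduces the lemma to a direct application of Proposition~\ref{prop:lower-bound-correlation-Ramsey}.

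Concretely, I would realize $P$ as the Gram matrix of unit-variance random variables $Z_1, \ldots, Z_{n+1}$ on some probability space (e.g., jointly Gaussian). For each $j \in \{2, \ldots, n+1\}$, write the orthogonal decomposition
\[
Z_j = a_j Z_1 + b_j W_j, \qquad a_j := \rho(1,j) \in (c, 1], \qquad b_j := \sqrt{1 - a_j^2} \in [0, \sqrt{1-c^2}),
\]
where $W_j$ is a unit-variance random variable orthogonal to $Z_1$ (in the degenerate case $b_j = 0$, we have $Z_j = Z_1$ and such indices can be treated trivially). Set $\widetilde{\rho}(i,j) := \mathbb{E}[W_i W_j]$, so that
\[
\rho(i,j) = a_i a_j + b_i b_j \widetilde{\rho}(i,j), \qquad i, j \in \{2, \ldots, n+1\}.
\]
The matrix $\widetilde{P} = (\widetilde{\rho}(i,j))_{i,j=2}^{n+1}$ is an $n \times n$ correlation matrix.

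Now apply Proposition~\ref{prop:lower-bound-correlation-Ramsey} to $\widetilde{P}$ with $\gamma := c^2/2$. The hypothesis $n \ge 2^{2\lceil 2/c^2 \rceil + 4}$ yields $\lfloor \log_2(n)/2 \rfloor \ge \lceil 2/c^2 \rceil + 2 \ge \lceil 1/\gamma \rceil + 1$, so the proposition produces a set $K \subseteq \{2, \ldots, n+1\}$ with $|K| = \lfloor \log_2 \sqrt{n} \rfloor$ and $\widetilde{\rho}(i,j) \ge -c^2/2$ for all $i, j \in K$. For any such $i, j \in K$, using $a_i a_j > c^2$, $b_i b_j \le 1$, and $\widetilde{\rho}(i,j) \ge -c^2/2$,
\[
\rho(i,j) = a_i a_j + b_i b_j \widetilde{\rho}(i,j) > c^2 - \frac{c^2}{2} = \frac{c^2}{2},
\]
which gives \eqref{eq:positive-correltation-lemma-conclusion}.

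The only nontrivial ingredient is the ``project out $Z_1$'' step; once the problem is recast in terms of the residual correlation matrix $\widetilde{P}$, Proposition~\ref{prop:lower-bound-correlation-Ramsey} handles everything. I do not foresee an obstacle beyond a careful bookkeeping of the cutoff: checking that $\gamma = c^2/2$ is small enough for the bound $c^2 - \gamma b_i b_j > c^2/2$, and that the numerical threshold $n \ge 2^{2\lceil 2/c^2 \rceil + 4}$ is exactly what is needed to meet the hypothesis $k \ge \lceil 1/\gamma \rceil + 1$ of the proposition.
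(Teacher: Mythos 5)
Your proof is correct and follows the same strategy as the paper: decompose each $Z_j$ into its $Z_1$-component plus a unit-variance orthogonal residual (your $W_j$, the paper's $R(j)$), apply Proposition~\ref{prop:lower-bound-correlation-Ramsey} with $\gamma = c^2/2$ to the residual correlation matrix, and combine $a_ia_j > c^2$ with $b_ib_j\widetilde{\rho}(i,j) \ge -c^2/2$ to conclude. The only cosmetic difference is notation and your slightly more explicit bookkeeping of the bound $b_ib_j \le 1$; the paper instead writes the bound as $\rho(i,j) \ge c^2 + \min\{\cov(R(i),R(j)),0\}$, which is the same estimate.
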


\begin{proof}[Proof of Lemma \ref{lemma:positive-correlation}]
    Let $Z_1, \ldots, Z_{n+1}$ be random variables with covariance matrix $P$.
    Denote $\rho_j = \rho(1,j)$ and define 
    \begin{equation}
      R(j) = 
      \begin{cases}
        \frac{1}{\sqrt{1-\rho_j^2}}\left(Z(j) - \rho_j Z(1)\right), &\mbox{if } \rho_j<1,\\
        R^* &\mbox{if } \rho_j=1,
      \end{cases}
    \end{equation}
    where $R^*$ is an arbitrary zero-mean, unit-variance random variable.
    It is easy to see that $\var(R(j)) = 1$, and
    \begin{align*}
    \cov\left(Z(i), Z(j)\right) &= \cov\left(\rho_i Z(1) + \sqrt{1-\rho_i^2} R(i), \; \rho_j Z(1) + \sqrt{1-\rho_j^2} R(j)\right) \\
        &= \rho_i\rho_j + \sqrt{1-\rho_i^2}\sqrt{1-\rho_j^2} \;\cov\left(R(i), R(j)\right) \\
        &\ge c^2 + \min\left\{\cov\left(R(i), R(j)\right), 0\right\}.
    \end{align*}
    
    Therefore, Relation \eqref{eq:positive-correltation-lemma-conclusion} would hold if we can find a set of indices $K = \{l_1,\ldots,l_k\}$ such that $\cov\left(R(i),R(j)\right)>-c^2/2$ for all $i,j\in K$, where $k=|K|=\lfloor\log_2\sqrt{n}\rfloor$.
    This, however, follows from Proposition \ref{prop:lower-bound-correlation-Ramsey} applied to $\left(R(j)\right)_{j=2}^{n+1}$ with $\gamma = c^2/2$, provided that 
    $$
    k = \lfloor\log_2\sqrt{n}\rfloor \ge \lceil 2/c^2 \rceil + 1.
    $$
    The last inequality indeed follows form the assumption that $n \ge 2^{2\lceil2/c^2\rceil+4}$.
\end{proof}

\section{Necessary conditions for exact support recovery}
\label{sec:necessary}

With the preparations from Section \ref{subsec:URS}, we are ready to state the necessary conditions for exact support recovery for thresholding procedures. 
It turns out that under the general dependence structure characterized by URS, the boundary \eqref{eq:strong-classification-boundary} is tight.

\begin{theorem} \label{thm:necessary}
    Let ${\cal E}$ be a triangular array with AGG marginal $F$ with parameter $\nu > 0$, and the signals $\mu$ be as described in Theorem \ref{thm:sufficient}. 
    Assume further that the errors ${\cal E}$ have uniform relatively stable maxima and minima, i.e., ${\cal E}\in U(F)$, and $(-{\cal E}) = \{-\epsilon_{p}(j)\} \in U(F)$.
    If 
    \begin{equation} \label{eq:signal-below-boundary}
        \overline{r} < g(\beta) = \left(1+(1-\beta)^{1/\nu}\right)^\nu,
    \end{equation}
    then no thresholding procedure can succeed in the exact support recovery problem.
    That is, for any $\widehat{S}_p = \{j\in[p]\;|\;x(j)\ge t_p(x)\}$, where the threshold $t_p(x)$ may depend on the data, 
    \begin{equation} \label{eq:classification-impossible-dependent}
        \lim_{p\to\infty}\mathbb P[\widehat{S}_p = S_p] = 0. 
    \end{equation}
\end{theorem}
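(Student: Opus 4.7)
The plan is to exploit the fact that, for a thresholding procedure to achieve exact recovery, there must exist a real number separating the observations on $S_p$ from those on $S_p^c$. More precisely, if $\widehat S_p = \{ j : x(j) > t_p(x)\}$ equals $S_p$, then
\[
\min_{j\in S_p} x(j) \;>\; t_p(x) \;\ge\; \max_{j\in S_p^c} x(j),
\]
which does not involve $t_p$ at all. So
\[
\P[\widehat S_p = S_p] \;\le\; \P\Bigl[\min_{j\in S_p} x(j) > \max_{j\in S_p^c} x(j)\Bigr].
\]
Since $\mu(j)\le \overline{\Delta}$ on $S_p$ and $\mu(j)=0$ on $S_p^c$, the right-hand event is contained in
\[
B_p := \Bigl\{ \max_{j\in S_p^c}\epsilon(j) + \max_{j\in S_p}\bigl(-\epsilon(j)\bigr) < \overline{\Delta}\Bigr\}.
\]
It therefore suffices to show $\P[B_p]\to 0$.

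Next I would apply the URS hypothesis simultaneously to the complementary subsets $S_p^c$ (under ${\cal E}$) and $S_p$ (under $-{\cal E}$). For $\beta\in(0,1)$ both $|S_p^c|=p-s$ and $|S_p|=s=\lfloor p^{1-\beta}\rfloor$ tend to infinity, so Definition \ref{def:URS} gives
\[
\frac{\max_{j\in S_p^c}\epsilon(j)}{u_{|S_p^c|}} \;\xrightarrow{\P}\; 1,\qquad \frac{\max_{j\in S_p}(-\epsilon(j))}{u_{|S_p|}}\;\xrightarrow{\P}\; 1.
\]
Using the AGG quantile asymptotics $u_q\sim (\nu\log q)^{1/\nu}$ from Proposition \ref{prop:quantile}, we get $u_{|S_p^c|}\sim (\nu\log p)^{1/\nu}$ and $u_{|S_p|}\sim (\nu(1-\beta)\log p)^{1/\nu}$. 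Dividing both sides of the inequality defining $B_p$ by $(\nu\log p)^{1/\nu}$ and recalling $\overline{\Delta}=(\nu\overline{r}\log p)^{1/\nu}$ yields
\[
B_p \;=\; \Bigl\{ \bigl(1 + (1-\beta)^{1/\nu}\bigr)(1+o_{\P}(1)) < \overline{r}^{1/\nu}\Bigr\}.
\]
By the hypothesis $\overline{r}<g(\beta)=(1+(1-\beta)^{1/\nu})^\nu$, the deterministic limit of the left side strictly exceeds the right side, so $\P[B_p]\to 0$.

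For the degenerate endpoint $\beta=1$ (where $|S_p|=1$ and URS is not directly applicable to $S_p$), I would argue separately: the single term $\max_{j\in S_p}(-\epsilon(j))$ is then tight, while $\max_{j\in S_p^c}\epsilon(j)/(\nu\log p)^{1/\nu}\to_{\P} 1$ by URS on $S_p^c$ and $\overline{\Delta}=(\nu\overline{r}\log p)^{1/\nu}$ with $\overline{r}<g(1)=1$, so the noise maximum dominates $\overline{\Delta}$ and $\P[B_p]\to 0$ again.

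The argument is essentially just the pigeonhole-style reduction to a separation event plus two applications of URS, so there is no serious obstacle; the only delicate point is making sure the slowly-varying factors $u_{|S_p^c|}/u_p$ and $u_{|S_p|}/(\nu(1-\beta)\log p)^{1/\nu}$ tend to $1$ (straightforward from $\log(p-s)/\log p\to 1$ and $\log s/\log p\to 1-\beta$), and handling $\beta=1$ by hand as above. All the work is hidden in the structural theorems (URS of ${\cal E}$ and $-{\cal E}$) that are assumed as hypotheses; the present theorem is the clean payoff.
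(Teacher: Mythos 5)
Your proof is correct and follows essentially the same path as the paper: reduce exact recovery to the threshold-free separation event $\{\min_{j\in S}x(j) > \max_{j\in S^c}x(j)\}$, bound it using $\overline{\Delta}$, and invoke URS of ${\cal E}$ on $S^c$ and of $-{\cal E}$ on $S$ together with the AGG quantile asymptotics to show the resulting probability vanishes when $\overline{r}<g(\beta)$. The only substantive difference is that you explicitly treat the $\beta=1$ endpoint (where $|S_p|=1$ and URS does not apply directly to $S_p$) by tightness of a single coordinate, which the paper glosses over but which requires exactly the observation you made.
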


We shall first comment on the role of error dependence.

\begin{remark} \label{rmk:dependence-assumptions}
Paraphrasing Theorems \ref{thm:sufficient} and Theorem \ref{thm:necessary}: 
if we consider only thresholding procedures, then for a very large class of dependence structures, we cannot improve upon the Bonferroni procedure $\widehat{S}_p^{\text{Bonf}}$. 
Specifically, for all ${\cal E}\in U(F)$, and for all $S_p\in {\cal S}$, we have
\begin{equation}
    \lim_{p\to\infty} \P[\widehat{S}_p^{\text{Bonf}}\neq S_p]
    = \begin{cases}
    \inf_{\widehat{S}_p \in {\cal T}} \liminf_{p\to\infty} \P[\widehat{S}_p\neq S_p] = 1, & \text{if}\quad \overline{r} < g(\beta)\\
    \inf_{\widehat{S}_p \in {\cal T}} \limsup_{p\to\infty} \P[\widehat{S}_p\neq S_p] = 0, & \text{if}\quad \underline{r} > g(\beta),\\
    \end{cases}
\end{equation}
where $\cal T$ is the set of all thresholding procedures (recall \eqref{eq:thresholding-procedure}). 

In view of Theorem \ref{thm:necessary}, we also answer the question raised in \citet{butucea2018variable}.
In particular, the authors of \citep{butucea2018variable} commented that independent error is the  `least favorable model' in the problem of support recovery, and conjectured that the support recovery problem may be easier to solve in the case of dependence, similar to how the problem of signal detection is easier under dependent errors (see \citep{hall2010innovated}).
Our results here state that asymptotically, \emph{all} error dependence structures in the URS class are equally difficult for thresholding procedures. 
Therefore, the phase-transition behavior is universal in the class of dependence structures characterized by URS.
\end{remark}

Returning to the more concrete example of Gaussian errors, Theorem \ref{thm:Gaussian-weak-dependence} yields the following corollary.

\begin{corollary} \label{cor:weakly-dependent-errors}
For UDD Gaussian errors, the conclusions in Theorem \ref{thm:necessary} hold.
\end{corollary}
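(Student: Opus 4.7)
The corollary is a clean combination of Theorem \ref{thm:necessary} and Theorem \ref{thm:Gaussian-weak-dependence}, so the plan is simply to verify each hypothesis of Theorem \ref{thm:necessary} for a UDD Gaussian array.

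First, the Gaussian marginal is a special case of the AGG model (with $\nu=2$), so the distributional hypothesis of Theorem \ref{thm:necessary} is satisfied. Second, since ${\cal E}$ is assumed to be UDD, Theorem \ref{thm:Gaussian-weak-dependence} gives that ${\cal E}$ is URS, i.e., ${\cal E}\in U(F)$ with $F$ standard normal.

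The only step that requires a brief remark is that $(-{\cal E})\in U(F)$ as well. Here I would observe that $(-{\cal E})$ is itself a Gaussian triangular array with standard normal marginals, and that its covariance matrix coincides with that of ${\cal E}$ because
\[
  \cov(-\epsilon_p(i),-\epsilon_p(j)) \;=\; \cov(\epsilon_p(i),\epsilon_p(j)).
\]
In particular, for every $\delta>0$ and every $j,p$,
\[
  \bigl|\{k:\cov(-\epsilon_p(j),-\epsilon_p(k))>\delta\}\bigr|
  \;=\;\bigl|\{k:\Sigma_p(j,k)>\delta\}\bigr|\;\le\; N(\delta),
\]
so $(-{\cal E})$ is UDD with the same bounding function $N(\delta)$. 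Applying Theorem \ref{thm:Gaussian-weak-dependence} once more yields $(-{\cal E})\in U(F)$.

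With both URS hypotheses verified, Theorem \ref{thm:necessary} applies directly and gives \eqref{eq:classification-impossible-dependent} whenever $\overline{r}<g(\beta)$. There is no real obstacle here: the only point worth emphasizing in the write-up is the invariance of the UDD property under coordinate-wise sign flip, since Theorem \ref{thm:necessary} needs stability of both maxima and minima, whereas Theorem \ref{thm:Gaussian-weak-dependence} is stated only for maxima.
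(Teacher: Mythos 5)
Your argument is correct and is exactly the implicit derivation intended by the paper, which states the corollary as an immediate consequence of Theorems \ref{thm:necessary} and \ref{thm:Gaussian-weak-dependence} without writing out a proof. The one detail the paper leaves silent---that the sign flip $\epsilon_p(j)\mapsto -\epsilon_p(j)$ preserves the covariance matrix and hence the UDD property, so that $(-{\cal E})\in U(F)$ follows from a second application of Theorem \ref{thm:Gaussian-weak-dependence}---is precisely the observation you supply, and it is the right one.
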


Following Remark \ref{rmk:dependence-assumptions}, we will demonstrate the tightness of the dependence conditions in Theorem \ref{thm:necessary}.
Specifically, we demonstrate that if the URS dependence condition is violated, then it may be possible to recover the support of weaker signals below the boundary.

\begin{example} \label{exmp:counter-example}
Suppose ${\cal E} = \left(\epsilon_p(j)\right)_{j=1}^p$ is Gaussian, and is comprised of $\lfloor p^{1-\beta}\rfloor$ blocks, each of size at least $\lfloor p^\beta \rfloor$; 
let the elements of each block have correlation 1, and let elements from different blocks be independent. 
If $\underline{r} \ge 4(1-\beta)$, then the procedure $\widehat{S} = \big\{j:x(j)>\sqrt{2(1-\beta)\log{p}}\big\}$ yields $\mathbb P[\widehat{S} = S] \to 1$. 
This requirement on signal size is strictly weaker than that of the strong classification boundary, since $4(1-\beta) < (1 + \sqrt{1-\beta})^2$ on $\beta\in(0,1)$.

The above example shows that if the correlations of the Gaussian errors do not decay in a uniform fashion (UDD fails), then we can do substantially better in terms of support recovery.
\end{example} 

Proof of the claims in the example can be found in Appendix \ref{subsec:proofs-examples}; numerical simulations of this example can be found in Section \ref{sec:numerical}.

Our second comment is on the role of error tail assumptions.

\begin{remark} \label{rmk:tail-assumptions}
Theorems \ref{thm:sufficient} and \ref{thm:necessary} state that the phase-transition phenomenon is universal in all distributions in the AGG class for thresholding procedures.
We will see in Section \ref{sec:optimality} that thresholding procedures are in fact \emph{sub-optimal} for AGG models with $\nu<1$. 
Therefore, the minimax optimality of the thresholding procedures only applies to AGG models with $\nu\ge1$.

On the other hand, the phase-transition phenomenon for thresholding procedures is universal in all error models with rapidly varying tails, which includes AGG models {\it for all} $\nu>0$.
In contrast, errors with heavy (regularly varying) tails do not exhibit this phenomenon (see Appendix \ref{sec:heavy-tailed} Theorem \ref{thm:heavy-tails}).
We summarize the role of thresholding procedures in Table \ref{table:role-of-thresholding}.
\end{remark}

\begin{table}[ht]
    \caption{Properties of thresholding procedures under different error distributions (in brackets).}
    \begin{tabular}{p{35mm}p{32mm}p{32mm}} \toprule
        Thresholding procedure & Minimax-optimality &  Phase-transition \\ 
        (Error distributions) &  (Log-concave density) & (Rapidly-varying tails) \\ \midrule
        AGG($\nu$), $\nu\ge1$ & Yes (Yes) & Yes (Yes) \\ \cmidrule{1-3}
        AGG($\nu$), $0<\nu<1$ & No (No) & Yes (Yes) \\ \cmidrule{1-3}
        Power laws & No (No) & No (No) \\ \bottomrule
    \end{tabular}
  \label{table:role-of-thresholding}
\end{table}

We conclude this section with the proof of Theorem \ref{thm:necessary}.

\begin{proof} [Proof of Theorem \ref{thm:necessary}]
Since the estimator $\widehat{S}_p = \{x(j) \ge t_p(x)\}$ is thresholding, the threshold must separate the signals and null part, i.e.,
\begin{equation*}
    \P[\widehat{S}_p = S_p] 
    = \P\left[\max_{j\in S^c}x(j) < t_p(x) \le \min_{j\in S}x(j)\right]
    \le \P\left[\max_{j\in S^c}x(j) < \min_{j\in S}x(j)\right].
\end{equation*}
Using the assumption that the signal sizes are no greater than $\overline{\Delta}$, we have
\begin{align}
\P\left[\max_{j\in S^c}x(j) < \min_{j\in S}x(j)\right]  
  &= \P\left[\frac{\max_{j\in S^c}x(j)}{u_p} < \frac{\min_{j\in S}x(j)}{u_p}\right] \nonumber \\
  &\le  \P\left[\frac{\max_{j\in S^c}\epsilon(j)}{u_p} < \frac{\min_{j\in S}\overline{\Delta} + \epsilon(j)}{u_p}\right] \nonumber \\
  &= \P\left[ \frac{M_{S^c}}{u_p} < \frac{\overline{\Delta} - m_S}{u_p} \right], \label{eq:classification-possible-dependent-proof-0}
\end{align}
where $M_{S^c} = \max_{j\in S^c}\epsilon(j)$ and $m_{S} = \max_{j\in S}\left(-\epsilon(j)\right)$.
Since the error arrays ${\cal E}$ and $(-{\cal E})$ are URS by assumption, using the expression for the AGG quantiles \eqref{eq:AGG-quantiles}, we have
\begin{equation} \label{eq:classification-possible-dependent-proof-1}
    \frac{M_{S^c}}{u_p} = \frac{M_{S^c}}{u_{|S^c|}} \frac{u_{|S^c|}}{u_p} \xrightarrow{\P} 1,
\quad \text{and} \quad
\frac{m_{S}}{u_p} = \frac{m_{S}}{u_{|S|}} \frac{u_{|S|}}{u_p} \xrightarrow{\P} (1-\beta)^{1/\nu},
\end{equation}
so that the two random terms in probability \eqref{eq:classification-possible-dependent-proof-0} converge to constants.

Since signal sizes are bounded above by $\overline{r} < \left(1 + (1-\beta)^{1/\nu}\right)^{\nu}$, we can write $\overline{r}^{1/\nu} = 1 + (1-\beta)^{1/\nu} - d$ for some $d > 0$, by our parametrization of $\overline{\Delta}$, we have
\begin{equation} \label{eq:classification-possible-dependent-proof-2}
    \frac{\overline{\Delta}}{u_p} = \left(1+(1-\beta)^{1/\nu}-d\right)(1+o(1)).
\end{equation}
Combining \eqref{eq:classification-possible-dependent-proof-1} and \eqref{eq:classification-possible-dependent-proof-2}, we conclude that the right-hand-side of the probability \eqref{eq:classification-possible-dependent-proof-0} converges in probability to a constant strictly less than 1, that is, 
\begin{equation}
    \frac{\overline{\Delta} - m_S}{u_p} \xrightarrow{\P} 1 - d.
\end{equation}
Therefore, the last probability in \eqref{eq:classification-possible-dependent-proof-0} must go to 0.
\end{proof}

\section{The (sub)optimality of thresholding procedures}
\label{sec:optimality}

In support recovery problems under models such as \eqref{eq:model}, thresholding procedures seem to be the only ``reasonable'' choice for estimating the support set $S$.
In Theorem \ref{thm:sufficient} we saw that a (FWER-controlling) thresholding procedure is indeed consistent for support recovery. 
It is natural to ask if it will be sufficient to restrict attention to only thresholding procedures.
We show in this section that, perhaps surprisingly, for general error models, thresholding procedures are not always optimal.

We pause the discussion on additive models \eqref{eq:model}, and start by characterizing the optimal procedure for support recovery problems under general settings.

We adopt here a Bayesian framework to measure statistical risks.
Specifically, we assume there are $s$ signals independent with (not necessarily equal) densities $f_{1}, \ldots, f_{s}$, located on $\{1,\ldots,p\}$ uniformly at random.
That is, let the ordered support $P=(j_1,\ldots,j_s)$ have prior
\begin{equation} \label{eq:uniform-ordered}
\pi^{\text{ord}}((j_1,\ldots, j_s)) := \P\left[P = (j_1,\ldots, j_s)\right] = {(p-s)!}/{p!},
\end{equation}
for all distinct $1\le j_1, \ldots, j_s\le p$; and $x(j_i)$ has density $f_{i}$ for $i=1,\ldots,s$.

The unordered support $S=\{j_1,\ldots,j_s\}$ is consequently distributed uniformly in the collection of all set of size $s$, denoted $\mathcal{S} = \left\{S\subseteq\{1,\ldots,p\};|S|=s\right\}$. That is,
\begin{equation} \label{eq:uniform}
\pi(\{j_1,\ldots, j_s\}) := \P\left[S = \{j_1,\ldots, j_s\}\right] = {(p-s)!s!}/{p!},
\quad\text{for all}\;S\in\mathcal{S}.
\end{equation}
Let the rest $(p-s)$ locations be independent and have common density $f_0$, i.e., $x(j)$ has density $f_0$ for $j\not\in S$.

If we consider the 0-1 loss function,
$$
\ell(\widehat{S},S) = 1 - \mathbbm{1}[\widehat{S} = S] = \mathbbm{1}[\widehat{S} \neq S],
$$
then the optimal procedures should minimize the Bayes risk $\E_{P,{\cal E}} [\ell(\widehat{S},S)]$, where the expectation is taken over all configurations $P$, with a uniform distribution $\pi^{\text{ord}}$ as specified in \eqref{eq:uniform-ordered}, and over the errors ${\cal E}$.
For a sequence of estimators $\widehat{S} = \widehat{S}_p$, the asymptotically optimal procedures are defined to be the ones that minimize the asymptotic Bayes risk,
\begin{equation} \label{eq:Bayes-risk}
    R(\widehat{S}) := \limsup_{p\to\infty} \E_{P,{\cal E}} [\ell(\widehat{S},S)] = \limsup_{p\to\infty} \P_{P,{\cal E}}[\widehat{S} \neq S].
\end{equation}

\subsection{Optimality of thresholding procedures and monotone likelihood ratios}

Recently, \citet{butucea2018variable} derived non-asymptotic bounds on the Hamming loss risk under Gaussian errors, and also investigated their generalizations in the case of non-Gaussian distributions under the following \emph{monotone likelihood ratio} (MLR) property.
\begin{definition}[Monotone Likelihood Ratio]
A family of positive densities on $\R$, $\{f_\delta, \delta \in U\}$, is said to have the MLR property if, for all $\delta_0, \delta_1\in U\subseteq\R$ such that $\delta_0 < \delta_1$, the likelihood ratio $\left(f_{\delta_1}(x)/f_{\delta_0}(x)\right)$ is an increasing function of $x$.
\end{definition}

If the sparsity $s = |S|$ of the problem is known, then a natural estimator for $S$ would be based on the set of top $s$ order statistics.
\begin{definition}[Oracle data thresholding]
We call $\widehat{S}^* = \{j\,|\, x(j)\ge x_{[s]}\}$ the oracle data thresholding procedure, where $x_{[1]} \ge \ldots \ge x_{[p]}$ are the order statistics of $x$.
\end{definition}

The MLR property is intimately linked with data thresholding procedures.

\begin{lemma} \label{lemma:optimal-oracle-procedures}
Let the observations $x(j)$, $j=1,\ldots,p$ have $s$ signals as prescribed in \eqref{eq:uniform-ordered}, and let the rest $(p-s)$ locations have common density $f_0$.
If each of $\{f_0, f_{1}\},\ldots,\{f_0,f_{s}\}$ form an MLR family, then the oracle data thresholding procedure $\widehat{S}^* = \{j\,|\, x(j)\ge x_{[s]}\}$ is finite-sample optimal in terms of Bayes risk $\E_{P,{\cal E}} [\ell(\widehat{S},S)]$.
\end{lemma}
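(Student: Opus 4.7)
Under a $0$--$1$ loss, the Bayes-optimal estimator is the maximum a posteriori (MAP) rule $\widehat S_{\text{MAP}}(x) = \argmax_{S\in\mathcal S} \pi(S\mid x)$; this is a standard fact and follows because for any (possibly randomized) estimator $\widehat S$,
\[
\P[\widehat S \neq S]
= 1-\E_x\!\left[\sum_{S\in\mathcal S}\mathbbm{1}[\widehat S(x)=S]\,\pi(S\mid x)\right]
\ge 1-\E_x\!\left[\max_{S\in\mathcal S}\pi(S\mid x)\right],
\]
with equality attained precisely by a MAP rule. So my plan is to (i) write $\pi(S\mid x)$ in a form that separates the set-dependent part from a factor constant in $S$, (ii) show that this set-dependent part is maximized by the top-$s$ set $\widehat S^* = \{j:x(j)\ge x_{[s]}\}$, using MLR.

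\textbf{Step 1: a permanent representation of the posterior.} Using the prior \eqref{eq:uniform-ordered} and marginalizing over the ordering of the support, the joint density of $(x,S)$ is
\[
p(x,S) \;=\; \frac{(p-s)!}{p!}\;\Bigl(\prod_{j=1}^{p} f_0(x(j))\Bigr)\; \sum_{\sigma}\prod_{i=1}^{s} L_i\bigl(x(\sigma(i))\bigr),
\]
where $L_i := f_i/f_0$ and $\sigma$ ranges over bijections $\{1,\dots,s\}\to S$. Dividing by the $S$-independent quantity $(p-s)!/p!\cdot\prod_j f_0(x(j))\cdot p(x)$, the MAP rule reduces to
\[
\widehat S_{\text{MAP}}(x) \;=\; \argmax_{S\in\mathcal S}\, \operatorname{perm}\!\bigl(L_i(x(a_j))\bigr)_{1\le i,j\le s},
\]
where $\{a_1,\dots,a_s\}$ is any enumeration of $S$. (The permanent is invariant under permutations of the columns, so the enumeration does not matter.)

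\textbf{Step 2: a swap argument.} I would show that replacing any index in $S$ by an outside index with a strictly larger observation does not decrease the permanent. Fix $S\in\mathcal S$, pick $j^{\dagger}\in S$ and $j^{\star}\notin S$ with $u:=x(j^{\dagger})\le v:=x(j^{\star})$, and set $S' = (S\setminus\{j^{\dagger}\})\cup\{j^{\star}\}$. Conditioning in the sum over $\sigma$ on the signal index $i$ assigned to $j^{\dagger}$ (resp.\ $j^{\star}$) gives
\[
\operatorname{perm}(L_i(x(a_j)))_{S} \;=\; \sum_{i=1}^{s} L_i(u)\, c_i,\qquad
\operatorname{perm}(L_i(x(a_j)))_{S'} \;=\; \sum_{i=1}^{s} L_i(v)\, c_i,
\]
where the nonnegative coefficient $c_i\ge 0$ depends only on the common set $S\setminus\{j^{\dagger}\}=S'\setminus\{j^{\star}\}$ and on $\{L_k:k\neq i\}$, so it is the same in both displays. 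By the MLR hypothesis applied to each pair $\{f_0,f_i\}$, the function $L_i$ is nondecreasing, so $L_i(v)\ge L_i(u)$ for every $i$. Hence
\[
\operatorname{perm}(L_i(x(a_j)))_{S'} \;-\; \operatorname{perm}(L_i(x(a_j)))_{S} \;=\; \sum_{i=1}^{s} \bigl(L_i(v)-L_i(u)\bigr)\,c_i \;\ge\; 0.
\]

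\textbf{Step 3: conclusion.} Starting from any $S\in\mathcal S$ with $S\neq \widehat S^*$, one can choose $j^{\dagger}\in S\setminus \widehat S^*$ and $j^{\star}\in \widehat S^*\setminus S$ with $x(j^{\star})\ge x(j^{\dagger})$ (such a pair exists by the definition of $\widehat S^*$), and iteratively apply Step~2 at most $s$ times to reach $\widehat S^*$ along a chain of sets on which the permanent is nondecreasing. Therefore $\widehat S^*$ attains the maximum of the posterior and hence is a MAP rule, which yields finite-sample Bayes optimality under $0$--$1$ loss.

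\textbf{Main obstacle.} The essential (and really the only nontrivial) ingredient is the permanent representation in Step~1, which makes MLR exactly the right monotonicity to run the column-swap argument in Step~2; ambiguities coming from ties in $x$ are harmless because the swap inequality is nonstrict and the MAP rule is only almost-surely unique.
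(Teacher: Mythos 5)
Your argument is correct and essentially the same as the paper's: both start from Bayes optimality of the MAP rule, marginalize the ordered prior to a sum over bijections $\{1,\dots,s\}\to S$, fix the signal index mapped to the swapped coordinate to factor the difference as $\sum_i c_i\bigl(L_i(v)-L_i(u)\bigr)$ with $c_i\ge 0$, and invoke MLR to conclude each term is nonnegative. Your permanent/likelihood-ratio notation and the explicit iteration to reach $\widehat S^*$ are only presentational refinements of the same swap argument.
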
 

The proof of Lemma \ref{lemma:optimal-oracle-procedures} is found in Appendix \ref{sec:proofs}.

We emphasize that the oracle thresholding procedures are in fact \emph{finite-sample optimal} in the Bayesian context.
Additionally, our setup allows for different alternative distributions, and relaxes the assumptions of \citet{butucea2018variable}, where the alternatives are assumed to be identically distributed; 
this is indeed an important practical concern.
It remains to understand when the key MLR property holds. 
We elaborate on this question next.

\subsection{Optimal procedure under sub-exponential errors}

Returning to the more concrete signal-plus-noise model \eqref{eq:model}, it turns out that the error tail behavior is what determines the optimality of data thresholding procedures.
In this setting, log-concavity of the error densities is \emph{equivalent} to the MLR property (Lemma \ref{lemma:MLR-log-concavity}). This, in turn, yields the finite-sample optimality of data thresholding procedures (Proposition \ref{prop:log-concave}).

\begin{lemma} \label{lemma:MLR-log-concavity}
Let $\delta$ be the magnitude of the non-zero signals in the signal-plus-noise model \eqref{eq:model} with positive error density $f_0$, and let $f_\delta(x) = f_0(x-\delta)$.
The family $\{f_\delta, \delta \in \R\}$ has the MLR property if and only if the error density $f_0$ is log-concave.
\end{lemma}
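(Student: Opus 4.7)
The plan is to reduce the MLR condition, via taking logarithms, to a first-difference monotonicity statement about $g := \log f_0$, and then to recognize this as the standard characterization of concavity of $g$. Note that $g$ is a well-defined real-valued (measurable) function since $f_0$ is assumed strictly positive.

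First, I would rewrite the MLR condition. For any $\delta_0 < \delta_1$,
\begin{equation*}
  \log\frac{f_{\delta_1}(x)}{f_{\delta_0}(x)} = g(x-\delta_1) - g(x-\delta_0),
\end{equation*}
so MLR holds iff, for every $\delta_0 < \delta_1$, the map $x \mapsto g(x-\delta_1) - g(x-\delta_0)$ is non-decreasing in $x$. Setting $h := \delta_1 - \delta_0 > 0$ and $y := x - \delta_0$, this is equivalent to requiring that for every $h > 0$ the backward first difference
\begin{equation*}
  y \ \longmapsto\ g(y) - g(y-h)
\end{equation*}
be non-increasing on $\R$. Everything now reduces to showing that this condition on first differences is equivalent to concavity of $g$.

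For the ``if'' direction (log-concavity $\Rightarrow$ MLR), concavity of $g$ implies that secant slopes over intervals of a fixed length are non-increasing as the interval is translated to the right, which gives exactly the desired monotonicity of $y \mapsto g(y) - g(y-h)$. For the ``only if'' direction, given $u < v$ I would set $w := (u+v)/2$ and $h := (v-u)/2$, and apply the monotonicity of $y \mapsto g(y) - g(y-h)$ at $y = w$ and $y = v$, obtaining
\begin{equation*}
  g(w) - g(u) \ \ge\ g(v) - g(w),
\end{equation*}
i.e.\ $g\bigl((u+v)/2\bigr) \ge (g(u)+g(v))/2$. Thus $g$ is midpoint concave.

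The only non-elementary step is the promotion from midpoint concavity to full concavity, which I would handle by invoking Sierpi\'nski's theorem: a measurable midpoint-concave function on $\R$ is concave. Measurability of $g = \log f_0$ is automatic since $f_0$ is a strictly positive density. Once concavity of $g$ is established, $f_0$ is by definition log-concave, completing the equivalence. I do not anticipate any real obstacle; the only subtle point is making sure that no continuity of $f_0$ is silently assumed, which is why Sierpi\'nski's theorem is needed rather than merely interpolating the midpoint inequality by dyadic density plus continuity.
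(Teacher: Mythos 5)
Your proposal is correct and follows essentially the same route as the paper: both proofs reduce MLR to a monotone first-difference condition on $\log f_0$, extract midpoint concavity by comparing the ratio at two suitably chosen points, invoke Sierpi\'nski's theorem for measurable midpoint-concave functions, and use the standard translation-invariance/secant-slope characterization of concavity for the converse. Your framing via the backward difference $y \mapsto g(y) - g(y-h)$ is a clean way of stating what the paper encodes in its four-point concavity lemma (Lemma \ref{lemma:four-point-concavity}), but the underlying content is identical.
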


\begin{proof}[Proof of Lemma \ref{lemma:MLR-log-concavity}]
Suppose MLR holds, we will show that $f_0(t) = \exp\{\phi(t)\}$ for some concave function $\phi$.
By the assumption of MLR, for any $x_1 < x_2$, setting $\delta_0 = 0$, and $\delta_1 = (x_2 - x_1)/2 > 0$, we have
\begin{equation*}
    \log{\frac{f_{\delta_1}(x_2)}{f_{\delta_0}(x_2)}}
    = \phi\left(\frac{(x_1+x_2)}{2}\right)- \phi(x_2) 
    \ge \phi(x_1)- \phi\left(\frac{(x_1+x_2)}{2}\right) 
    = \log{\frac{f_{\delta_1}(x_1)}{f_{\delta_0}(x_1)}}.
\end{equation*}
This implies that the log-density $\phi(t)$ is midpoint-concave, i.e., for all $x_1$ and $x_2$, we have,
\begin{equation}
    \phi\left(\frac{(x_1+x_2)}{2}\right) 
    \ge \frac{1}{2} \phi(x_1) + \frac{1}{2} \phi(x_2).
\end{equation}
For Lebesgue measurable functions, midpoint concavity is equivalent to concavity by the Sierpinki Theorem (see, e.g., Sec I.3 in \citep{donoghue2014distributions}). This proves the `only-if' part.

For the `if' part, when  $\phi(t) = \log{(f_0(t))}$ is log-concave, then for any $\delta_0 < \delta_1$, and any $x<y$, we have
\begin{equation} \label{eq:concavity-implies-MLR}
    \log{\frac{f_{\delta_1}(y)}{f_{\delta_0}(y)}} - \log{\frac{f_{\delta_1}(x)}{f_{\delta_0}(x)}}
    = \phi(y-\delta_1) - \phi(y-\delta_0) - \phi(x-\delta_1) + \phi(x-\delta_0) \ge 0,
\end{equation}
where the last inequality is a simple consequence of concavity (see Lemma \ref{lemma:four-point-concavity}). This proves the `if' part.
\end{proof}

Lemmas \ref{lemma:optimal-oracle-procedures} and \ref{lemma:MLR-log-concavity} yield immediately the following.

\begin{proposition} \label{prop:log-concave}
Consider the additive error model \eqref{eq:model}.
Let the errors $\epsilon$ be independent with common distribution $F$.
Let the signal $\mu$ have $s$ positive entries with magnitudes $0<\delta_1\le\ldots\le\delta_s$, located on $\{1,\ldots,p\}$ as prescribed in \eqref{eq:uniform-ordered}.
If $F$ has a positive, log-concave density $f$, then the oracle thresholding procedure $\widehat{S}^* = \{j\,;\,x(j)\ge x_{[s]}\}$ is finite-sample optimal in terms of Bayes risk.
\end{proposition}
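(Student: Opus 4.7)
The plan is to derive this as an immediate corollary of the two preceding lemmas. In the additive-noise model \eqref{eq:model}, conditional on the support $S = \{j_1,\ldots,j_s\}$, the observation $x(j_i)$ at the $i$-th signal location has density $f_{\delta_i}(\cdot) := f(\cdot - \delta_i)$, while at each $j \notin S$ it has the common null density $f_0 := f$. Thus it suffices to verify the hypothesis of Lemma \ref{lemma:optimal-oracle-procedures}, namely that each pair $\{f_0, f_{\delta_i}\}$, $i=1,\ldots,s$, forms an MLR family.

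To this end, note that by hypothesis $f$ is positive and log-concave, so Lemma \ref{lemma:MLR-log-concavity} applies and yields that the full location family $\{f_\delta : \delta \in \mathbb{R}\}$ has the MLR property. Since $\delta_i > 0 = 0$, each two-element sub-family $\{f_0, f_{\delta_i}\}$ then inherits MLR in the sense that the ratio $f_{\delta_i}(x)/f_0(x)$ is monotone increasing in $x$. An application of Lemma \ref{lemma:optimal-oracle-procedures} now yields the finite-sample Bayes optimality of $\widehat{S}^{*} = \{j : x(j) \ge x_{[s]}\}$, completing the proof.

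There is essentially no real obstacle in the proof of the proposition itself; the technical content is already packaged in the two preceding lemmas. The only point worth emphasizing is that the allowance of heterogeneous signal magnitudes $0 < \delta_1 \le \cdots \le \delta_s$ is accommodated because Lemma \ref{lemma:optimal-oracle-procedures} requires MLR only pairwise between the null density $f_0$ and each alternative $f_{\delta_i}$, and this follows uniformly from the global MLR property of the location family given by Lemma \ref{lemma:MLR-log-concavity}.
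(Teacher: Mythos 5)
Your proposal is correct and follows exactly the route the paper itself takes: the paper states that Proposition \ref{prop:log-concave} follows immediately from Lemmas \ref{lemma:optimal-oracle-procedures} and \ref{lemma:MLR-log-concavity}, and you have simply spelled out the (short) verification that log-concavity gives MLR for each pair $\{f_0, f_{\delta_i}\}$ before invoking the optimality lemma. Nothing is missing.
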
 
Remarkably, under MLR (or equivalently, log-concavity), the oracle thresholding procedure is finite-sample optimal even in the case where the signals have difference (positive) sizes.

The assumption of log-concavity of the densities is compatible with the AGG model when $\nu\ge1$, as demonstrated in the next example.

\begin{example}
The generalized Gaussian density $f(x)\propto \exp\{-|x|^\nu/\nu\}$ is log-concave for all $\nu\ge1$.
In this case, by Proposition \ref{prop:log-concave}, the oracle thresholding procedure is Bayes optimal.
\end{example}

%(This also explains the $\gamma\ge 1$ condition in Theorem 1 of Arias-Castro and Chen)

Consider the asymptotic Bayes risk as defined in \eqref{eq:Bayes-risk}, the statement for the necessary condition of support recovery, with the help of Proposition \ref{prop:log-concave}, can be strengthened to include all procedures, regardless of whether they are thresholding.

\begin{theorem} \label{thm:necessary-strengthened}
In the context of Theorem \ref{thm:necessary}, where the signal is below the strong classification boundary \eqref{eq:signal-below-boundary},
if the $\epsilon_p(j)$'s are independent and identically distributed with log-concave densities, and the signals are uniformly distributed as in \eqref{eq:uniform}, then no procedure succeeds. That is, for any sequence of estimators $\widehat{S} = \widehat{S}_p$, we have $R(\widehat{S}) = 1.$
%    $$
%    \lim_{p\to\infty}\mathbb P[\widehat{S}_p = S_p] = 0. \label{eq:classification-impossible-minimax}
%    $$
\end{theorem}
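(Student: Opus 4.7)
The plan is to combine the finite-sample Bayes optimality of the oracle thresholding procedure (Proposition \ref{prop:log-concave}) with the impossibility result for thresholding procedures below the boundary (Theorem \ref{thm:necessary}). The crucial observation is that the oracle $\widehat{S}^{*} = \{j : x(j) \ge x_{[s]}\}$ is itself a data-dependent thresholding procedure with threshold $t_p(x) = x_{[s]}$, so any lower bound valid for thresholding procedures applies to it; Bayes optimality then transfers this lower bound to \emph{all} procedures.

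First I would verify that the hypotheses of the two cited results are satisfied. Since the errors are iid, they are trivially uniformly relatively stable (Corollary \ref{cor:AGG-is-URS}), and the same holds for $-{\cal E}$, so Theorem \ref{thm:necessary} applies. Log-concavity of the error density together with Lemma \ref{lemma:MLR-log-concavity} yields the MLR property for the shifted family $\{f_0(\cdot - \delta) : \delta \in \R\}$, and Lemma \ref{lemma:optimal-oracle-procedures} then gives the finite-sample Bayes optimality of $\widehat{S}^{*}$ under the uniform prior \eqref{eq:uniform-ordered}. In particular, for every (possibly non-thresholding) estimator $\widehat{S}$,
\begin{equation*}
\P_{P, {\cal E}}[\widehat{S} \neq S] \ge \P_{P, {\cal E}}[\widehat{S}^{*} \neq S].
\end{equation*}

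Next I would apply Theorem \ref{thm:necessary} to the oracle. For every fixed support $S_p$ of size $s = \lfloor p^{1-\beta}\rfloor$ with signal magnitudes in $[\underline{\Delta}, \overline{\Delta}]$ and $\overline{r} < g(\beta)$, the theorem gives $\P_{{\cal E}}[\widehat{S}^{*} = S_p \mid S = S_p] \to 0$ as $p\to\infty$. Because the errors are iid, this conditional probability depends on the realization of $S$ only through $|S| = s$, so averaging over the uniform prior preserves the convergence: $\P_{P, {\cal E}}[\widehat{S}^{*} = S] \to 0$. Combined with the optimality inequality above, this gives $R(\widehat{S}) = \limsup_p \P_{P, {\cal E}}[\widehat{S} \neq S] = 1$ for every sequence of estimators.

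The only genuine subtlety is reconciling Theorem \ref{thm:necessary}, which is stated for a deterministic support $S_p$ with worst-case signal magnitudes, with the Bayes framework of Proposition \ref{prop:log-concave}, in which the support is random. The iid structure makes this a routine step, because the joint law of $(x(j))_{j=1}^p$ conditional on $\{S = S_p\}$ is invariant under permutations of $\{1,\dots,p\}$ that fix $S_p$ setwise, so the bound from Theorem \ref{thm:necessary} is uniform in the realization of $S$. Beyond this bookkeeping, the argument is a short composition of two theorems already in hand, and I do not anticipate any novel difficulty.
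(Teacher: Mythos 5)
Your proposal is correct and follows the paper's own argument: the paper likewise proves this by combining Proposition \ref{prop:log-concave} (finite-sample Bayes optimality of the oracle thresholding rule under log-concavity) with Theorem \ref{thm:necessary} (failure of all thresholding procedures below the boundary), noting that the oracle is itself a thresholding procedure. Your write-up merely makes explicit the hypothesis checks (iid $\Rightarrow$ URS, exchangeability allowing the conditional bound to average over the uniform prior) that the paper leaves implicit.
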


\begin{proof}[Proof of Theorem \ref{thm:necessary-strengthened}]
When errors are independent with log-concave density, the oracle thresholding procedure, by Proposition \ref{prop:log-concave}, is optimal.
But all thresholding procedures fail according to Theorem \ref{thm:necessary}.
\end{proof}

\begin{remark}
In view of Theorem \ref{thm:necessary-strengthened}, Bonferroni's thresholding procedure $\widehat{S}_p^{\text{Bonf}}$ described in Theorem \ref{thm:sufficient} is an asymptotically minimax procedure over the class of error models $U(F)$.
That is, in the class of errors with AGG($\nu$) marginals $F$ where $\nu\ge1$, we have
\begin{equation} \label{eq:minimax}
    \widehat{S}_p^{\text{Bonf}} \in \argmin_{\widehat{S}_p} \sup_{{\cal E}\in U(F)} R(\widehat{S}_p),
\end{equation}
where the minimum is taken over \emph{all} procedures $\widehat{S}$.
It can be shown that the supremum in the Bayes risk formulation \eqref{eq:minimax}, where $S$ is random and uniformly distributed, can be replaced by a supremum over all possible (deterministic) configurations of support sets.
This can be done by applying Theorem 1.1 in the supplement of \citep{butucea2018variable}.
We content ourselves with a Bayesian minimax result here.
\end{remark}

\subsection{Optimal procedure under super-exponential errors}

When MLR fails (or equivalently, when errors in model \eqref{eq:model} do not have log-concave densities), the optimal procedures are no longer data thresholding (Proposition \ref{prop:log-convex}). 
Instead, the following result (Lemma \ref{lemma:likelihood-ratio-thresholding}) shows that a different type of thresholding procedures which we refer to as \emph{likelihood ratio thresholding}, is in fact optimal. 

\begin{lemma} \label{lemma:likelihood-ratio-thresholding}
Let the observations $x(j)$, $j=1,\ldots,p$ have $s$ signals as prescribed in \eqref{eq:uniform-ordered} having common density $f_a$, and let the rest $(p-s)$ locations have common density $f_0$.
Define the likelihood ratios 
$$
L(j) := {f_a(x(j))}\big/{f_0(x(j))},
$$
and let $L_{[1]} \ge L_{[2]} \ge \ldots \ge L_{[p]}$ be the order statistics of $L(j)$'s.
Then the procedure $\widehat{S}_{\text{opt}} = \{j\,|\,L(j) \ge L_{[s]}\}$ is finite-sample optimal in terms of Bayes risk $\E_{P,{\cal E}} [\ell(\widehat{S},S)]$.
\end{lemma}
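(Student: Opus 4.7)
The plan is to recognize this as a standard Bayesian classification problem under $0$-$1$ loss, for which the optimal (MAP) rule can be written down explicitly and then shown to coincide with the likelihood-ratio thresholding rule.

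First, I would observe that minimizing the Bayes risk $\E_{P,{\cal E}}[\ell(\widehat{S},S)] = \P[\widehat{S} \neq S]$ under $0$-$1$ loss is equivalent to maximizing the posterior probability $\P[S = T \mid x]$ pointwise in $x$ over all $T \in \mathcal{S}$. That is, the Bayes optimal estimator at a given $x$ is the MAP estimator
\[
\widehat{S}(x) \in \argmax_{T \in \mathcal{S}} \P[S = T \mid x].
\]
This is a standard fact: conditioning on $x$, the probability of error is $1 - \P[S = \widehat{S}(x) \mid x]$, which is minimized precisely by maximizing the posterior. Taking expectation over $x$ then yields the finite-sample Bayes optimality.

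Next, I would apply Bayes' rule using the uniform prior $\pi$ on $\mathcal{S}$ from \eqref{eq:uniform}. Since the prior assigns equal mass to every subset of size $s$, and since, conditional on $S = T$, the coordinates are independent with $x(j) \sim f_a$ for $j \in T$ and $x(j) \sim f_0$ for $j \notin T$, we have
\[
\P[S = T \mid x] \;\propto\; \prod_{j \in T} f_a(x(j)) \prod_{j \notin T} f_0(x(j))
\;=\; \Bigl(\prod_{j=1}^p f_0(x(j))\Bigr) \prod_{j \in T} L(j),
\]
where $L(j) = f_a(x(j))/f_0(x(j))$. The first factor does not depend on $T$, so maximizing the posterior over $T \in \mathcal{S}$ reduces to maximizing $\prod_{j \in T} L(j)$, equivalently $\sum_{j \in T} \log L(j)$, over subsets $T$ of size exactly $s$.

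Finally, this combinatorial maximization is trivial: the sum of the top $s$ values of $\log L(j)$ over all size-$s$ subsets is uniquely achieved (up to ties) by $T = \{j : L(j) \ge L_{[s]}\} = \widehat{S}_{\text{opt}}$. Thus $\widehat{S}_{\text{opt}}$ realizes the MAP estimator and hence minimizes the Bayes risk. There is essentially no hard step here; the only minor wrinkle is the treatment of ties in $L(j)$, which can be resolved by breaking ties arbitrarily without affecting the value of $\P[\widehat{S} = S]$, and the mild book-keeping that $f_0>0$ (implicit in the problem setup) ensures the likelihood ratios are well defined.
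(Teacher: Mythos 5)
Your proposal is correct and follows essentially the same path as the paper: reduce Bayes optimality under $0$-$1$ loss to MAP, use the uniform prior and conditional independence to factor the posterior as a constant times $\prod_{j\in T} L(j)$, and then select the top $s$ likelihood ratios. The only cosmetic difference is that the paper finishes with a pairwise swap argument (exchanging an index with $L(j)\le L(j')$ cannot decrease the likelihood) rather than your direct observation that the sum $\sum_{j\in T}\log L(j)$ over size-$s$ subsets is maximized by the top $s$ values; both establish the same fact.
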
 

The proof of Lemma \ref{lemma:likelihood-ratio-thresholding} is found in Appendix \ref{sec:proofs}.

\begin{remark}
While the assumption that the alternatives have a common density $f_a$ is slightly more restrictive than in Lemma \ref{lemma:optimal-oracle-procedures}, it is not excessively so if one considers a fully Bayesian alternative, where $f_a = p_1f_1 + p_2f_2 + \ldots + p_sf_s$ is a random mixture of alternatives $f_1,\ldots,f_s$ with probability weights $p_1,\ldots,p_s$.
\end{remark}

The characterization of optimal likelihood ratio thresholding procedures in Lemma \ref{lemma:likelihood-ratio-thresholding} may not always yield practical estimators, as the density of alternatives, and number of signals are typically unknown.
Nevertheless, some insights can still be obtained by virtue of Lemma \ref{lemma:likelihood-ratio-thresholding}.

\begin{proposition} \label{prop:log-convex}
Consider the additive error model \eqref{eq:model}.
Let the errors $\epsilon$ be independent with common distribution $F$.
Let each of the $s$ signals be located on $\{1,\ldots,p\}$ uniformly at random, and take on magnitudes $\delta_1\le\ldots\le\delta_s<\infty$ independently with probability $p_1,\ldots,p_s$.
If errors $\epsilon$ are iid with density $f$ log-convex on $[K, +\infty)$, then whenever $j\in\widehat{S}_{\text{opt}}$ for some $x(j) > K+\delta_s$, we must have $j'\in\widehat{S}_{\text{opt}}$ for all $K+\delta_s \le x(j') < x(j)$.
\end{proposition}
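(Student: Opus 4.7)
The plan is to invoke Lemma \ref{lemma:likelihood-ratio-thresholding}, which identifies the Bayes-optimal procedure as the one selecting the $s$ indices with the largest likelihood ratios $L(j) = f_a(x(j))/f_0(x(j))$. In our setup, the null density is $f_0 = f$, and since each signal independently takes magnitude $\delta_i$ with probability $p_i$, every signal coordinate has marginal density $f_a(x) = \sum_{i=1}^{s} p_i f(x-\delta_i)$. Hence the likelihood ratio at observed value $x$ takes the form
\[
L(x) \;=\; \sum_{i=1}^{s} p_i \frac{f(x-\delta_i)}{f(x)} \;=\; \sum_{i=1}^{s} p_i \exp\bigl(\phi(x-\delta_i) - \phi(x)\bigr),
\]
where $\phi := \log f$. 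The proposition then reduces to the claim that $L$ is non-increasing on $[K+\delta_s,+\infty)$: indeed, if $j \in \widehat{S}_{\text{opt}}$ with $x(j) > K+\delta_s$ and if $K+\delta_s \le x(j') < x(j)$, then $L(x(j')) \ge L(x(j)) \ge L_{[s]}$, forcing $j' \in \widehat{S}_{\text{opt}}$ by the characterization in Lemma \ref{lemma:likelihood-ratio-thresholding}.

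To establish this monotonicity, fix $x_1 < x_2$ in $[K+\delta_s,+\infty)$. Since $\delta_i \le \delta_s$ for every $i$, both $x_1-\delta_i$ and $x_2-\delta_i$ lie in $[K,+\infty)$, where $\phi$ is convex. The slope-monotonicity characterization of convex functions --- the exact analogue of the four-point inequality invoked in the proof of Lemma \ref{lemma:MLR-log-concavity}, with the direction of the inequality reversed --- gives
\[
\phi(x_2) - \phi(x_2-\delta_i) \;\ge\; \phi(x_1) - \phi(x_1-\delta_i),
\]
equivalently $\phi(x-\delta_i) - \phi(x)$ is non-increasing in $x$ on $[K+\delta_s,+\infty)$. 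Consequently each summand $p_i \exp\bigl(\phi(x-\delta_i) - \phi(x)\bigr)$ is non-increasing there, and so is their positive-weighted sum $L(x)$.

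The only delicate point is uniformity across $i$: for the four-point inequality to apply to every term, we need $x-\delta_i \in [K,+\infty)$ for all $i \le s$ simultaneously, which is precisely why the cutoff $K+\delta_s$ (and not, say, $K+\delta_1$) appears in the statement. No differentiability of $f$ is required, as convex functions possess non-decreasing difference quotients by definition. With the monotonicity of $L$ on $[K+\delta_s,+\infty)$ in hand, the claimed inclusion property of $\widehat{S}_{\text{opt}}$ follows immediately from the reduction in the first paragraph.
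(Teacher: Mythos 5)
Your argument is correct and is essentially the same as the paper's. Both reduce the claim, via Lemma \ref{lemma:likelihood-ratio-thresholding}, to showing the likelihood ratio $L(x)=\sum_i p_i f(x-\delta_i)/f(x)$ is non-increasing on $[K+\delta_s,\infty)$, and both establish this from the term-wise four-point convexity inequality (the reversed version of the inequality used in Lemma \ref{lemma:MLR-log-concavity}). Your write-up is a bit more explicit than the paper's — the paper first remarks that $f_a$ is log-convex (true, since log-convexity is closed under positive linear combinations, but not actually needed) and then immediately passes to the term-by-term bound, exactly as you do; you also correctly flag that $K+\delta_s$ rather than $K+\delta_1$ is the right cutoff so that all shifted arguments $x-\delta_i$ land in $[K,\infty)$.
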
 

Specifically, if there are $m$ observations exceeding $K+\delta_s$, with $m>s$, then the top $m-s$ observations will \emph{not} be included in the optimal estimator $\widehat{S}_{\text{opt}}$.
This shows that, in the case when the errors have super-exponential tails, the optimal procedures are in general \emph{not} data thresholding.

\begin{proof}[Proof of Proposition \ref{prop:log-convex}]
Since all $f_{\delta i}(t) = {f(t-\delta_i)}$ are log-convex on $[K+\delta_s, \infty)$, so is the alternative $f_a(t) = \sum_{i=1}^s p_if_{\delta i}(t)$.
By Relation \eqref{eq:concavity-implies-MLR} in the proof of Lemma \ref{lemma:MLR-log-concavity} and appealing to log-convexity (rather than log-concavity), the likelihood ratios
$$
L(j) := \frac{f_a(x(j))}{f_0(x(j))} = \sum_{i=1}^s \frac{p_if_{\delta i}(x(j))}{f_0(x(j))},
$$
are decreasing in $x(j)$ on $[K+\delta_s, \infty)$.
The proof is complete by applying Lemma \ref{lemma:likelihood-ratio-thresholding}.
\end{proof}

\begin{example}[Sub-optimality of data thresholding] \label{exmp:log-convex}
Let the errors have generalized Gaussian density with $\nu=1/2$, i.e., $\log{f_0(x)}\propto -x^{1/2}$. Let dimension $p=2$, sparsity $s=1$ with uniform prior, and signal size $\delta=1$.
If the observations take on values $x = (x_1, x_2)^\mathrm{T} = (1,2)^\mathrm{T}$, we see from a comparison of the likelihoods
$$
\log \frac{f(x|\{1\})}{f(x|\{2\})} = 2x_1^{1/2} + 2(x_2 - 1)^{1/2} - 2x_2^{1/2} - 2(x_1 - 1)^{1/2} = 4 - 2\sqrt{2} > 0,
$$
that even though $x_1<x_2$, the set $\{1\}$ is a better estimate of support than $\{2\}$, i.e., $\P[S=\{1\}\,|\,x] > \P[S=\{2\}\,|\,x]$.
In other words, $\widehat{S}_{\text{opt}} \neq \widehat{S}^*$.
\end{example}

It should be remarked that Proposition \ref{prop:log-convex} assumes the existence and knowledge of a maximum signal size $\delta_s$. 
When maximum signal sizes are unknown a priori, a large observation may be attributed to either a truly large signal, or to the error.
Common practice has been to attribute large observations to signals, and not errors.

As an example, consider the linear regression
\begin{equation} \label{eq:regression}
 Y = X\mu + \xi,
\end{equation}
where $\mu$ is a vector of regression coefficients of interest to be inferred from observations of $X$ and $Y$.
If the design matrix $X$ is of full column rank, then the OLS estimator of $\mu$ can be formed 
\begin{equation*}
    \widehat{\mu} = \left(X'X\right)^{-1}X'Y = \mu + \epsilon,
\end{equation*}
where $\epsilon := (X'X)^{-1}X'\xi$.
Hence we recover the generic problem \eqref{eq:model}. 
The support recovery problem is therefore equivalent to the fundamental model selection problem.
Often an investigator calculates the t-scores of each coefficient as 
\begin{equation} \label{eq:linear-model-selection}
    \widehat{\mu}(j) \Big/ \widehat{\mathrm{s.e.}}(\widehat{\mu}(j)),
\end{equation}
where $\widehat{\mathrm{s.e.}}(\widehat{\mu}(j))$ is the estimated standard error of $\widehat{\mu}(j)$.
The investigator then chooses indices with large t-scores to enter the model.
If the errors in model \eqref{eq:regression} are iid Gaussian, the expression \eqref{eq:linear-model-selection} is t-distributed and have power-law tails; the discussion above suggests that this commonplace procedure may be sub-optimal for bounded signals.

\section{Numerical illustrations}
\label{sec:numerical}

We examine numerically the boundaries in finite dimensions, illustrating the boundary \eqref{eq:strong-classification-boundary} under several error tail assumptions, as well as error dependence structures.

\subsection{Exact support recovery and tail assumptions}

To demonstrate the phase-transition phenomenon under different error tail densities, we simulate from the additive error model \eqref{eq:model} with
\begin{itemize}
    \item Gaussian errors, where the density is given by
    $$f(x) = \frac{1}{\sqrt{2\pi}}\exp{\left\{-x^2/2\right\}}.$$
    \item Laplace errors, where the density is given by $$f(x) = \frac{1}{2}\exp{\left\{-\left|x\right|\right\}}.$$
    \item Generalized Gaussian errors with $\nu=1/2$, where the density is given by 
    $$f(x) = \frac{1}{2}\exp{\left\{-2\left|x\right|^{1/2}\right\}}.$$
\end{itemize}
The sparsity and signal size of the sparse mean vector are parametrized as in equations \eqref{eq:sparsity-parametrized} and \eqref{eq:signal-size-parametrized}, respectively.
The support set $S$ is estimated with 
$\widehat{S} = \left\{j:x(j)>\sqrt{2\log{p}}\right\}$ 
under the Gaussian errors, 
$\widehat{S} = \left\{j:x(j)>\log{p} + (\log{\log{p}})/2\right\}$ 
under the Laplace errors, and with
$\widehat{S} = \{j:x(j)> \frac{1}{4}\left(W\left(-c/(ep\log{p})\right) + 1\right)^2\}$
under the generalized Gaussian ($\nu = 1/2$) errors. Here $W$ is the Lambert W function, i.e., $W=f^{-1}$ where $f(x)=x\exp{(x)}$.
The choices of thresholds correspond to Bonferroni's procedures with FWER decreasing at a rate of $1/\sqrt{\log{p}}$, therefore satisfying the assumptions in Theorem \ref{thm:sufficient}.
Experiments were repeated 1000 times under each sparsity-and-signal-size combination.

The results of the numerical experiments are shown in Figure \ref{fig:phase-simulated}.
The numerical results illustrate that the predicted boundaries are not only accurate in high-dimensions ($p=10000$, right panels of Figure \ref{fig:phase-simulated}), but also practically meaningful even at moderate dimensions ($p=100$, left panels of Figure \ref{fig:phase-simulated}).

\begin{figure}
      \centering
      \includegraphics[width=0.4\textwidth]{./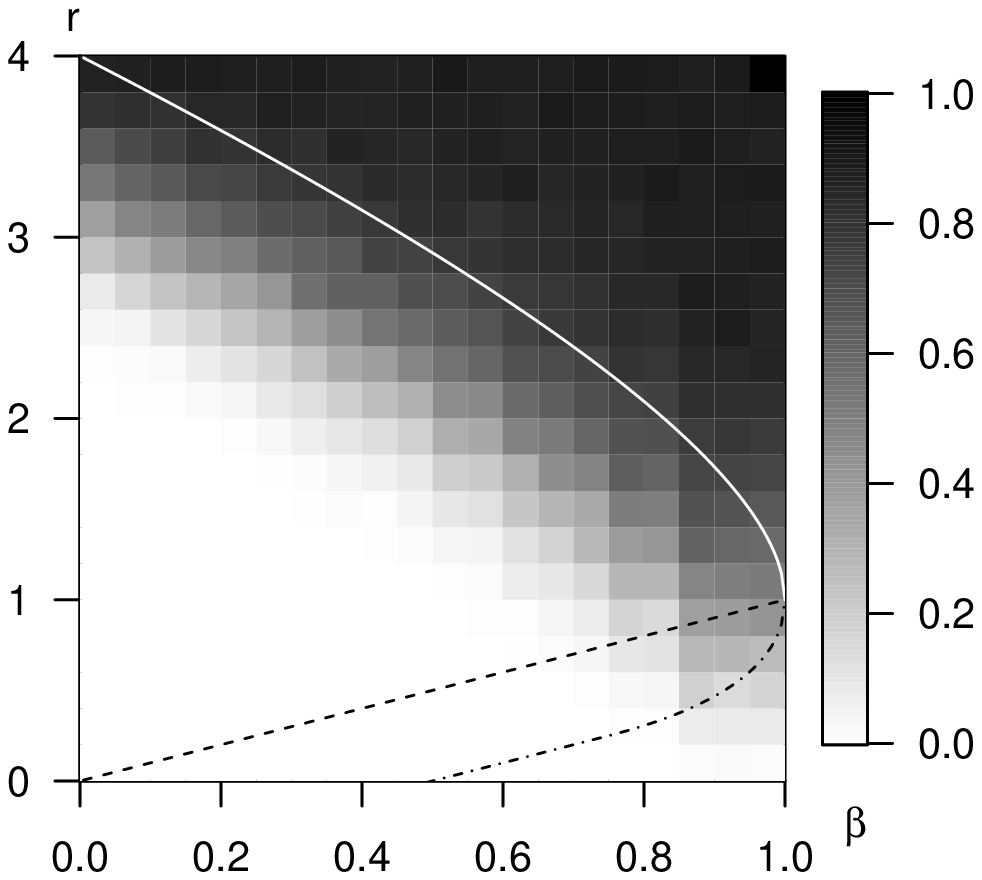}
      \includegraphics[width=0.4\textwidth]{./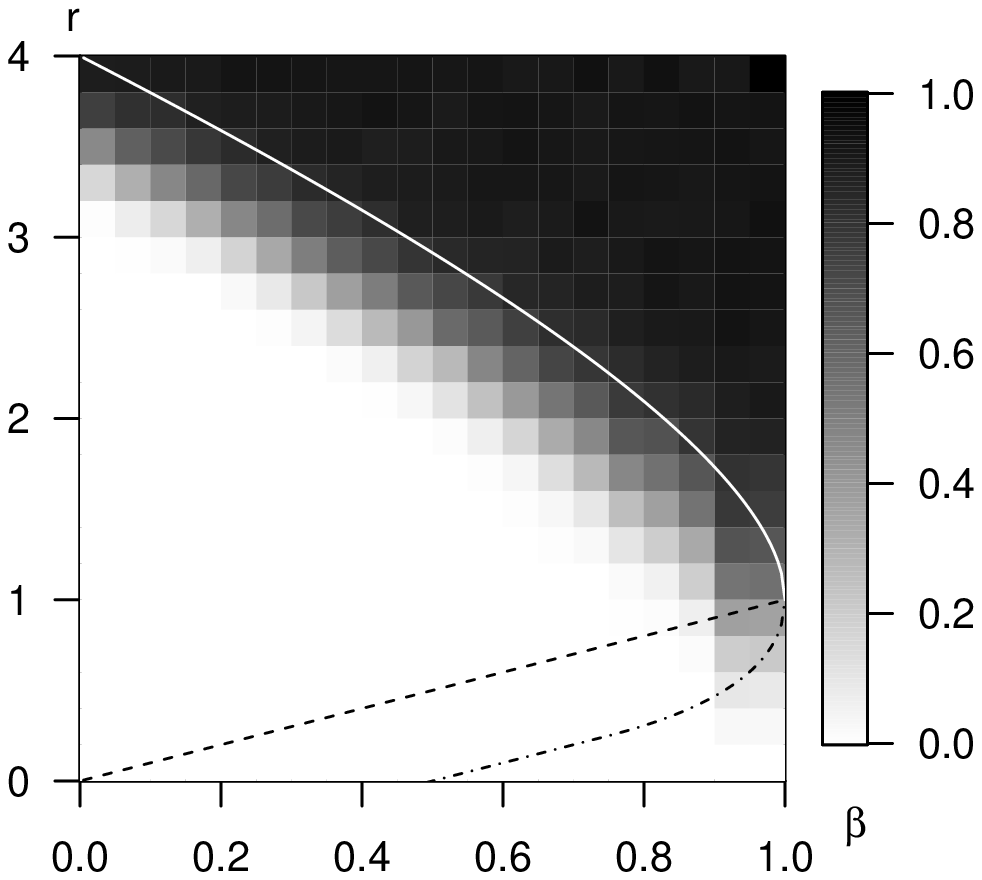}
      \includegraphics[width=0.4\textwidth]{./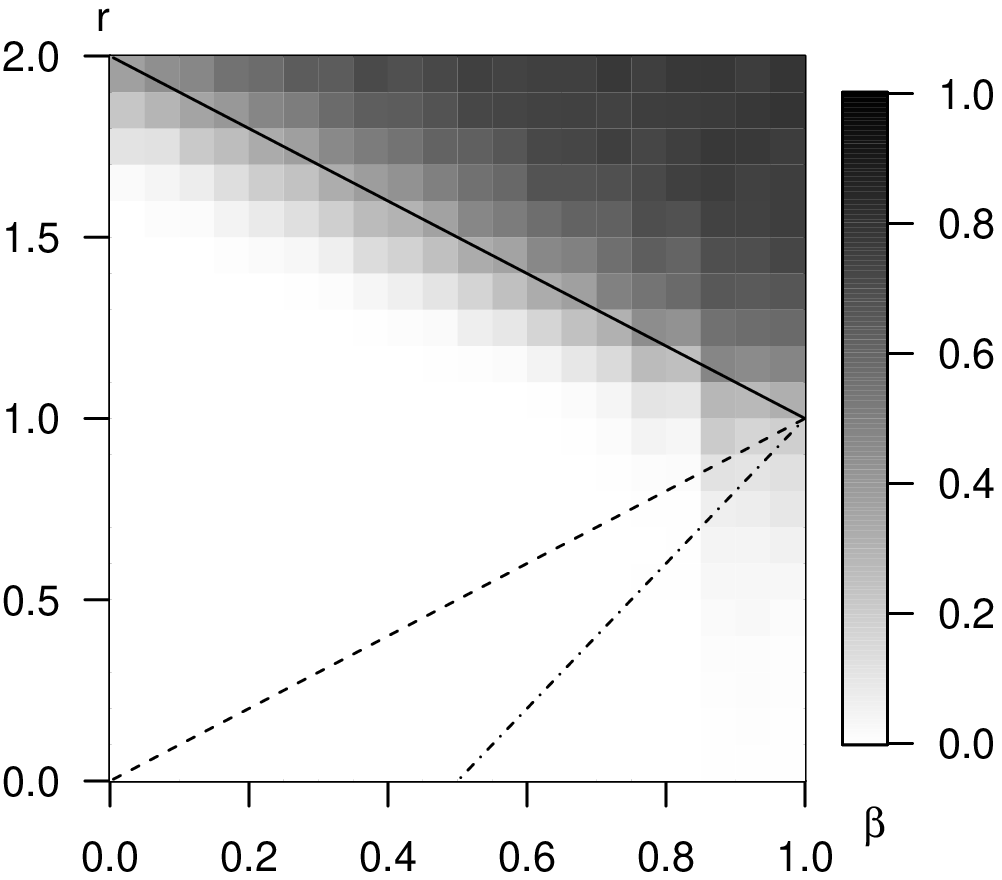}
      \includegraphics[width=0.4\textwidth]{./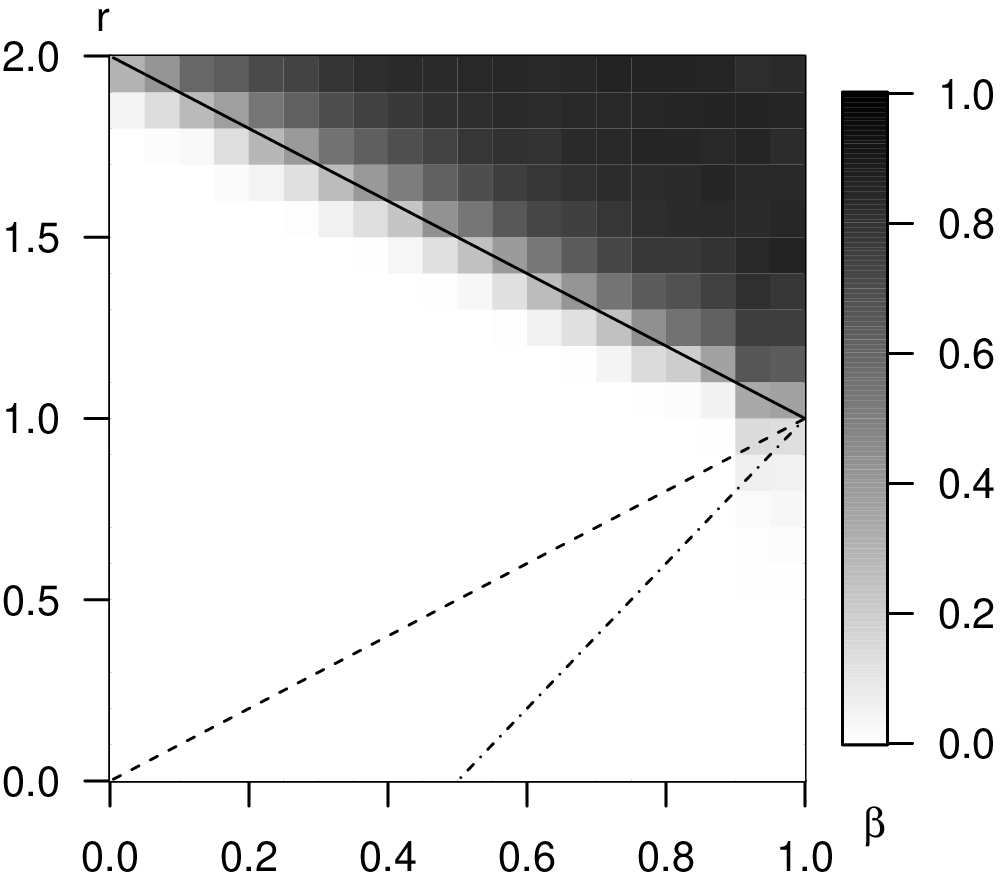}
      \includegraphics[width=0.4\textwidth]{./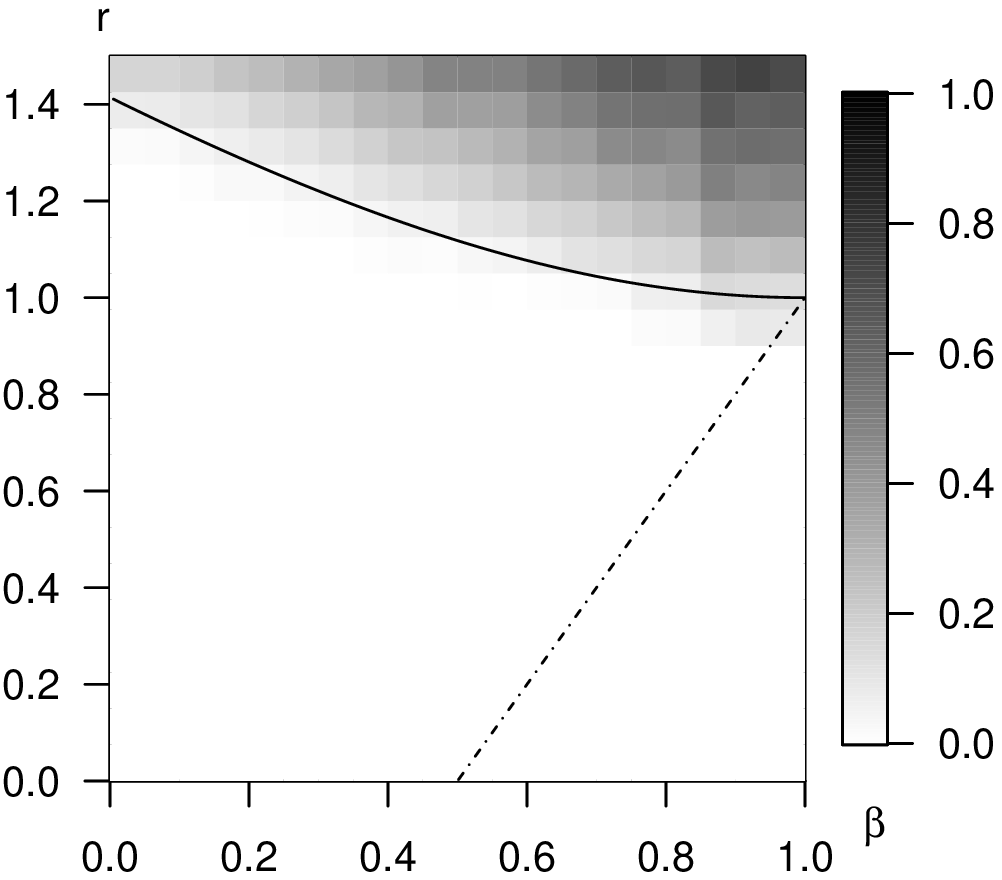}
      \includegraphics[width=0.4\textwidth]{./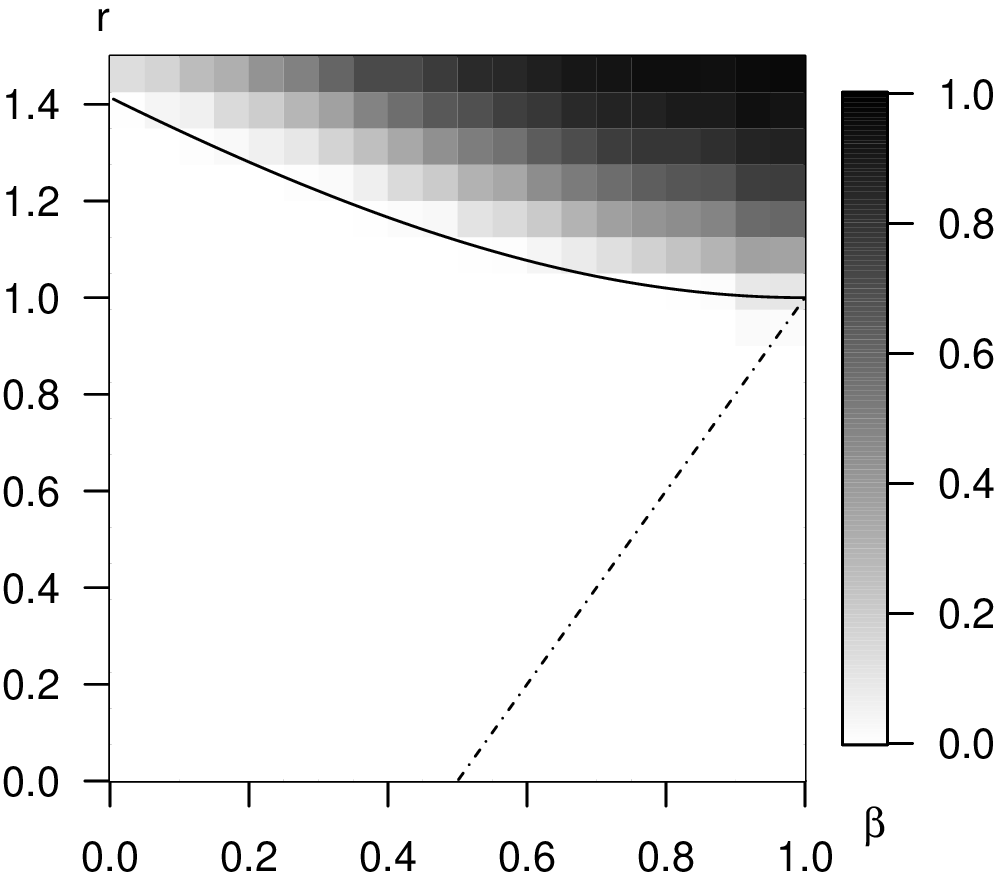}
      \caption{The empirical probability of exact support recovery from numerical experiments, as a function of sparsity level $\beta$ and signal sizes $r$, from Gaussian error models (upper panels), Laplace error models (middle panels), and generalized Gaussian with $\nu=1/2$ (lower panels); darker color indicates higher probability of exact support recovery. 
      The experiments were repeated 1000 times for each sparsity-signal size combination, and for dimensions $p=100$ (left panels) and $p=10000$ (right panels). Numerical results agree with the boundaries described in Theorem \ref{thm:sufficient}; convergence is noticeably slower for under generalized Gaussian ($\nu=1/2$) errors.
      For reference, the dashed and dash-dotted lines represent the weak classification and detection boundaries (see Sec \ref{sec:sufficient}, and \citep{haupt2011distilled, arias2017distribution, ingster1998minimax, donoho2004higher}).}
      \label{fig:phase-simulated}
\end{figure}

\subsection{Exact support recovery and dependence}

We also illustrate the effect of dependence on the phase-transition behavior in finite dimensions.
We shall compare the performance of the Bonferroni's procedure, which is agnostic to both sparsity and signal size, with the oracle procedure which picks the top-s observations.

The first set of experiments involves dependent errors from an auto-regressive (AR) model.
\begin{itemize}
    \item AR(1) Gaussian errors with parameter $\rho = -0.5$, $\rho = 0.5$, and $\rho = 0.9$,
    where the autocovariance functions decay exponentially
    $$\rho_{k} = \rho^{k}.$$
\end{itemize}
The results of the experiment are shown in Figure \ref{fig:phase-simulated-dependent}.

The second set of experiments explores heavily dependent, and non-UDD errors.
In particular we simulate
\begin{itemize}
    \item Fractional Gaussian noise (fGn) with Hurst parameter $H = 0.75$ and $H = 0.9$. 
    The autocovariance functions are 
    % $$\rho_{k} \sim 2H(2H-1)k^{2H-2},$$
    $$\rho_{k} \sim 0.75k^{-0.6},$$
    and, respectively,
    $$\rho_{k} \sim 1.44k^{-0.2},$$
    as $k\to\infty$.
    Both fGn models represent the regime of long-range dependence, where covariances decay very slowly to zero, so that $\sum|\rho_k| = \infty$; see, e.g., \citep{taqqu:2003-livre}.
    \item The non-UDD Gaussian errors described in Example \ref{exmp:counter-example}.
\end{itemize}
We will apply both the the sparsity-and-signal-size-agnostic Bonferroni's procedure, i.e., $\widehat{S} = \{j:x(j)>\sqrt{2\log{p}}\}$, as well as the oracle procedure $\widehat{S}^* = \{j:x(j)\ge x_{[s]}\}$, $s=|S|$, to all settings.
Results of the numerical experiments for the AR models are shown in Figure \ref{fig:phase-simulated-dependent}; the fGn and non-UDD models are shown in Figure \ref{fig:phase-simulated-very-dependent}.

Notice that the oracle procedure sets its thresholds more aggressively (at roughly $\sqrt{2\log s}$) than the Bonferroni procedure (at $\sqrt{2\log p}$).
Although this difference vanishes as $p\to\infty$, in finite dimensions ($p=10\,000$) the advantage can be felt. 
Indeed, in all our experiments the oracle procedures is able to recover support of signals with higher probability than the Bonferroni procedures; compare left and right columns of Figures \ref{fig:phase-simulated-dependent} and \ref{fig:phase-simulated-very-dependent}.
Notice also that there is an increase in probability of recovery near $\beta=0$ for oracle procedures.
This is an artifact due to the fact that $s = \lfloor p^{1-\beta}\rfloor < p/2$, and there are more signals than nulls. The oracle procedure is able to adjust to this reversal by lowering its threshold accordingly.

For UDD errors, Theorem \ref{thm:necessary} predicts that exact recovery of the support is impossible when signal sizes are below the boundary \eqref{eq:strong-classification-boundary}, even with oracle procedures. 
Both the AR and the fGn models generate UDD Gaussian errors, and should demonstrate the same phase-transition boundary.
However, the rate of this convergence (i.e., $\P[\widehat{S}^*=S]\to0\;\text{or}\;1$) can be very slow when the errors are heavily dependent,
even though all AR and fGn models demonstrate qualitatively the same behavior in line with the predicted boundary \eqref{eq:strong-classification-boundary}. 
In finite dimensions ($p=10\,000$), as dependence in the errors increases (AR(1) to fGN(H=0.9)), the oracle procedure becomes more powerful at recovering signal support with high probability for weaker signals. 
See right columns of Figures \ref{fig:phase-simulated-dependent} and \ref{fig:phase-simulated-very-dependent}.

On the other hand, as demonstrated in Example \ref{exmp:counter-example}, non-UDD errors yield qualitatively different behavior; exact support recovery is possible for signal sizes strictly weaker than that in the UDD case. 
Lower-right panel of Figure \ref{fig:phase-simulated-very-dependent} demonstrates in this example that the signal support can be recovered as long as the signal sizes are larger than $4(1-\beta)$.

\begin{figure}
    \centering
    \includegraphics[width=0.4\textwidth]{./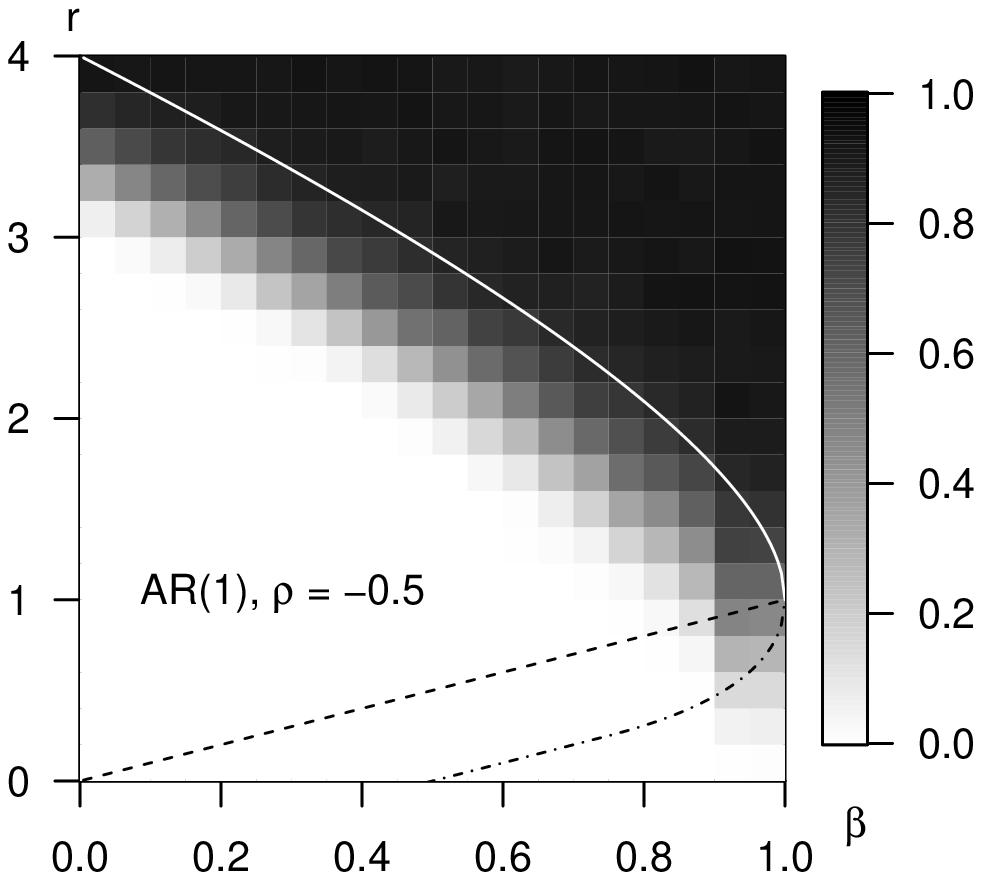}
    \includegraphics[width=0.4\textwidth]{./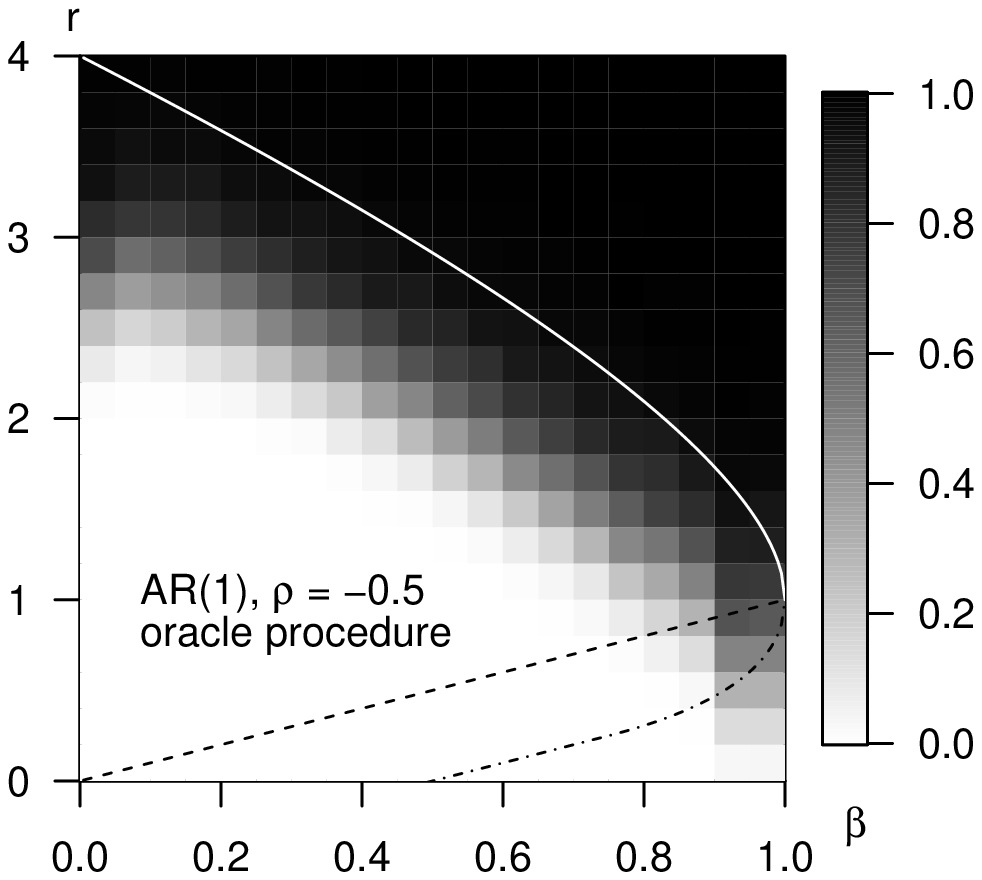}
    \includegraphics[width=0.4\textwidth]{./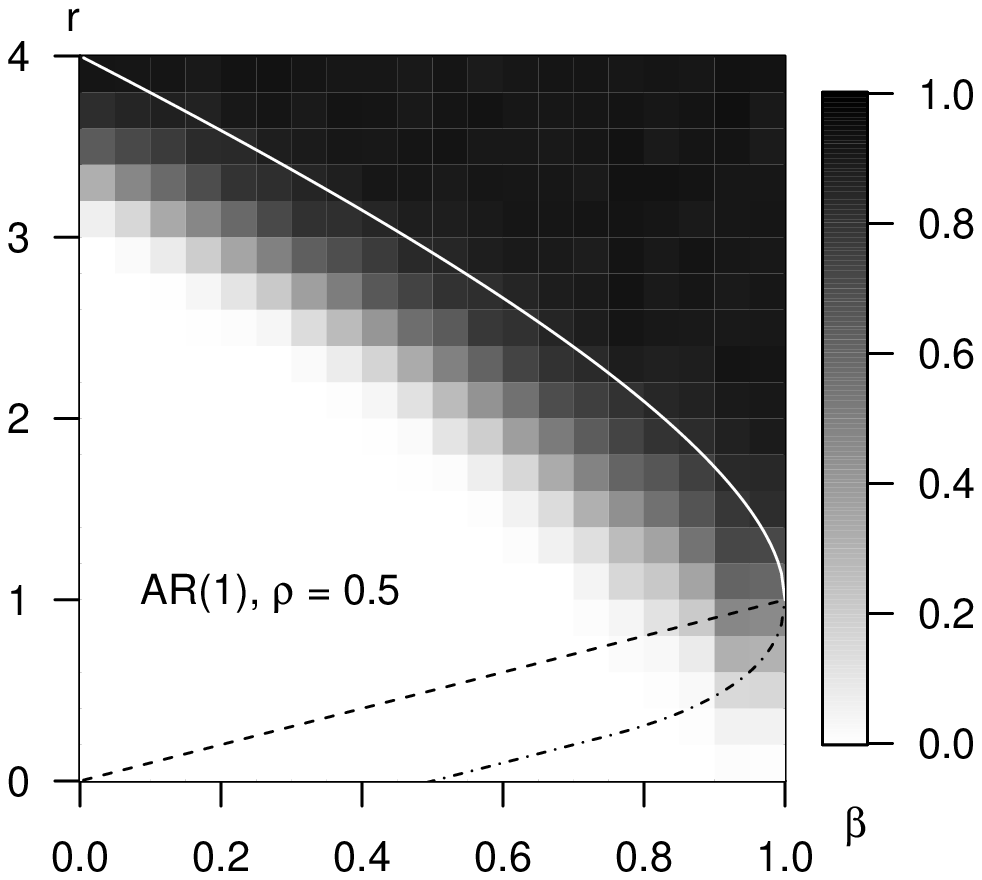}
    \includegraphics[width=0.4\textwidth]{./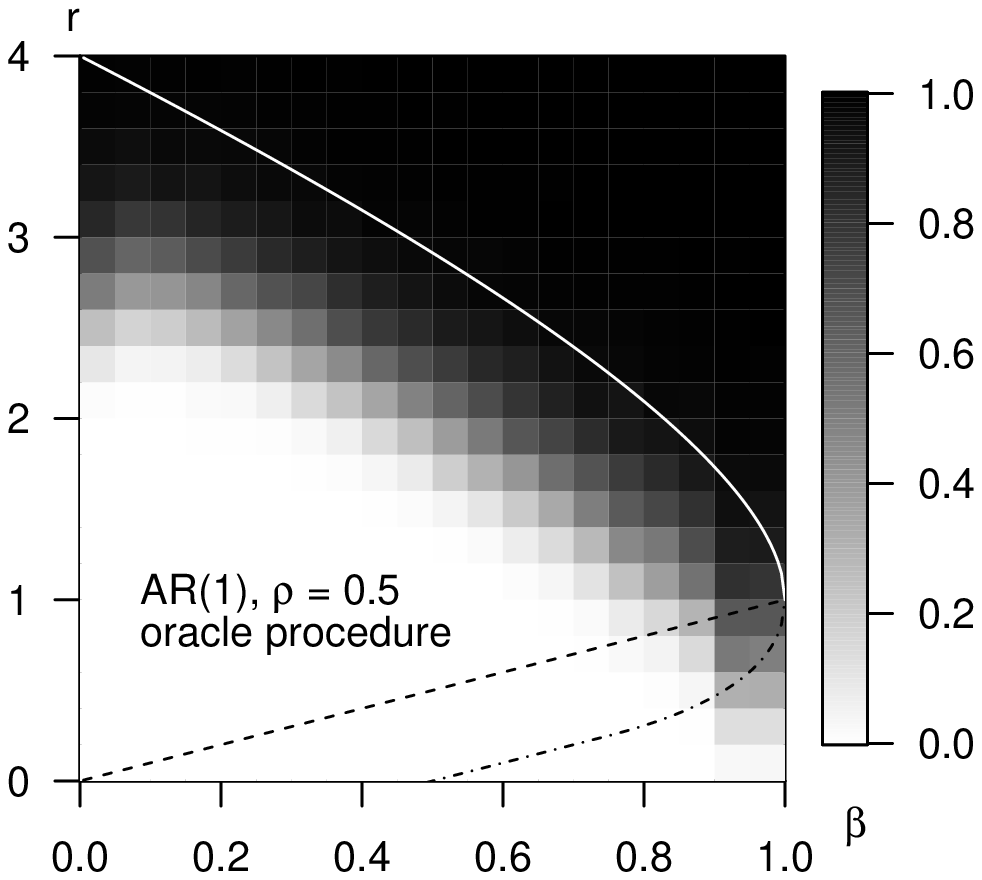}
    \includegraphics[width=0.4\textwidth]{./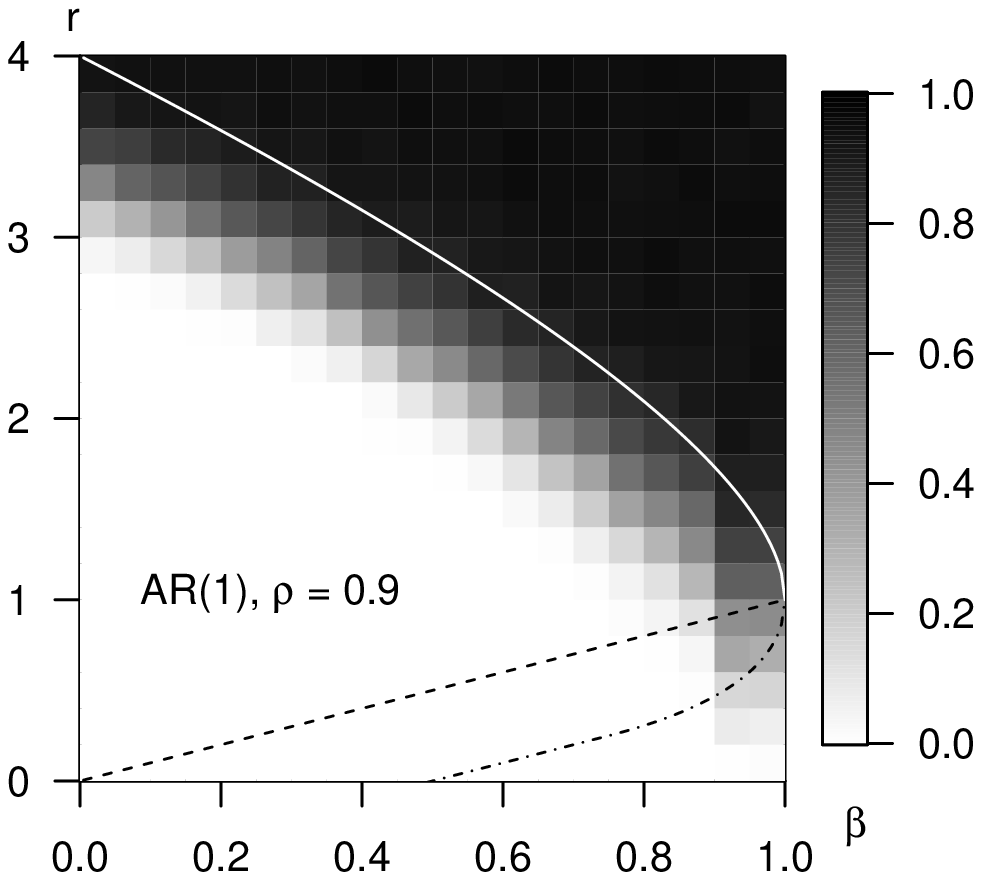}
    \includegraphics[width=0.4\textwidth]{./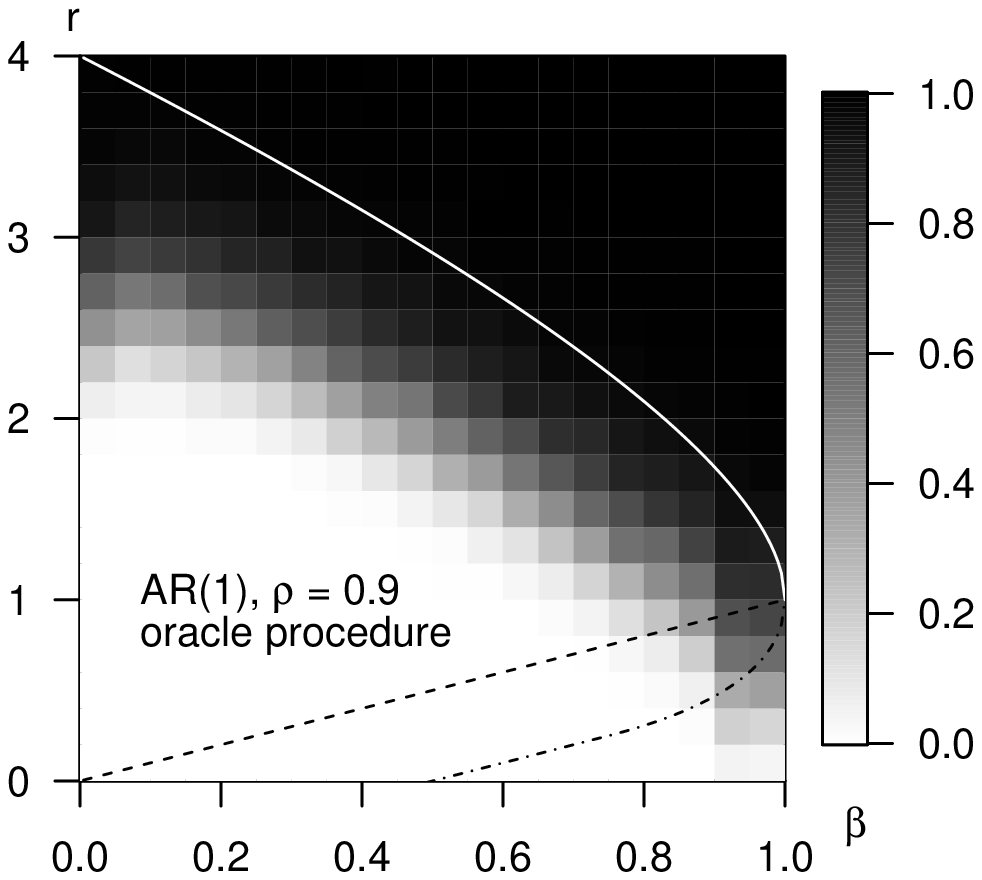}
    \caption{The empirical probability of exact support recovery from numerical experiments, as a function of sparsity level $\beta$ and signal sizes $r$. Darker colors indicate higher probability of exact support recovery. 
    Three AR(1) models with autocorrelation functions $(-0.5)^k$ (upper), 
    $0.5^k$ (middle), and $0.9^k$ (lower) are simulated.
    The experiments were repeated 1000 times for each sparsity-signal size combination.
    In finite dimensions ($p=10000$), the Bonferroni procedures (left) suffers small loss of power compared to the oracle procedures (right).
    A phase-transition in agreement with the predicted boundary \eqref{eq:strong-classification-boundary} can be seen in the AR models.
    The boundaries (solid, dashed, and dash-dotted lines) are as in Fig \ref{fig:phase-simulated}.
    See text for additional comments.}
    \label{fig:phase-simulated-dependent}
\end{figure}

\begin{figure}
    \centering
    \includegraphics[width=0.4\textwidth]{./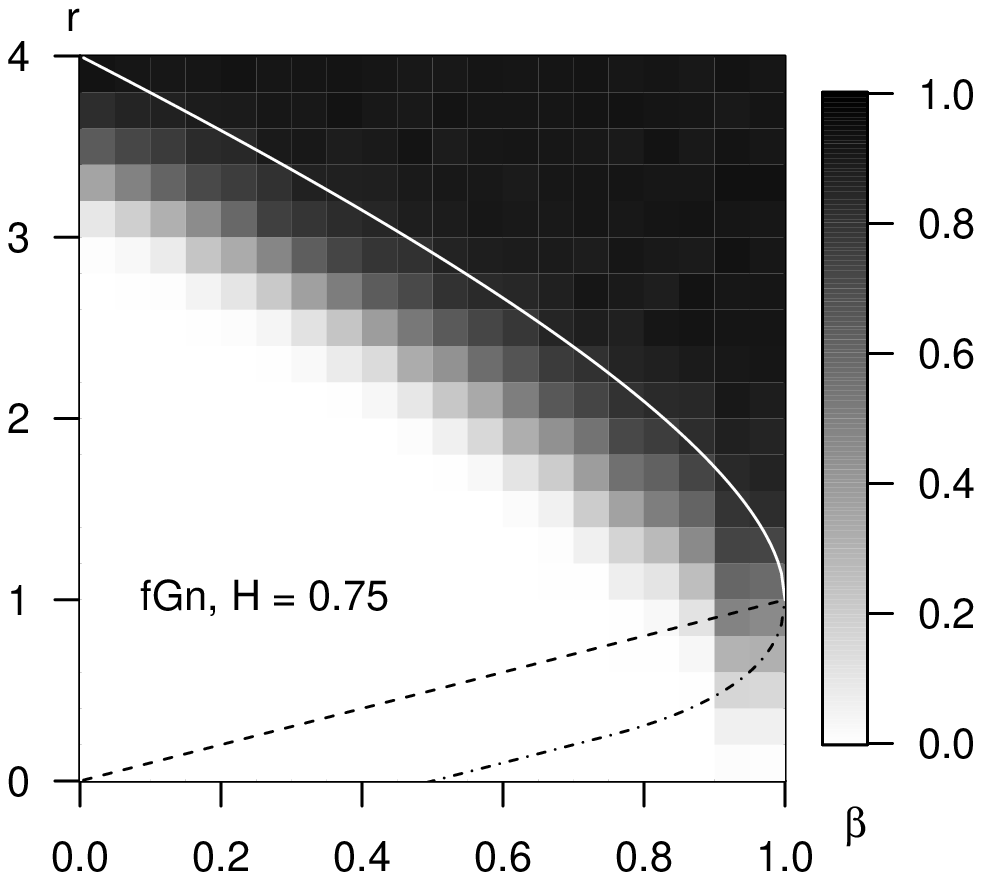}
    \includegraphics[width=0.4\textwidth]{./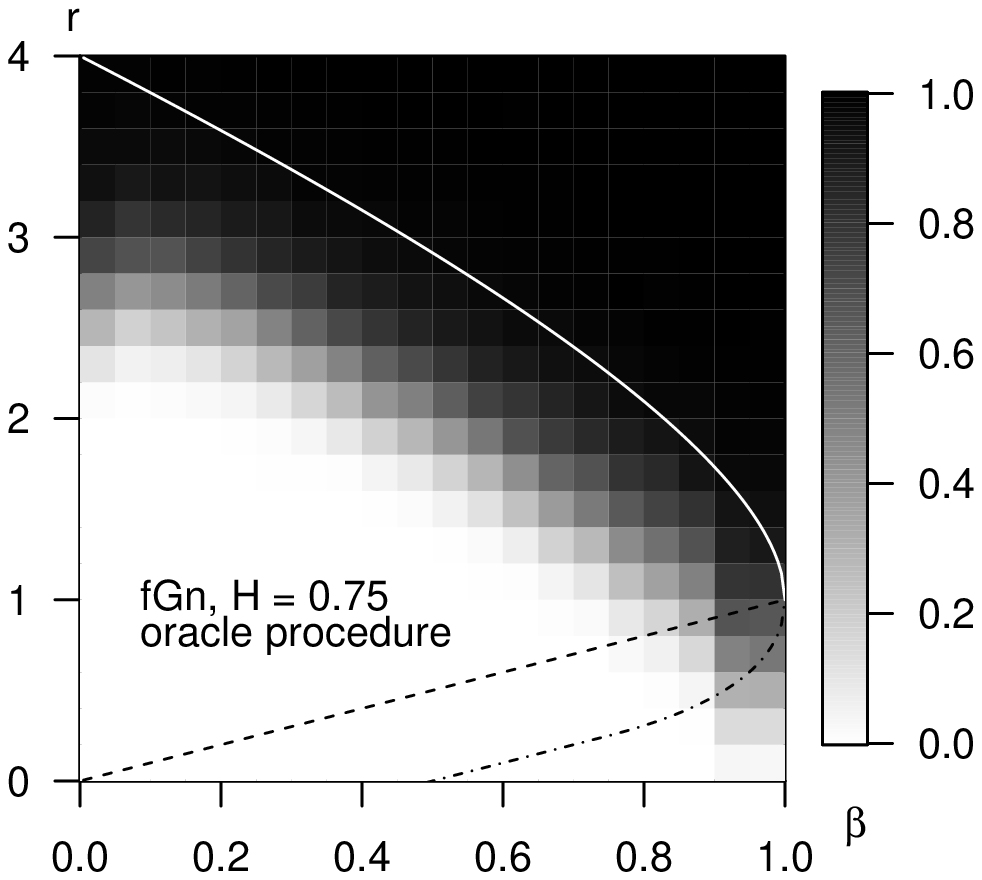}
    \includegraphics[width=0.4\textwidth]{./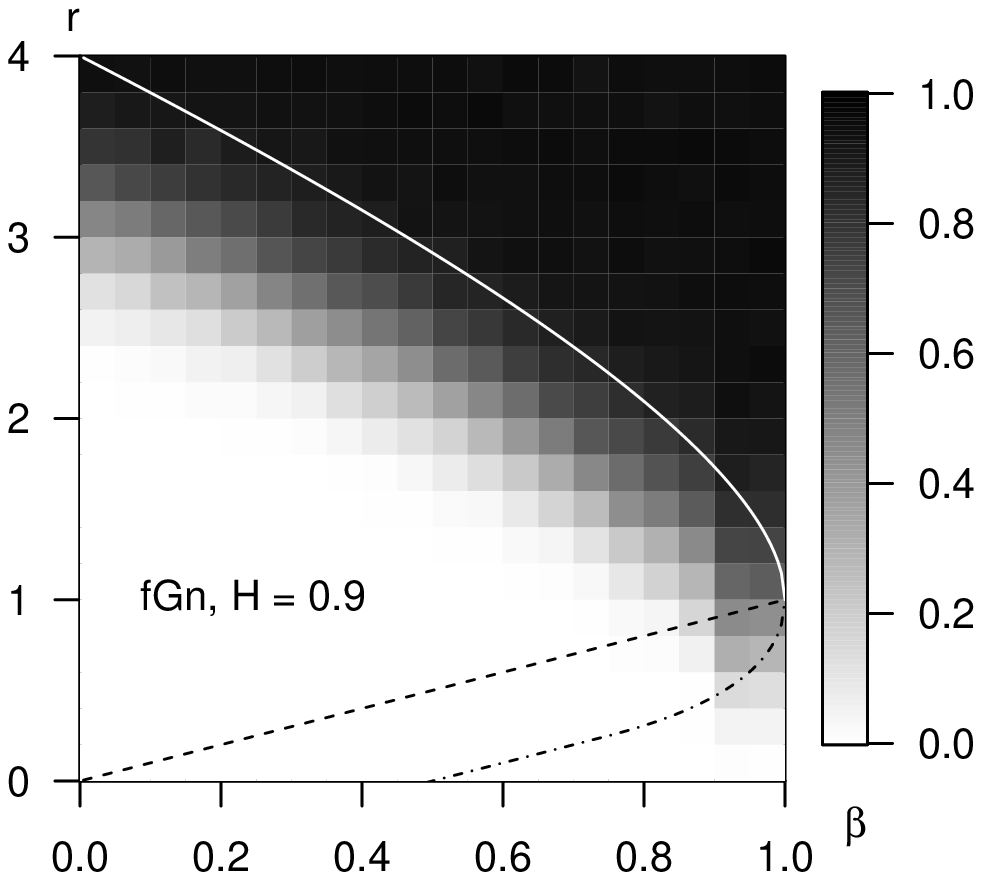}
    \includegraphics[width=0.4\textwidth]{./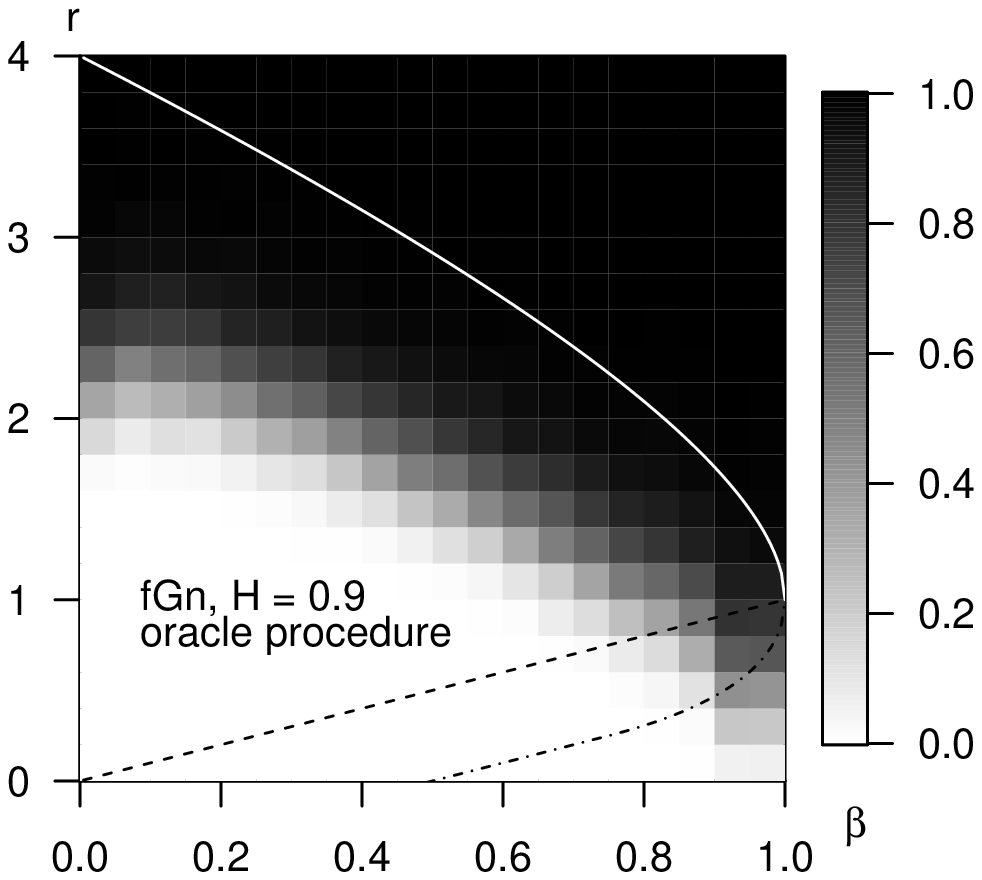}
    \includegraphics[width=0.4\textwidth]{./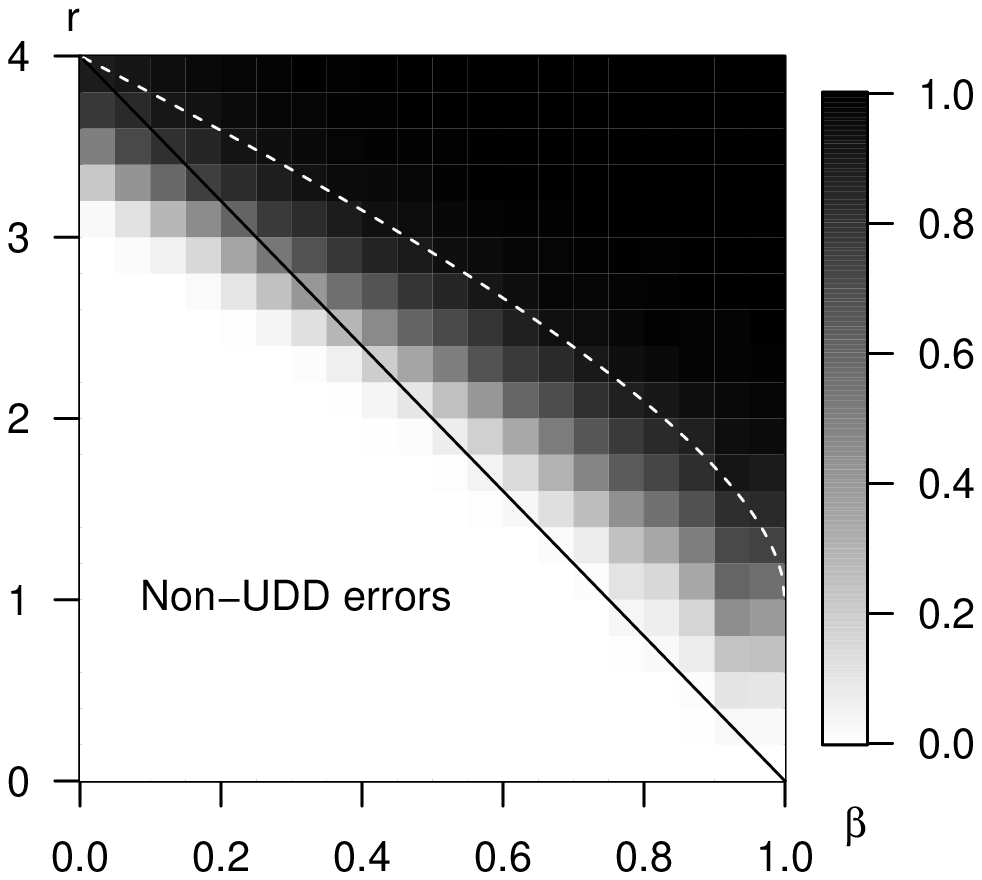}
    \includegraphics[width=0.4\textwidth]{./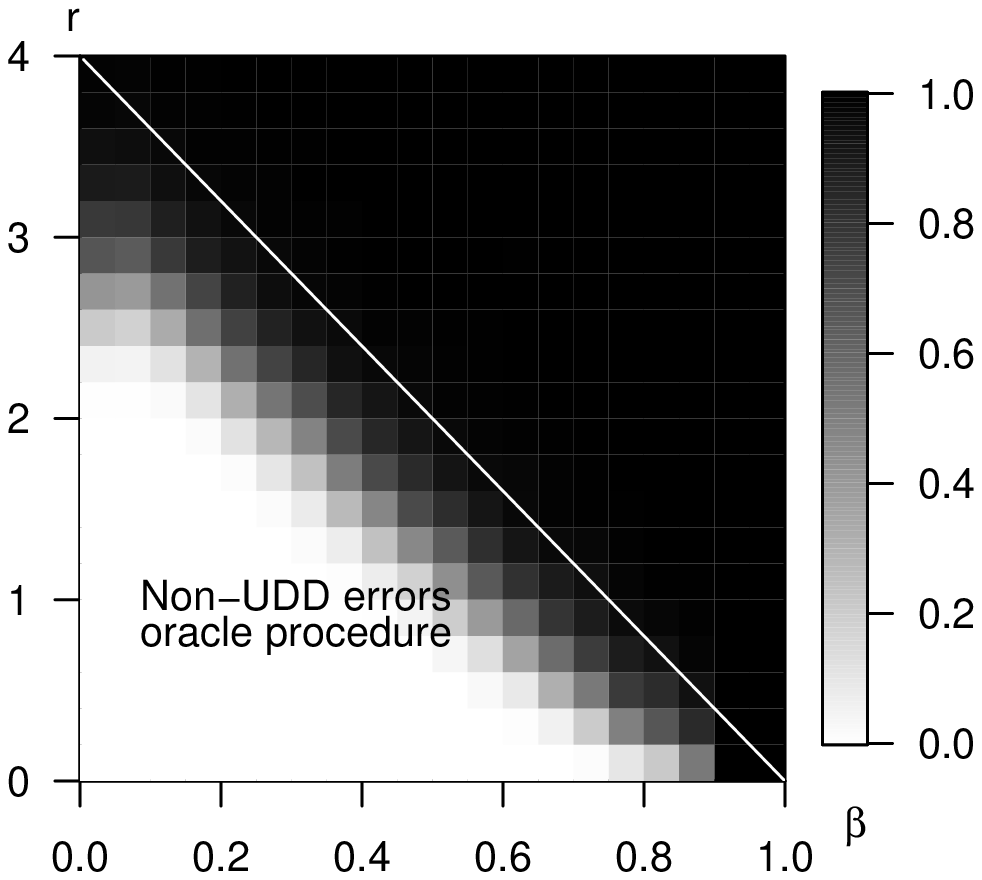}
    \caption{The empirical probability of exact support recovery from numerical experiments, as a function of sparsity level $\beta$ and signal sizes $r$. Darker colors indicate higher probability of exact support recovery. 
    Two fGn models with Hurst parameter $H = 0.75$ (upper), 
    $H = 0.9$ (middle), and the non-UDD errors in Example \ref{exmp:counter-example} (lower) are simulated.
    The experiments were repeated 1000 times for each sparsity-signal size combination.
    In finite dimensions ($p=10000$), the oracle procedures (right) is able to recover support for weaker signals than the Bonferroni procedures (left) when errors are heavily dependent, although they have the same phase-transition limit.
    The non-UDD errors demonstrate qualitatively different behavior, enabling support recovery for strictly weaker signals.
    The boundaries (solid, dashed, and dash-dotted lines) are as in Fig \ref{fig:phase-simulated}.
    In the non-UDD example, dashed lines represent the limit attained by Bonferroni's procedures.
    See text for additional comments.}
    \label{fig:phase-simulated-very-dependent}
\end{figure}

%\section{Proof of Theorem \ref{thm:necessary}}
%\label{sec:proof-necessary}
%\input{proofs-necessary_conditions}

\appendix

\section{Proofs}
\label{sec:proofs}
\subsection{Bounding the upper tails of AGG maxima}
\label{subsec:bounding-upper-tails-of-maxima}

\begin{proof}[Proof of Lemma \ref{lemma:AGG-maxima-upper-tails}]
Recall by \eqref{eq:AGG-quantiles} that 
\begin{equation*}
    u_q\sim\left(\nu\log{q}\right)^{1/\nu}, \quad q\to\infty,
\end{equation*}
so that
\begin{equation} %\label{eq:choice-of-c_p}
c_p  = \frac{u_{p\log{p}}}{u_p} -1 = \left(\frac{\log{p}+\log{\log{p}}}{\log{p}}\right)^{1/\nu}(1+o(1)) - 1 \rightarrow 0 \quad \mbox{as } p\to\infty.
\end{equation} 
By the union bound, we have
\begin{align}
    \P\left[\frac{M_p}{u_p} > 1+c_p\right] 
        &\le \sum_{j=1}^p \P\left[\frac{\epsilon_p(j)}{u_p} > 1+c_p\right] 
        = p \overline{F}\left(u_{p\log{p}}\right) \label{eq:AGG-maxima-upper-tails-proof-1} \\
        &= p \overline{F}\left(F^{\leftarrow}\left(1-\frac{1}{p\log{p}}\right)\right) \le \frac{1}{\log{p}} \rightarrow 0. \nonumber
\end{align}
where the last inequality follows from the fact that $F\left(F^{\leftarrow}(u)\right)\ge u$ for all $u\in[0,1]$.
\end{proof}

In addition to Lemma \ref{lemma:AGG-maxima-upper-tails}, which says the upper tail vanishes in probability, we will also prepare a result which states that the upper tail also vanishes in expectation.

\begin{lemma} \label{lemma:AGG-max-uniform-integrability}
Let $M_p$ and $c_p$ be as in Lemma \ref{lemma:AGG-maxima-upper-tails}, and denote 
$$
\xi_p := \frac{M_p}{(1+c_p)u_p}.
$$
Then there exists $p_0,t_0 > 0$, and absolute constant $C>0$ such that
\begin{equation} \label{eq:lemma-AGG-uniform-integrability}
    \P\left[\xi_p > t \right]\le \exp{\{-Ct^\nu\}}, \quad \text{for all} \quad p>p_0, t>t_0.
\end{equation}
In particular, the set of random variables $\{\left(\xi_p\right)_+,p\in\N\}$ is uniformly integrable.
\end{lemma}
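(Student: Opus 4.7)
The plan is to combine a union bound with the AGG tail asymptotics and the quantile formula, keeping careful track of how the $\log p$ factor absorbs the initial factor of $p$ from the union bound.

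First, by the union bound and stationarity of the marginal distribution,
\begin{equation*}
    \P[\xi_p > t] \;\le\; p\,\overline{F}\bigl((1+c_p)\,t\,u_p\bigr).
\end{equation*}
Next, since $F\in \text{AGG}(\nu)$, we have $-\log \overline{F}(x) \sim x^\nu/\nu$ as $x\to\infty$. Thus, for any fixed $\eta \in (0,1)$, there is $x_0=x_0(\eta)$ such that $\overline{F}(x)\le \exp\{-(1-\eta) x^\nu/\nu\}$ for all $x\ge x_0$. Similarly, by Proposition \ref{prop:quantile}, $u_p^\nu \sim \nu\log p$, so for $p$ large enough, $u_p^\nu \ge (1-\eta)\nu \log p$. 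Since $c_p\to 0$, we also have $(1+c_p)^\nu \ge 1-\eta$ for large $p$. Combining these three facts, for $p\ge p_0(\eta)$ and for $t\ge 1$ (ensuring the argument $(1+c_p)tu_p$ exceeds $x_0$),
\begin{equation*}
    \P[\xi_p > t] \;\le\; p\,\exp\!\left\{ -\tfrac{1-\eta}{\nu}(1+c_p)^\nu t^\nu u_p^\nu\right\}
    \;\le\; \exp\!\Bigl\{\bigl(1 - (1-\eta)^3 t^\nu\bigr)\log p\Bigr\}.
\end{equation*}

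Now pick $\eta$ small and set $a := (1-\eta)^3 > 1/2$, say. Choose $t_0$ so large that $at_0^\nu \ge 2$; then for all $t\ge t_0$ and $p\ge p_0$ (with $\log p \ge 1$), the coefficient $1-at^\nu$ is $\le -at^\nu/2$, so the right-hand side is at most $\exp\{-\tfrac{a}{2}t^\nu \log p\} \le \exp\{-\tfrac{a}{2}t^\nu\}$. This yields \eqref{eq:lemma-AGG-uniform-integrability} with $C = a/2$.

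Uniform integrability of $\{(\xi_p)_+ : p\in\N\}$ is then immediate: for $T \ge t_0$,
\begin{equation*}
    \E\bigl[(\xi_p)_+\,\mathbbm{1}_{\{(\xi_p)_+ > T\}}\bigr]
    \;=\; T\,\P[\xi_p > T] + \int_T^\infty \P[\xi_p > t]\,dt
    \;\le\; T e^{-CT^\nu} + \int_T^\infty e^{-Ct^\nu}\,dt,
\end{equation*}
which tends to $0$ as $T\to\infty$ uniformly in $p\ge p_0$; the finitely many $p<p_0$ are trivially uniformly integrable.

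The only minor delicacy is making sure the AGG tail estimate and the quantile asymptotic kick in for the same range of $p$ and $t$; this is handled by fixing $\eta$ first, then choosing $p_0$ and $t_0$ accordingly. There is no substantive obstacle beyond this bookkeeping.
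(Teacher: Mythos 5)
Your proof is correct and follows essentially the same approach as the paper: a union bound, the AGG tail inequality $-\log\overline{F}(x)\ge(1-\eta)x^\nu/\nu$, and the quantile asymptotic $u_p^\nu\sim\nu\log p$, with the factor of $p$ from the union bound absorbed because $t^\nu$ appears in front of $\log p$. The only cosmetic difference is that the paper invokes the exact identity $(1+c_p)u_p=u_{p\log p}$ and then splits $u_{p\log p}^\nu\sim\nu(\log p+\log\log p)$ into two pieces, whereas you bound $(1+c_p)^\nu$ and $u_p^\nu$ separately; both lead to the same bound and the same choice of $t_0,p_0$ depending only on the AGG approximation parameter.
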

\begin{proof}[Proof of Lemma \ref{lemma:AGG-max-uniform-integrability}]
Recalling that $(1+c_p)u_p = u_{p\log{p}}$, and by applying the union bound as in \eqref{eq:AGG-maxima-upper-tails-proof-1}, we have
\begin{align}
    \log \P\left[\xi_p > t\right] 
        &\le \log p + \log{\overline{F}\left(u_{p\log{p}}t\right)} \nonumber \\
        &\le \log p - \frac{1}{\nu}\left(u_{p\log{p}}t\right)^\nu(1-\delta). \label{eq:lemma-AGG-uniform-integrability-proof-1}
\end{align}
for $t > t_0(\delta)>0$, where $\delta\in(0,1)$ is an arbitrarily small number fixed in advance. 
This follows from the assumption that $F\in\text{AGG}(\nu)$ and the Definition \ref{def:AGG} of AGG tails.
Using in \eqref{eq:lemma-AGG-uniform-integrability-proof-1} the explicit expressions for quantiles in \eqref{eq:AGG-quantiles}, we obtain
\begin{equation} \label{eq:lemma-AGG-uniform-integrability-proof-2}
    \log \P\left[\xi_p > t\right] \le \log p - \underbrace{\left(1+o(1)\right)(1-\delta)t^\nu}_{\text{greater than }1\text{ for large }t}\log{p} - t^\nu\underbrace{\log{\log{p}}\left(1+o(1)\right)(1-\delta)}_{\text{greater than }C\text{ for large }p}.
\end{equation}
For large $t$, we have $\left(1+o(1)\right)(1-\delta)t^\nu > 1$, so that sum of the first two terms on the right-hand side of \eqref{eq:lemma-AGG-uniform-integrability-proof-2} is negative.
Also, for $p$ larger than some constant $p_0(\delta)$, we have $\log{\log{p}}\left(1+o(1)\right)(1-\delta) > C$ for some constant $C$ that does not depend on $p$.
Therefore \eqref{eq:lemma-AGG-uniform-integrability} holds for $t>t_0(\delta)$ and $p>p_0(\delta)$, and the proof is complete.
\end{proof}

\begin{corollary} \label{cor:AGG-max-upper-bound-expectation}
The upper tails of AGG maxima vanish in expectation, i.e.,
    \begin{equation} \label{eq:AGG-max-bound-upper-expectation}
    \E\left[\left(\frac{M_p}{u_p} - (1+c_p)\right)_+\right]
    \to 0 \quad\text{as }\; p\to\infty,
\end{equation}
where $(a)_+ := \max\{a,0\}$.
\end{corollary}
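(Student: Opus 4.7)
The plan is to combine the convergence in probability established in Lemma \ref{lemma:AGG-maxima-upper-tails} with the uniform integrability from Lemma \ref{lemma:AGG-max-uniform-integrability}, via the standard Vitali-type argument.

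First, I would rewrite the quantity of interest in terms of $\xi_p := M_p/((1+c_p)u_p)$ introduced in Lemma \ref{lemma:AGG-max-uniform-integrability}. Since $c_p \to 0$ (and in particular $(1+c_p)$ is bounded), we have
\[
\left(\frac{M_p}{u_p} - (1+c_p)\right)_+ = (1+c_p)(\xi_p - 1)_+ \le (1+c_p)(\xi_p)_+\mathbbm{1}\{\xi_p > 1\},
\]
where the inequality uses that $(\xi_p-1)_+$ is nonzero only when $\xi_p>1$, in which case $(\xi_p-1)_+ \le \xi_p = (\xi_p)_+$.

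Next, fix an arbitrary $\varepsilon>0$ and choose $K>1$ large enough so that the tail bound \eqref{eq:lemma-AGG-uniform-integrability} of Lemma \ref{lemma:AGG-max-uniform-integrability} yields
\[
\sup_{p>p_0}\E\bigl[(\xi_p)_+\,\mathbbm{1}\{\xi_p > K\}\bigr] < \varepsilon.
\]
Such a $K$ exists because the uniform sub-exponential tail bound $\P[\xi_p>t]\le \exp\{-Ct^\nu\}$ (for $p>p_0$ and $t>t_0$) makes $\{(\xi_p)_+\}$ uniformly integrable; concretely, one integrates by parts: $\E[(\xi_p)_+\mathbbm{1}\{\xi_p>K\}] = K\,\P[\xi_p>K] + \int_K^\infty \P[\xi_p>t]\,dt$, and both terms vanish as $K\to\infty$ uniformly in $p>p_0$.

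Now split the expectation:
\[
\E\bigl[(\xi_p)_+\mathbbm{1}\{\xi_p>1\}\bigr]
= \E\bigl[(\xi_p)_+\mathbbm{1}\{1<\xi_p\le K\}\bigr] + \E\bigl[(\xi_p)_+\mathbbm{1}\{\xi_p>K\}\bigr]
\le K\,\P[\xi_p>1] + \varepsilon.
\]
By Lemma \ref{lemma:AGG-maxima-upper-tails}, $\P[\xi_p>1] = \P[M_p/u_p > 1+c_p] \to 0$ as $p\to\infty$, so $\limsup_p K\,\P[\xi_p>1] = 0$. Hence
\[
\limsup_{p\to\infty} \E\left[\left(\frac{M_p}{u_p} - (1+c_p)\right)_+\right] \le (1+o(1))\,\varepsilon,
\]
and since $\varepsilon$ was arbitrary, the limit is zero. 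There is no real obstacle here: the two prior lemmas supply exactly the convergence-in-probability and uniform-integrability ingredients needed, and the argument reduces to the well-known fact that convergence in probability plus uniform integrability implies $L^1$ convergence of the positive part.
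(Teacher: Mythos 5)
Your proof is correct and uses essentially the same argument as the paper: both combine the convergence in probability from Lemma \ref{lemma:AGG-maxima-upper-tails} with the uniform integrability from Lemma \ref{lemma:AGG-max-uniform-integrability} to conclude $L^1$ convergence. The only difference is that you unpack the standard Vitali truncation argument explicitly (split at level $K$, bound the two pieces), whereas the paper first deduces uniform integrability of $\bigl(M_p/u_p - (1+c_p)\bigr)_+$ directly by tail domination and then invokes the u.i.-plus-convergence-in-probability result as a black box.
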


\begin{proof}[Proof of Corollary \ref{cor:AGG-max-upper-bound-expectation}]
Since $c_p\ge0$ is a sequence converging to 0, we have $c_p < 1$ for $p \ge p_0$. Hence for any $t>0$, we have
\begin{align}
    \P\left[\left(\frac{M_p}{u_p} - (1+c_p)\right)_+ > t\right] 
    &= \P\left[(1+c_p)\left(\xi_p-1\right)_+ > t\right] \nonumber \\
    &\le \P\left[\left(\xi_p-1\right)_+ > t/2\right] 
    \le \P\left[\xi_p > t/2 \right]. \label{eq:AGG-max-bound-upper-expectation-proof}
\end{align}
By Lemma \ref{lemma:AGG-max-uniform-integrability}, $\{\left(\xi_p\right)_+\}$ is u.i., therefore by Relation \eqref{eq:AGG-max-bound-upper-expectation-proof}, $\{\left(M_p/u_p - (1+c_p)\right)_+,\;p\in\N\}$ is u.i. as well.
Since by Lemma \ref{lemma:AGG-maxima-upper-tails}, $\left(M_p/u_p - (1+c_p)\right)_+\to 0$ in probability, Relation \eqref{eq:AGG-max-bound-upper-expectation} follows from the established uniform integrability.
\end{proof}

\subsection{Bounding the lower tails of Gaussian maxima}
\label{subsec:bounding-lower-tails-of-maxima}

The main goal of this section is to establish the following result. 

\begin{proposition} \label{prop:Gaussian-maxima-expectation-lower-bound}
For every UDD Gaussian array $\cal E$, and any sequence of subsets
$S_p\subseteq\{1,\ldots,p\}$ such that $q = q(p) = |S_p|\to \infty$, we have
\begin{equation} \label{eq:AGG-max-bound-expectation}
    \liminf_{p\to\infty} \E\left[\frac{M_{S_p}}{u_q}\right] \ge 1,
\end{equation}
where $M_S = \max_{j\in S}\epsilon(j)$.
\end{proposition}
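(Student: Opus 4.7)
The plan is to lower-bound $\E[M_{S_p}]$ by first carving out from $S_p$ a large subset $T_p$ whose pairwise correlations are uniformly small (exploiting UDD), and then applying the Sudakov-Fernique minorization inequality to compare $\max_{j\in T_p}\epsilon_p(j)$ with the maximum of an iid Gaussian sample of the same size.

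Fix $\delta\in(0,1)$ and build an auxiliary graph $G_p$ on vertex set $S_p$ by placing an edge between distinct $i,j$ whenever $\Sigma_p(i,j)>\delta$. By UDD every vertex of $G_p$ has degree at most $N(\delta)<\infty$, so a greedy independent-set extraction (repeatedly pick a vertex, keep it, and delete it together with its neighbors) produces $T_p\subseteq S_p$ with
\[
|T_p|\ \ge\ \frac{q}{N(\delta)+1}, \qquad \Sigma_p(i,j)\ \le\ \delta \ \text{ for all distinct }i,j\in T_p.
\]
Next introduce iid $Z_j\sim N(0,1)$, $j\in T_p$, and set $Y_j:=\sqrt{1-\delta}\,Z_j$. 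Unit variances give
\[
\E[(\epsilon_p(i)-\epsilon_p(j))^2]\ =\ 2(1-\Sigma_p(i,j))\ \ge\ 2(1-\delta)\ =\ \E[(Y_i-Y_j)^2]
\]
for every distinct $i,j\in T_p$, so the Sudakov-Fernique inequality yields
\[
\E[M_{S_p}]\ \ge\ \E\Big[\max_{j\in T_p}\epsilon_p(j)\Big]\ \ge\ \sqrt{1-\delta}\,\E\Big[\max_{j\in T_p}Z_j\Big].
\]

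To close the argument I use the classical iid asymptotic $\E[\max_{j\le n}Z_j]\sim\sqrt{2\log n}\sim u_n$, which follows from convergence in probability (Gnedenko) together with uniform integrability of the positive part of the normalized maximum, along the lines of Lemma \ref{lemma:AGG-max-uniform-integrability} specialized to $\nu=2$. Since $|T_p|\ge q/(N(\delta)+1)$ with $N(\delta)$ constant in $p$, we have $\log|T_p|/\log q\to 1$, so that
\[
\liminf_{p\to\infty}\frac{\E[M_{S_p}]}{u_q}\ \ge\ \sqrt{1-\delta}.
\]
Letting $\delta\downarrow 0$ finishes the proof.

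The main obstacle lies in the extraction step. It is crucial that $|T_p|$ be comparable to $q$ (at worst polynomial in $q$ with exponent arbitrarily close to $1$), for otherwise $\log|T_p|/\log q$ would not tend to $1$ and the final bound would fall short of $\sqrt{1-\delta}$. This is precisely where UDD is used in an essential way: the degree bound $N(\delta)$ is uniform in $p$ and in the vertex, which is what lets the greedy extraction deliver a near-full-size subset. A Ramsey-type extraction like Proposition \ref{prop:lower-bound-correlation-Ramsey} would only produce $|T_p|\sim\log q$, collapsing $\log|T_p|/\log q$ to $0$ and making the approach fail. A secondary, more routine concern is the uniform integrability needed to upgrade the iid Gaussian result from convergence in probability to convergence of expectations.
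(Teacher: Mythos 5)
Your proposal follows the paper's proof almost step for step: you exploit UDD to greedily carve out from $S_p$ a subset $T_p$ of size at least $q/(N(\delta)+1)$ whose pairwise correlations are all at most $\delta$ (the paper phrases this as a $\gamma$-packing of $S_p$ in the canonical metric with $\gamma=\sqrt{2(1-\delta)}$, which is the same thing), you compare $\epsilon\restriction T_p$ against the iid process $\sqrt{1-\delta}\,Z_j$ via Sudakov--Fernique, and you note that $|T_p|$ being of order $q$ makes $u_{|T_p|}/u_q\to1$. You even correctly identify why a Ramsey-type extraction would be useless here; the paper reserves Ramsey for the converse direction (URS $\Rightarrow$ UDD), where it is needed.

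The one place where you deviate is the final iid lower bound $\liminf_n\E[\max_{j\le n}Z_j]/u_n\ge 1$. The paper handles this with Lemma~\ref{lemma:expectation-lower}, a direct Jensen/convexity argument using the eventually decreasing density of $F$. You instead invoke convergence in probability plus uniform integrability of the positive part of $M_n/u_n$. That route is fine, but as written it is incomplete: UI of the positive part together with convergence in probability only controls $\E[(M_n/u_n)_+]$; for the lower bound on $\E[M_n/u_n]=\E[(M_n/u_n)_+]-\E[(M_n/u_n)_-]$ you must separately show $\E[(M_n/u_n)_-]\to0$. This is easy (since $M_n\ge Z_1$ pointwise, $(M_n)_-\le |Z_1|$, so $\E[(M_n/u_n)_-]\le\E|Z_1|/u_n\to 0$), but it should be stated. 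Modulo that one-line addition your argument is correct and gives the same conclusion.
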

% We suppressed dependence on $p$ for both $S = S_p$ and $s = s(p) = |S_p|$ for convenience of notation.

Lemma \ref{lemma:Gaussian-maxima-lower-expectation}, which is the key to the proof of the `if' part of Theorem \ref{thm:Gaussian-weak-dependence}, follows immediately from this proposition.

\begin{proof}[Proof of Lemma \ref{lemma:Gaussian-maxima-lower-expectation}]
We start with the identity
$$
\E\left[\frac{M_{S_p}}{u_q}-(1+c_q)\right] = \E\left[\left(\frac{M_{S_p}}{u_q}-(1+c_q)\right)_+\right] - \E\left[\left(\frac{M_{S_p}}{u_q}-(1+c_q)\right)_-\right].
$$
By re-arranging terms and taking limsup/liminf, we obtain
\begin{align} \label{e:lim-sup-vanishes}
    0 \le &\limsup_{p\to\infty}\E\left[\left(\frac{M_{S_p}}{u_q}-(1+c_q)\right)_-\right]\nonumber \\
        \le & \limsup_{p\to\infty}\E\left[\left(\frac{M_{S_p}}{u_q}-(1+c_q)\right)_+\right] - \liminf_{p\to\infty}\E\left[\frac{M_{S_p}}{u_q}-(1+c_q)\right]\\
        = & - \liminf_{p\to\infty}\E\left[\frac{M_{S_p}}{u_q}-(1+c_q)\right],
        \label{e:lim-inf-bound}
\end{align}
where the last equality follows from the fact that the lim-sup in \eqref{e:lim-sup-vanishes} vanishes by Corollary \ref{cor:AGG-max-upper-bound-expectation}.
On the other hand, since $c_q\to 0$, we have
$$
\liminf_{p\to\infty}\E\left[\frac{M_{S_p}}{u_q}-(1+c_q)\right] 
= \liminf_{p\to\infty}\E\left[\frac{M_{S_p}}{u_q}-1\right] \ge 0,
$$
where the last inequality follows from Proposition \ref{prop:Gaussian-maxima-expectation-lower-bound}.  This shows that 
the right-hand side of \eqref{e:lim-inf-bound} is non-positive and hence
\eqref{eq:Gaussian-maxima-lower-expectation} holds. 
\end{proof}

We now prove Proposition \ref{prop:Gaussian-maxima-expectation-lower-bound}. This is where the UDD dependence assumption is used.

\begin{proof}[Proof of Proposition \ref{prop:Gaussian-maxima-expectation-lower-bound}]
% Assume without loss of generality that the $\epsilon(j)$'s have unit variances.
Define the canonical (pseudo) metric on $S_p$,
$$
d(i,j) = \sqrt{\E\left[\left(\epsilon(i)-\epsilon(j)\right)^2\right]}.
$$
It can be easily checked that the canonical metric takes values between 0 and 2.
For arbitrary $\delta\in(0,1)$, take $\gamma = \sqrt{2(1-\delta)}$, and
let $\mathcal{N}$ be a $\gamma$-packing of $S_p$. That is, let $\mathcal{N}$ be a subset of $S_p$, such that for any $i,j\in\mathcal{N}$, $i\neq j$, we have $d(i,j)\ge\gamma$, i.e.,
\begin{equation}\label{e:gamma-packing-def}
d(i,j) = \sqrt{2\left(1-\Sigma_p(i,j)\right)} \ge \gamma = \sqrt{2(1-\delta)},
\end{equation}
or equivalently, $\Sigma_p(i,j) \le \delta$.
We claim that we can find a $\gamma$-packing $\mathcal{N}$ whose number of elements is at least 
\begin{equation} \label{eq:packing-number-lower-bound}
    |\mathcal{N}| \ge q/N(\delta).
\end{equation}
Indeed, $\mathcal{N}$ can be constructed iteratively as follows:
\begin{itemize}[
    align=left,
    leftmargin=4em,
    itemindent=0pt,
    labelsep=0pt,
    labelwidth=4em
    ]
    \raggedright
    \item[{\bf Step 1:}] Set $S_p^{(1)}:=S_p$ and $\mathcal{N}:=\{j_1\}$, where $j_1\in S_p^{(1)}$ is an arbitrary element. Set $k:=1$.\\
    \item[{\bf Step 2:}] Set $S_p^{(k+1)}:=S_p^{(k)}\setminus B_\gamma(j_k)$, where
    $$
    B_\gamma(j_k) := \{i\in S_p: \;d(i,j_k) < \gamma \equiv \sqrt{2(1-\delta)}\}.
    $$
    \item[{\bf Step 3:}] If $S_p^{(k)} \neq \emptyset$, pick an arbitrary $j_{k+1}\in S_p^{(k)}$, set $\mathcal{N}:=\mathcal{N}\cup\{j_{k+1}\}$, and $k:=k+1$, go to step 2; otherwise, stop.
\end{itemize}
By the definition of UDD (see Definition \ref{def:UDD}), there are at most $N(\delta)$ coordinates whose covariance with $\epsilon(j)$ exceed $\delta$. 
Therefore at each iteration, $\left|B_\gamma(j_k)\right|\le N(\delta)$, and hence
$$
\left|S_p^{(k+1)}\right| \ge \left|S_p^{(k)}\right| - \left| B_\gamma(j_k)\right| \ge q - kN(\delta).
$$
The construction can continue for at least $q/N(\delta)$ iterations, and we have $|\mathcal{N}| \ge \lfloor q/N(\delta) \rfloor$ as desired.
    
Now we define on this $\gamma$-packing $\mathcal{N}$ an independent Gaussian process $\left(\eta(j)\right)_{j\in\mathcal{N}}$, 
$$\eta(j) = \frac{\gamma}{\sqrt{2}}Z(j) \quad j\in \mathcal{N},$$
where $Z(j)$'s are i.i.d. standard normal random variables.
Observe that by the definition of $\gamma$-packing in \eqref{e:gamma-packing-def}, the increments of the new process are smaller than those of the original process in the following sense, 
$$
\E\left[\left(\eta(i)-\eta(j)\right)^2\right] = \gamma^2 \le d^2(i,j) = \E\left[\left(\epsilon(i)-\epsilon(j)\right)^2\right]
$$
for all $i\neq j$, $i,j\in\mathcal{N}$. Applying the Sudakov-Fernique inequality (see, e.g., Theorem 2.2.3 in \citep{adler2009random}) to  $\left(\eta(j)\right)_{j\in\mathcal{N}}$ and  $\left(\epsilon(j)\right)_{j\in\mathcal{N}}$, we have
\begin{equation}\label{eq:Sudakov-1}
\E\left[\max_{j\in\mathcal{N}}\eta(j)\right] \le \E\left[\max_{j\in\mathcal{N}}\epsilon(j)\right] \le \E\left[\max_{j\in{S_p}}\epsilon(j)\right].
\end{equation}
Since the $\left(\eta(j)\right)_{j\in\mathcal{N}}$ are independent Gaussians, Lemma \ref{lemma:expectation-lower} yields the lower bound,
\begin{equation}\label{eq:Sudakov-2}
\liminf_{p\to\infty} \E\left[\frac{\max_{j\in\mathcal{N}}\eta(j)}{u_{|\mathcal{N}|}}\right] \ge \frac{\gamma}{\sqrt{2}} = \sqrt{1-\delta}.
\end{equation}
Using the expressions \eqref{eq:AGG-quantiles} for the quantiles 
of AGG models (with $\nu=2$ here), we have
\begin{equation}\label{eq:Sudakov-3}
\frac{u_{|\mathcal{N}|}}{u_q} 
\ge \left(\frac{\log q-\log{N(\delta)}}{\log{q}}\right)^{1/2}\left(1+o(1)\right)\to 1,
\end{equation}
since $N(\delta)$ does not depend on $q= q(p)\to \infty$, and that $|\mathcal{N}|\ge q/N(\delta)$.

By combining \eqref{eq:Sudakov-1}, \eqref{eq:Sudakov-2} and \eqref{eq:Sudakov-3}, we conclude that
\begin{align*}
    \liminf_{p\to\infty} \E\left[\frac{\max_{j\in{S_p}}\epsilon(j)}{u_q}\right] 
    &\ge \liminf_{p\to\infty} \E\left[\frac{\max_{j\in\mathcal{N}}\eta(j)}{u_q}\right] &\text{by } \eqref{eq:Sudakov-1}\\
    &\ge \liminf_{p\to\infty} \E\left[\frac{\max_{j\in\mathcal{N}}\eta(j)}{u_{|\mathcal{N}|}}\right]  &\text{by } \eqref{eq:Sudakov-3} \\
    &\ge \sqrt{1-\delta}.  &\text{by } \eqref{eq:Sudakov-2}
\end{align*} 
Since $\delta>0$ is arbitrary, \eqref{eq:AGG-max-bound-expectation} follows as desired.
\end{proof}

% The current proof of Theorem \ref{thm:Gaussian-weak-dependence} relies on Slepian's lemma \cite{slepian1962one}. 
% \begin{lemma}[Slepian's Lemma] \label{lemma:Slepian}
% For two zero-mean Gaussian sequences $\left(\eta(j)\right)_{j=1}^p$ and $\left(z(j)\right)_{j=1}^p$, if
% $$
% \E[\eta(j)^2] \le \E[z(j)^2] \quad \text{and} \quad \E[\eta(i)\eta(j)] \le \E[z(i)z(j)]
% $$
% for all $i,j\in \{1,\ldots,p\}$, then 
% $$
% \P\left[\max_{j\in\{1,\ldots,p\}}\eta(j) \le u\right] \ge \P\left[\max_{j\in\{1,\ldots,p\}}z(j) \le u\right]
% $$
% for all $u\in\mathbb R$.
% \end{lemma}

\subsection{Proof of the claims in Examples \ref{exmp:FWER-controlling_procedures} and \ref{exmp:counter-example}}
\label{subsec:proofs-examples}

\begin{proof}[Proof of claims in Example \ref{exmp:FWER-controlling_procedures}]
By the Mill's ratio for the standard Gaussian distribution,
$$
\frac{t_p \P\left[Z>t_p\right]}{\phi(t_p)} \to 1,\quad \text{as}\quad t_p\to\infty,
$$
where $Z\sim \text{N}(0,1)$. 
Using the expression for $t_p = \sqrt{2\log{p}}$, we have
$$
p \;\P\left[Z>t_p\right] \sim \sqrt{2\pi}^{-1}\left(2\log{p}\right)^{-1/2} \to 0,
$$
as desired. The rest of the claims follow from Corollary \ref{cor:FWER-controlling_procedures}.
\end{proof}

\begin{proof}[Proof of claims in Example \ref{exmp:counter-example}]
Recall that $\widehat{S}^* = \{j:x(j)>t_p^*\}$, where $t_p^* = \sqrt{2(1-\beta)\log{p}}$. 
Analogous to \eqref{eq:Bonferroni-FWER-control} in the proof of Theorem \ref{thm:sufficient}, we have
\begin{align*}
    \P\left[\widehat{S} \subseteq S\right] 
        &= 1 - \P\left[\max_{j\in S^c}x(j) > t_p^*\right] 
        = 1 - \P\left[\max_{j\in S^c}\epsilon(j) > t_p^*\right] \nonumber \\
      % \ge 1 - \P\left[\max_{j\in\{1,\ldots,p\}}\epsilon(j) > t_p\right] \nonumber \\
        &\ge 1 - \P\left[\max_{j\in[p]}\epsilon(j) > t_p^*\right] 
        \ge 1 - \P\left[\max_{j\in\{1,\ldots,\lfloor p^{1-\beta}\rfloor\}}\widetilde{\epsilon}(j) > t_p^*\right]
\end{align*}
where $\left(\widetilde{\epsilon}\right)_{j=1}^{\lfloor p^{1-\beta}\rfloor}$'s are independent Gaussian errors; in the last inequality we used the assumption that there are at most $\lfloor p^{1-\beta}\rfloor$ independently distributed Gaussian errors in $\left(\epsilon_p(j)\right)_{j=1}^p$.
By Example \ref{exmp:FWER-controlling_procedures} (with $\lfloor p^{1-\beta}\rfloor$ taking the role of $p$), we know that the FWER goes to 0 at a rate of 
$\left(2\log{\lfloor p^{1-\beta}\rfloor}\right)^{-1/2}$.
Therefore, the probability of no false inclusion converges to 1.

On the other hand, since the signal sizes are no smaller than $(\nu\underline{r}\log p)^{1/\nu}$, similar to \eqref{eq:sufficient-proof-eq1}, we obtain
\begin{align}
    \P\left[\widehat{S} \supseteq S\right] 
    &\ge \P\left[\min_{j\in S}\epsilon(j) > \sqrt{2(1-\beta)\log{p})} - \sqrt{2\underline{r}\log{p}} \right] \nonumber \\
    &= \P\left[\max_{j\in S}\left(-\epsilon(j)\right) < \sqrt{2\log{p})}\left(\sqrt{\underline{r}}-\sqrt{1-\beta}\right) \right] \nonumber \\
    &= \P\left[\frac{\max_{j\in S}(-\epsilon(j))}{u_{|S|}} < \frac{\sqrt{\underline{r}}-\sqrt{1-\beta}}{\sqrt{1-\beta}}\left(1+o(1)\right) \right], \label{eq:sufficient-proof-counter-example}
\end{align}
where in the last line we used the quantiles \eqref{eq:AGG-quantiles}.
Since the minimum signal size is bounded below by $\underline{r} > 4(1-\beta)$, the right-hand-side of the inequality in \eqref{eq:sufficient-proof-counter-example} converges to a constant strictly larger than 1. While the left-hand-side, by Slepian's Lemma \cite{slepian1962one}, is stochastically smaller than a r.v. going to 1, i.e.,
\begin{equation}
  \frac{1}{u_{|S|}} \max_{j\in S}(-\epsilon(j)) \stackrel{d}{\le} \frac{1}{u_{|S|}} \max_{j\in S)} \epsilon^*(j) \stackrel{\P}{\longrightarrow} 1,
\end{equation}
where $\left({\epsilon^*}\right)_{j=1}^{\lfloor p^{1-\beta}\rfloor}$'s are independent Gaussian errors.
Therefore the probability in \eqref{eq:sufficient-proof-counter-example} must also converge to 1.
\end{proof}

\subsection{Proofs of Lemma \ref{lemma:optimal-oracle-procedures} and Lemma \ref{lemma:likelihood-ratio-thresholding}}

\begin{proof}[Proof of Lemma \ref{lemma:optimal-oracle-procedures}]
The problem of support recovery can be equivalently stated as a classification problem, where the discrete parameter space is $\mathcal{S} = \{S\subseteq[p]:|S|=s\}$, and the observation $x \in\R^p$ has likelihood $f(x|S)$ indexed by the support set $S$.

By the optimality of the Bayes classifier (see, e.g., \citep{domingos1997optimality}), a set estimator that maximizes the probability of support recovery is such that
$$
\widehat{S} \in \argmax_{S\in \mathcal{S}} f(x|S) \pi(S).
$$
Since we know from \eqref{eq:uniform} that $\pi(S)$ is uniform, the problem in our context reduces to showing that $f(x|\widehat{S}^*) = f(x|\widehat{S})$, where $f(x|S)$ is the conditional distribution of data given the unordered support $S$,
$$
f(x|S) 
= \sum_{P\in\sigma(S)} f(x|P) \pi^{\text{ord}}(P|S) 
= \frac{1}{s!} \left(\sum_{P\in\sigma{(S)}} \prod_{i=1}^s {f_{i}(x(P(i)))}\right) \prod_{k\not\in S}{f_0(x(k))},
$$
where $\sigma(S)$ is the set of all permutations of the indices in the support set $S$.

Suppose $\widehat{S} \neq \widehat{S}^*$, then there must be indices $j \in \widehat{S}$ and $j' \in \widehat{S}^c$ such that $x(j) \le x(j')$.
We exchange the labels of $x(j)$ and $x(j')$, and form a new estimate $\widehat{S}\,' = \big(\widehat{S}\setminus\{j\}\big)\cup\{j'\}$.
Comparing the likelihoods under $\widehat{S}$ and $\widehat{S}\,'$, we have
\begin{align}
    f(x|\widehat{S}) - f(x|\widehat{S}\,') 
    &= \frac{1}{s!} \sum_{P\in\sigma{(\widehat{S})}} \prod_{i=1}^s {f_{i}(x(P(i)))} f_0(x(j'))\prod_{k\not\in \widehat{S}\cup\{j'\}}{f_0(x(k))} - \nonumber \\
    &\quad\quad\quad - \frac{1}{s!} \sum_{P'\in\sigma{(\widehat{S}')}} \prod_{i=1}^s {f_{i}(x(P'(i)))} f_0(x(j)) \prod_{k\not\in \widehat{S}'\cup\{j\}}{f_0(x(k))} \nonumber \\
    &= \frac{1}{s!} \left(\sum_{i=1}^s a_i  \Big(f_i(x(j)) f_0(x(j')) - f_i(x(j')) f_0(x(j))\Big) \right) \prod_{k\not\in \widehat{S}\cup\{j'\}}{f_0(x(k))}, \label{eq:MLR-optimality-proof}
    % = \log{f_\delta(x(j))} + \log{f_0(x(j'))} - \log{f_\delta(x(j'))} - \log{f_0(x(j))} 
\end{align}
where the last equality follows by first summing over all permutations fixing $P(i) = j$ and $P'(i) = j'$, and setting $a_i = \sum_{P\in\sigma{(\widehat{S}\setminus\{j\})}} \prod_{i'\neq i} {f_{i'} (x(P(i')))}$. Notice that the $a_i$'s are non-negative.

Since $x(j) \le x(j')$, and that each of $\{f_0, f_{i}\}$ is an MLR family, we have
$$
\frac{f_i(x(j))}{f_0(x(j))} - \frac{f_i(x(j'))}{f_0(x(j'))} \le 0 \implies f_i(x(j)) f_0(x(j')) - f_i(x(j')) f_0(x(j)) \le 0.
$$
Using Relation \eqref{eq:MLR-optimality-proof}, we conclude that $f(x|\widehat{S}) \le f(x|\widehat{S}\,')$.
This implies that any estimator that is not $\widehat{S}^*$ may be improved, and the optimality follows.
\end{proof}

\begin{lemma} \label{lemma:four-point-concavity}
Let $\phi$ be any concave function on $\R$. For any $x<y\in\R$, and $\delta>0$ we have
$$
\phi(x) + \phi(y+\delta) \le \phi(y) + \phi(x+\delta).
$$
\end{lemma}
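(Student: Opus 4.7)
The plan is to interpret the inequality as a statement that the finite-difference operator $\phi(\,\cdot\,+\delta) - \phi(\,\cdot\,)$ is non-increasing, which is a classical consequence of concavity. The cleanest route is to observe that the two ``middle'' points $y$ and $x+\delta$ both lie in the interval $[x,\,y+\delta]$ (irrespective of whether $y\le x+\delta$ or $y>x+\delta$), and then write each of them as a convex combination of the two ``outer'' points $x$ and $y+\delta$.

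First I would set $L := y+\delta - x > 0$ and introduce the weight $\lambda := \delta/L \in (0,1)$. A short computation gives
\begin{equation*}
    y \;=\; \lambda\, x + (1-\lambda)\,(y+\delta), \qquad x+\delta \;=\; (1-\lambda)\, x + \lambda\,(y+\delta),
\end{equation*}
so the two representations use complementary weights $\lambda$ and $1-\lambda$. This is the crucial algebraic identity; verifying it is a one-line check using $y-x = L-\delta$ and $\delta = \lambda L$.

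Next I would apply concavity of $\phi$ to each of these two convex combinations, obtaining
\begin{equation*}
    \phi(y) \;\ge\; \lambda\, \phi(x) + (1-\lambda)\,\phi(y+\delta), \qquad \phi(x+\delta) \;\ge\; (1-\lambda)\, \phi(x) + \lambda\,\phi(y+\delta).
\end{equation*}
Adding the two inequalities the weights telescope (since $\lambda + (1-\lambda) = 1$ on each side), yielding
\begin{equation*}
    \phi(y) + \phi(x+\delta) \;\ge\; \phi(x) + \phi(y+\delta),
\end{equation*}
which is exactly the claim.

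There is no real obstacle here; the only mild subtlety is to verify that the identities above hold in both orderings of the four points (i.e., whether $y \le x+\delta$ or $y > x+\delta$), but the derivation of $\lambda = \delta/L$ and its complement $1-\lambda = (y-x)/L$ uses only $x < y$ and $\delta>0$ and is therefore order-free. No differentiability of $\phi$ is needed, so the result applies to the log-density $\phi = \log f_0$ appearing in the proof of Lemma \ref{lemma:MLR-log-concavity} under the sole assumption of concavity.
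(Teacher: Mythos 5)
Your proof is correct and is essentially identical to the paper's: the paper also sets $\lambda = \delta/(y-x+\delta)$, writes $y$ and $x+\delta$ as the two complementary convex combinations of $x$ and $y+\delta$, applies concavity to each, and adds. No meaningful differences in approach or presentation.
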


\begin{proof}[Proof of Lemma \ref{lemma:four-point-concavity}]
Pick $\lambda = \delta/(y-x+\delta)$, by concavity of $f$ we have
\begin{equation} \label{eq:four-point-concavity-1}
    \lambda \phi(x) + (1-\lambda) \phi(y+\delta) 
    \le \phi(\lambda x + (1-\lambda)(y+\delta)) 
    = \phi(y),
\end{equation}
and
\begin{equation} \label{eq:four-point-concavity-2}
    (1-\lambda) \phi(x) + \lambda \phi(y+\delta)
    \le \phi((1-\lambda) x + \lambda(y+\delta)) 
    = \phi(x+\delta).
\end{equation}
Summing up \eqref{eq:four-point-concavity-1} and \eqref{eq:four-point-concavity-2} and we arrive at the conclusion as desired.
\end{proof}

\begin{proof}[Proof of Lemma \ref{lemma:likelihood-ratio-thresholding}]
The proof is entirely analogous to that of Lemma \ref{lemma:optimal-oracle-procedures}.
Since we know from \eqref{eq:uniform} that $\pi(S)$ is uniform, the problem reduces to showing that $f(x|\widehat{S}_{\text{opt}}) = f(x|\widehat{S})$, where 
$$
\widehat{S} \in \argmax_{S \in \mathcal{S}} f(x|S) \pi(S).
$$
and
$f(x|S)$ is the conditional distribution of data given the unordered support $S$,
\begin{equation} \label{eq:likelihood-ratio-thresholding-proof}
    f(x|S) = \sum_P f(x|P) \pi^{\text{ord}}(P|S) = \prod_{j\in S} f_a(x(j)) \prod_{j\not\in S}{f_0(x(j))}.
\end{equation}
Suppose $\widehat{S} \neq \widehat{S}_{\text{opt}}$, then there must be indices $j \in \widehat{S}$ and $j' \in \widehat{S}^c$ such that $L(j) \le L(j')$.
If we exchange the labels of $L(j)$ and $L(j')$, that is, we form a new estimate $\widehat{S}\,' = \big(\widehat{S}\setminus\{j\}\big)\cup\{j'\}$,
comparing the log-likelihoods under $\widehat{S}$ and $\widehat{S}\,'$, we have
\begin{equation*}
    \log{f(x|\widehat{S})} - \log{f(x|\widehat{S}\,')} 
    = \log{f_a(x(j))} + \log{f_0(x(j'))} - \log{f_a(x(j'))} - \log{f_0(x(j))}.
\end{equation*}
By the definition of $L(j)$'s, and the order relations, we obtain
\begin{equation*}
    \log{f(x|\widehat{S})} - \log{f(x|\widehat{S}\,')} 
    = \log{L(j)} - \log{L(j')} \le 0
\end{equation*}
This implies that any estimator that is not $\widehat{S}_{\text{opt}}$ may be improved, and the optimality follows.
\end{proof}

\begin{remark}
In the non-log-concave setting, where we know that thresholding procedures are suboptimal, likelihood thresholding procedures are promising, thanks to Lemma \ref{lemma:likelihood-ratio-thresholding}. 
However, in the case where signals have difference sizes, likelihood thresholding procedures are undefined;
in such settings, existence of an optimal procedure is an open problem.

Indeed, in the proof of Lemma \ref{lemma:likelihood-ratio-thresholding}, identical the signal densities are needed so that the Relation \eqref{eq:likelihood-ratio-thresholding-proof} holds.
\end{remark}

\section{Properties of asymptotic generalized Gaussian models and auxiliary facts}
\label{sec:AGG}
We collect some facts about AGG models in this section.
In particular, we give approximate upper quantiles of the AGG tails (which extends naturally to lower quantiles), and show that iid AGG sequences have uniform relatively stable (URS) maxima (which extends naturally to URS minima).

% \begin{proposition}[$(1 - 1/p)$-th quantile] \label{prop:quantile}
% Let 
% \begin{equation} \label{eq:quantiles-Appendix}
% u_p = F^{\leftarrow}(1-1/p),    
% \end{equation}
% then,
% \begin{equation}
% \lim_{p\to\infty} \frac{u_p}{\left(\nu\log{p}\right)^{1/\nu}} = 1.
% \end{equation}
% \end{proposition}
% 
\begin{proof}[Proof of Proposition \ref{prop:quantile}]
By definition of AGG, for any $\epsilon>0$, there is a constant $C(\epsilon)$ such that for all $x\ge C$, we have
$$
-\frac{1}{\nu}x^\nu(1+\epsilon) \le \log{\overline{F}(x)} \le -\frac{1}{\nu}x^\nu(1-\epsilon).
$$
Therefore, for all $x < x_l := \left((1+\epsilon)^{-1}\nu\log{p}\right)^{1/\nu}$, we have
\begin{equation} \label{eq:AGG-quantiles-proof-1}
    -\log{p} = -\frac{1}{\nu}x_l^\nu(1+\epsilon) \le \log{\overline{F}(x_l)} \le \log{\overline{F}(x)},
\end{equation}
and for all $x > x_u := \left((1-\epsilon)^{-1}\nu\log{p}\right)^{1/\nu}$, we have
\begin{equation} \label{eq:AGG-quantiles-proof-2}
    \log{\overline{F}(x)} \le \log{\overline{F}(x_u)} \le -\frac{1}{\nu}x_u^\nu(1-\epsilon) = -\log{p}.
\end{equation}
By definition of generalized inverse,
\begin{equation*}
    u_p := F^\leftarrow(1-1/p) = \inf\{x:\overline{F}(x)\le 1/p\} = \inf\{x:\log{\overline{F}(x)} \le -\log{p}\}.
\end{equation*}
We know from relations \eqref{eq:AGG-quantiles-proof-1} and \eqref{eq:AGG-quantiles-proof-2} that 
$$
[x_u, +\infty) \subseteq \{x:\log{\overline{F}(x)} \le -\log{p}\} \subseteq [x_l, +\infty),
$$
and so $x_l\le u_p \le x_u$, and the expression for the quantiles follow.
\end{proof}

A final auxiliary result is used in Relation \eqref{eq:Sudakov-2} in the proof of Theorem \ref{thm:Gaussian-weak-dependence},
where it is applied to the Normal distributed random variables.
We give here a general statement which may be of independent interest.

\begin{lemma} \label{lemma:expectation-lower}
Let $(X_i)_{i=1}^p$ be $p$ i.i.d. random variables with distribution $F$ such that $\E[(X_i)_-]$ exists, i.e.,
$$
\E[\max\{-X_i, 0\}] < \infty.
$$
Let $M_p = \max_{i=1,\ldots,p}X_i$. Assume that $F$ has a density $f$, which is eventually decreasing. 
More precisely, we suppose there exists a 
$C_0$ such that $0<F(C_0)<1$, and $f(x_1) \ge f(x_2)$ whenever $C_0 < x_1 \le x_2$. 
Under these assumptions, we have,
$$
\liminf_{p\to\infty}\frac{\E M_p}{u_{p+1}} \ge 1,
$$
where $u_{p+1} = F^{\leftarrow}(1 - 1/(p+1))$.
\end{lemma}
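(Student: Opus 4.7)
The plan is to exploit the quantile transform. Let $Y_i := F(X_i)$; since $F$ has a density it is continuous, so the $Y_i$'s are iid Uniform$(0,1)$, the random variable $Y_{(p)} := \max_i Y_i$ satisfies $M_p = F^{\leftarrow}(Y_{(p)})$, and $\E Y_{(p)} = p/(p+1) = F(u_{p+1})$. The hypothesis that $f$ is non-increasing on $(C_0,\infty)$ makes $F$ concave there, hence $F^{\leftarrow}$ is convex on $(F(C_0),1)$. We may reduce to the case $F(x)<1$ for all finite $x$ (otherwise $M_p$ and $u_p$ both converge to the right endpoint of the support and the conclusion is immediate), so in particular $u_p\to\infty$.

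Next I would localise to the event $A_p := \{Y_{(p)} > F(C_0)\} = \{M_p > C_0\}$, whose probability $1 - F(C_0)^p$ tends to $1$. On $A_p$ the variable $Y_{(p)}$ takes values in the interval $(F(C_0),1)$ on which $F^{\leftarrow}$ is convex. Since $Y_{(p)}\le F(C_0)$ on $A_p^c$ while $Y_{(p)}>F(C_0)$ on $A_p$, a total-expectation decomposition forces $\E[Y_{(p)} \mid A_p] \ge \E Y_{(p)} = p/(p+1)$. Conditional Jensen's inequality together with the monotonicity of $F^{\leftarrow}$ then yields
\begin{equation*}
    \E[M_p \mid A_p] \;=\; \E[F^{\leftarrow}(Y_{(p)}) \mid A_p] \;\ge\; F^{\leftarrow}\bigl(\E[Y_{(p)} \mid A_p]\bigr) \;\ge\; F^{\leftarrow}\bigl(p/(p+1)\bigr) \;=\; u_{p+1}.
\end{equation*}

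To conclude I would split the unconditional expectation as
\begin{equation*}
    \E M_p \;=\; \E[M_p; A_p] + \E[M_p; A_p^c] \;\ge\; u_{p+1}\,\P(A_p) - \E[(M_p)_-],
\end{equation*}
using $\E[M_p; A_p^c]\ge -\E[(M_p)_-]$. Because $M_p\ge X_1$ we have $(M_p)_-\le (X_1)_-$, which is integrable by assumption; coupling the $X_i$'s on a common probability space so that $M_p$ is non-decreasing in $p$, we have $M_p\to\infty$ almost surely (using $F(x)<1$ for all finite $x$), and dominated convergence gives $\E[(M_p)_-]\to 0$. Dividing through by $u_{p+1}\to\infty$ and taking $\liminf$ yields $\liminf_p \E M_p / u_{p+1} \ge 1$. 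The delicate step is the Jensen bookkeeping: one must check that the conditional support of $Y_{(p)}$ given $A_p$ sits inside the convexity domain of $F^{\leftarrow}$ and that $\E[Y_{(p)}\mid A_p]$ dominates $p/(p+1)$ so that monotonicity of $F^{\leftarrow}$ extracts $u_{p+1}$ cleanly; everything else follows from the integrability of $(X_1)_-$ and the divergence $u_p\to\infty$.
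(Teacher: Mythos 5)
Your proof is correct and follows essentially the same route as the paper's: pass to the quantile (uniform) scale, condition on the event that the maximum quantile exceeds $F(C_0)$ so that $F^{\leftarrow}$ is convex on the conditional support, and apply Jensen together with $\E\bigl[Y_{(p)}\mid A_p\bigr]\ge \E Y_{(p)} = p/(p+1)$ to extract $u_{p+1}$. (The paper computes $\E[M^U_p\mid M^U_p\ge F(C_0)]$ explicitly and observes it exceeds $p/(p+1)$; your general decomposition argument is an equivalent shortcut. Also note that strictly one has $F^{\leftarrow}(Y_{(p)})\le M_p$ rather than equality if $F$ is not strictly increasing, but this is harmless since you only need the lower bound.)

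The one genuine difference is in how the complement event $A_p^c$ is handled. The paper uses the stochastic domination $\bigl(M_p^U\mid M_p^U<F(C_0)\bigr)\stackrel{d}{\ge}\bigl(U_1\mid U_1<F(C_0)\bigr)$, which bounds the conditional expectation below by the fixed constant $\E[X_1\mid X_1<C_0]$; the contribution then vanishes because $\P(A_p^c)=F(C_0)^p\to 0$, with no further limiting argument required. You instead bound $\E[M_p;A_p^c]\ge -\E[(M_p)_-]$ and invoke dominated convergence under the natural monotone coupling to show $\E[(M_p)_-]\to 0$. Both are valid, but the paper's bound is the cleaner of the two: it does not require a separate almost-sure limit and DCT step, and it sidesteps your preliminary reduction to the case $F(x)<1$ for all $x$ (which you need in order to guarantee $M_p\to\infty$ a.s., and which you dispatch only somewhat informally).
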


\begin{proof}[Proof of Lemma \ref{lemma:expectation-lower}]
% For ease of exposition we let $C_0 = 0$; the case where $C_0 > 0$ can be handled using exactly the same arguments.
Write 
$$
X_i = F^{\leftarrow}(U_i)
$$
where $U_i$ are i.i.d. uniform random variables on $(0,1)$.
Denote $M^{U}_p$ as the maximum of the $U_i$'s, we have $\E M_p = \E\left[F^{\leftarrow}(M^{U}_p)\right]$, and by conditioning, we obtain
\begin{align} \label{eq:lemma:expectation-lower-proof-1}
    \E M_p &= \E\left[F^{\leftarrow}(M^{U}_p)\;\big|\;M^{U}_p \ge F(C_0)\right] \P\left[M^{U}_p \ge F(C_0)\right] + \nonumber \\ 
           &\quad\quad +\E\left[F^{\leftarrow}(M^{U}_p)\;\big|\;M^{U}_p < F(C_0)\right] \P\left[M^{U}_p < F(C_0)\right]. 
\end{align} 
We first handle the first term in the summation. Since $f$ is decreasing beyond $C_0$, $F$ is concave on $(C_0, \infty)$, and $F^{\leftarrow}$ is convex on $(F(C_0), 1)$. By Jensen's inequality, we have
\begin{equation*}
    \E\left[F^{\leftarrow}(M^{U}_p)\;\big|\;M^{U}_p \ge F(C_0)\right] 
        \ge F^{\leftarrow}\left(\E[M^{U}_p\;|\;M^{U}_p\ge F(C_0)]\right).
\end{equation*}
With a direct calculation, one can show that 
\begin{equation*}
    F^{\leftarrow}\left(\E[M^{U}_p\;|\;M^{U}_p\ge F(C_0)]\right)
    = F^{\leftarrow}\left(\left(1-\frac{1}{p+1}\right)\left(\frac{1-F(C_0)^{p+1}}{1-F(C_0)^{p}}\right)\right),
\end{equation*}
and hence
\begin{align*}
    \E\left[F^{\leftarrow}(M^{U}_p)\;\big|\;M^{U}_p \ge F(C_0)\right] 
        &\ge F^{\leftarrow}\left(\left(1-\frac{1}{p+1}\right)\left(\frac{1-F(C_0)^{p+1}}{1-F(C_0)^{p}}\right)\right) \\
        &\ge F^{\leftarrow}\left(1-\frac{1}{p+1}\right) = u_{p+1}.
\end{align*}
Since $\P[M^{U}_p\le m\big|\;M^{U}_p < F(C_0)] = \left(m/F(C_0)\right)^p \le m/F(C_0)$ for $m\le F(C_0))$, we have
$$\left(M^{U}_p\;\big|\;M^{U}_p < F(C_0)\right)\stackrel{\text{d}}{\ge} \left(U_1\;\big|\;U_1 < F(C_0)\right),$$
where and the latter is the uniform distribution on $(0,F(C_0))$.
Therefore, for the second term of the sum in \eqref{eq:lemma:expectation-lower-proof-1}, by monotonicity of $F^{\leftarrow}$, we obtain
\begin{align*}
    \E\left[F^{\leftarrow}(M^{U}_p)\;\big|\;M^{U}_p < F(C_0)\right] 
        &\ge \E\left[F^{\leftarrow}(U_1)\;\big|\;U_1 < F(C_0)\right] \\
        &= \E\left[X_1\;\big|\;X_1 < C_0\right].
\end{align*}
Finally, since $\P\left[M^{U}_p < F(C_0)\right] = F(C_0)^p = 1 - \P\left[M^{U}_p \ge F(C_0)\right]$, by \eqref{eq:AGG-quantiles}, we have
\begin{align*}
    \frac{\E M_p}{u_{p+1}} 
        &\ge \left(1 - F(C_0)^p\right) + \frac{\E\left[X_1\;\big|\;X_1 < C_0\right]}{u_{p+1}} F(C_0)^p.
\end{align*}
The conclusion follows since the right-hand-side of the last inequality converges to 1.
\end{proof}

% \section{An erratum for Ji and Jin (2012)}
% \label{sec:errata}
% \input{errata-UPS}

\section{Strong classification boundaries in other light-tailed models}
\label{sec:other-boundaries}
The strong classification boundaries extend beyond the AGG models.
As our analysis in Section \ref{sec:URS} suggests, all error models with URS maxima demonstrate this phase transition phenomenon under appropriate parametrization of the sparsity and signal sizes.
We derive explicit boundaries for two additional classes of models under the general form of the additive noise models \eqref{eq:model}, with heavier and lighter tails than the AGG models, respectively. 

We would like to point out that the sparsity and signal sizes can be re-parametrized for the the boundaries to have different shapes.
For example in the case of Gaussian errors, if we re-parametrize sparsity $s$ with 
$\widetilde{\beta} = 2 - \left(1 + \sqrt{1-\beta}\right)^2$ where $\widetilde{\beta}\in(0,1)$, then the signal sparsity would have a slightly more complicated form:
$$
\left|S_p\right| = \left\lfloor p^{1-\beta} \right\rfloor = \left\lfloor p^{\left(\sqrt{2 - \widetilde{\beta}} - 1\right)^2}\right\rfloor,
$$
while the strong classification boundary would take on the simpler form:
\begin{equation}\label{eq:altenative-parametrization}
g(\beta) = \widetilde{g}(\widetilde{\beta}) = 2 - \widetilde{\beta}.
\end{equation}
In the next two classes of models we will adopt parametrizations such that the boundaries are of the form $\widetilde{g}$ in \eqref{eq:altenative-parametrization}.

\subsection{Additive noise models with heavier-than-AGG tails}
Following Example \ref{exmp:heavier-than-AGG}, assume that the errors in Model \eqref{eq:model} have rapidly varying right tails
\begin{equation} \label{eq:heavier-than-AGG-boundary-1}
    \log{\overline{F}(x)} = - \left(\log x\right)^\gamma \left(c+M(x)\right),
\end{equation}
as $x\to \infty$, and left tails
\begin{equation} \label{eq:heavier-than-AGG-boundary-2}
    \log{{F}(x)} = - \left(\log{(-x)}\right)^\gamma \left(c+M(-x)\right),
\end{equation}
as $x\to -\infty$.

\begin{theorem} \label{thm:heavier-than-AGG}
Suppose the marginals $F$ follows \eqref{eq:heavier-than-AGG-boundary-1} and \eqref{eq:heavier-than-AGG-boundary-2}.
Let
$$
k(\beta) = \log{p} - \left((\log{p})^{1/\gamma} + \log{(1-\beta)}\right)^\gamma,
$$
and let the signal $\mu$ have 
$$|S_p| = \left\lfloor pe^{-k(\beta)} \right\rfloor$$
non-zero entries. Assume the magnitudes of non-zero signal entries are in the range between
$$\underline{\Delta} = \exp{\left\{(\log{p})^{1/\gamma}\right\}}\underline{r}
\quad\text{and}\quad
\overline{\Delta} = \exp{\left\{(\log{p})^{1/\gamma}\right\}}\overline{r}.$$
If $\underline{r} > \widetilde{g}(\beta) = 2 - \beta$, then Bonferroni's procedure $\widehat{S}_p$ (defined in \eqref{eq:Bonferroni-procedure}) with appropriately calibrated FWER $\alpha\to 0$ achieves asymptotic perfect support recovery, under arbitrary dependence of the errors.

On the other hand, when the errors are uniformly relatively stable, if $\overline{r} < \widetilde{g}(\beta) = 2 - \beta$, then no thresholding procedure can achieve asymptotic perfect support recovery with positive probability.
\end{theorem}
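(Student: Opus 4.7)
The plan is to mirror the proofs of Theorems \ref{thm:sufficient} and \ref{thm:necessary} and reduce everything to the quantile asymptotics of the heavier-than-AGG model derived in Example \ref{exmp:heavier-than-AGG}. The entire heavy lifting is a clean computation of the ratio $u_{|S_p|}/u_p$ under the prescribed parametrization, after which the boundary $\widetilde{g}(\beta) = 2-\beta$ appears as $1 + (1-\beta)$, just as in the AGG case the boundary $g(\beta)$ decomposes as $1 + (1-\beta)^{1/\nu}$ (to the $\nu$-th power).

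First I would establish the quantile asymptotics. From \eqref{eq:heavier-than-AGG-boundary-1} and the hypothesis $M(x)\log x \to 0$, solving $-\log \overline F(u_p) = \log p$ yields $\log u_p \sim (\log p)^{1/\gamma}$, i.e., $u_p \sim \exp\{(\log p)^{1/\gamma}\}$ (taking $c=1$; a general $c$ only rescales the parametrization). Plugging $\log|S_p| = \log p - k(\beta) = \bigl((\log p)^{1/\gamma} + \log(1-\beta)\bigr)^\gamma$ into this relation gives
\begin{equation*}
u_{|S_p|} \sim \exp\bigl\{(\log|S_p|)^{1/\gamma}\bigr\} = \exp\bigl\{(\log p)^{1/\gamma} + \log(1-\beta)\bigr\} \sim (1-\beta)\, u_p,
\end{equation*}
and analogously $u_{|S_p^c|}/u_p \to 1$ since $|S_p^c|/p \to 1$. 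By the parametrization, $\underline\Delta/u_p \to \underline r$ and $\overline\Delta/u_p \to \overline r$.

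For the sufficient direction I would follow the proof of Theorem \ref{thm:sufficient} verbatim. Choose $\alpha(p)\to0$ with $\alpha p^\delta\to\infty$ for every $\delta>0$, so that the Bonferroni threshold satisfies $t_p/u_p \to 1$. FWER control comes from the union bound exactly as in \eqref{eq:Bonferroni-FWER-control}. For the no-missed-detection bound,
\begin{equation*}
\P[\widehat S \supseteq S] \ge \P\!\left[\frac{\max_{j\in S}(-\epsilon(j))}{u_{|S_p|}} < \frac{\underline\Delta/u_p - t_p/u_p}{u_{|S_p|}/u_p}\right],
\end{equation*}
and the right-hand side of the inequality inside the probability tends to $(\underline r - 1)/(1-\beta)$, which exceeds $1$ iff $\underline r > 2-\beta$. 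Because $F$ is rapidly varying (Example \ref{exmp:heavier-than-AGG}) and $-\epsilon$ has the same rapid-variation property by \eqref{eq:heavier-than-AGG-boundary-2}, Proposition \ref{prop:rapid-varying-tails} gives convergence of this probability to $1$, under arbitrary dependence.

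For the necessary direction I would copy the proof of Theorem \ref{thm:necessary}. Any thresholding estimator satisfies
\begin{equation*}
\P[\widehat S_p = S_p] \le \P\!\left[\frac{M_{S_p^c}}{u_p} < \frac{\overline\Delta - m_{S_p}}{u_p}\right],
\end{equation*}
with $M_{S_p^c} = \max_{j\in S_p^c}\epsilon(j)$ and $m_{S_p} = \max_{j\in S_p}(-\epsilon(j))$. By the URS assumption on $\mathcal E$ and $-\mathcal E$ combined with the quantile ratios from Step 1, the left-hand side converges in probability to $1$, while the right-hand side converges in probability to $\overline r - (1-\beta) < 1$ whenever $\overline r < 2-\beta$; the probability therefore vanishes.

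The main obstacle will be Step 1, the quantile asymptotics. Because $u_p$ is now doubly exponential in $(\log p)^{1/\gamma}$, a multiplicative $(1+o(1))$ inside $\log u_p$ can propagate to a nontrivial multiplicative factor in $u_p$ itself, so the error terms coming from $M(u_p)\log u_p = o(1)$ must be tracked carefully to confirm that the ratio $u_{|S_p|}/u_p \to 1-\beta$ genuinely holds and is not merely of the right order. Once this computation is secured, the rest of the proof is essentially bookkeeping along the two AGG templates.
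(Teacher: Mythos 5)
The paper does not actually supply a proof of Theorem~\ref{thm:heavier-than-AGG}, so your proposal fills a genuine gap, and the overall strategy (reuse the two templates from Theorems~\ref{thm:sufficient} and~\ref{thm:necessary} after reworking the quantile asymptotics) is the right one. The converse direction and the key quantile ratio computation $u_{|S_p|}/u_p \to 1-\beta$ are correct: because $\gamma>1$ and $M(u_p)\log u_p\to 0$, the relation $\log u_q = (\log q)^{1/\gamma} + o(1)$ holds with a genuine additive $o(1)$, so the ratio converges to $1-\beta$ on the nose. The concern you flag at the end is therefore not actually the weak point of Step~1.

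The weak point is elsewhere, in the sufficiency direction: your claim that choosing $\alpha$ with $\alpha p^\delta\to\infty$ for all $\delta>0$ forces $t_p/u_p\to 1$ is \emph{false} in this model. That condition is equivalent to $\log(1/\alpha)=o(\log p)$, which is what Theorem~\ref{thm:sufficient} needs for $\mathrm{AGG}(\nu)$, where the quantile is a polynomial in $\log p$ and the threshold is multiplicatively perturbed by a factor $\left(1+\log(1/\alpha)/\log p\right)^{1/\nu}\to 1$. Here the quantile is $u_p=\exp\{(\log p)^{1/\gamma}\}(1+o(1))$, and the same algebra gives
\[
\log t_p - \log u_p \sim \frac{1}{\gamma}\cdot\frac{\log(1/\alpha)}{(\log p)^{1-1/\gamma}},
\]
which does \emph{not} vanish under $\log(1/\alpha)=o(\log p)$. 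For example, with $\gamma=2$ and $\alpha=\exp\{-(\log p)^{3/4}\}$ one has $\alpha p^\delta\to\infty$ for every $\delta>0$, yet $\log t_p-\log u_p\sim \tfrac12(\log p)^{1/4}\to\infty$, so $t_p/u_p\to\infty$ and the no-missed-detection bound breaks for every $\underline r$. The correct model-specific calibration is $\log(1/\alpha)=o\bigl((\log p)^{1-1/\gamma}\bigr)$; any polylogarithmic rate such as $\alpha=1/\log p$ suffices, and under that choice $t_p/u_p\to 1$ and the rest of your sufficiency argument goes through. So the ``appropriately calibrated'' $\alpha$ in the theorem statement cannot simply be imported from the AGG case — it must be recalibrated for each tail family.

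One smaller remark: you set $c=1$ and assert that a general $c$ ``only rescales.'' That is true, but note the rescaling must be applied to the signal parametrization $\underline\Delta,\overline\Delta$ (replacing $(\log p)^{1/\gamma}$ by $(c^{-1}\log p)^{1/\gamma}$) and to the definition of $k(\beta)$, otherwise $\underline\Delta/u_p$ does not converge to $\underline r$. Worth stating explicitly if you write this up.
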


\subsection{Additive noise models with lighter-than-AGG tails}
Following Example \ref{exmp:lighter-than-AGG}, assume that errors in Model \eqref{eq:model} has rapidly varying right tails
\begin{equation} \label{eq:lighter-than-AGG-boundary-1}
        \log{\overline{F}(x)} = - \exp{\left\{x^\nu L(x)\right\}},
\end{equation}
where $L(x)$ is a slowly varying function, as $x\to\infty$, and left tails
\begin{equation} \label{eq:lighter-than-AGG-boundary-2}
        \log{\overline{F}(x)} = - \exp{\left\{x^\nu L(x)\right\}},
\end{equation}
as $x\to -\infty$.

\begin{theorem} \label{thm:lighter-than-AGG}
Suppose marginals $F$ follow \eqref{eq:lighter-than-AGG-boundary-1} and \eqref{eq:lighter-than-AGG-boundary-2}.
Let
$$
k(\beta) = \log{p} - \left(\log(p)\right)^{(1-\beta)^\nu},
$$
and let the signal $\mu$ have 
$$|S_p| = \left\lfloor pe^{-k(\beta)} \right\rfloor$$
non-zero entries. Assume the magnitudes of non-zero signal entries are in the range between
$$\underline{\Delta} = \log{\log{p}}^{1/\nu}\underline{r}
\quad\text{and}\quad
\overline{\Delta} = \log{\log{p}}^{1/\nu}\overline{r}.$$
If $\underline{r} > \widetilde{g}(\beta) = 2 - \beta$, then Bonferroni's procedure $\widehat{S}_p$ (defined in \eqref{eq:Bonferroni-procedure}) with appropriately calibrated FWER $\alpha\to 0$ achieves asymptotic perfect support recovery, under arbitrary dependence of the errors.

On the other hand, when the errors are uniformly relatively stable, if $\overline{r} < \widetilde{g}(\beta) = 2 - \beta$, then no thresholding procedure can achieve asymptotic perfect support recovery with positive probability.
\end{theorem}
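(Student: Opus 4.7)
The plan is to mirror the proofs of Theorems \ref{thm:sufficient} and \ref{thm:necessary}, substituting in the quantile asymptotics appropriate for the tail model \eqref{eq:lighter-than-AGG-boundary-1}--\eqref{eq:lighter-than-AGG-boundary-2}. The central preparatory computation is to invert $\log \overline F(u_p) = -\log p$, which gives $u_p^\nu L(u_p) \sim \log\log p$ and hence $u_p \sim (\log\log p)^{1/\nu}$ (cleanly under $L(x)\to 1$, as in Example \ref{exmp:lighter-than-AGG}). With the prescribed sparsity $|S_p| = \lfloor p e^{-k(\beta)}\rfloor \sim \exp\{(\log p)^{(1-\beta)^\nu}\}$ one obtains $\log\log |S_p| = (1-\beta)^\nu \log\log p$, and therefore
\begin{equation*}
  \frac{u_{|S_p|}}{u_p} \longrightarrow (1-\beta),\qquad \text{as } p\to\infty.
\end{equation*}
Notice that this ratio is \emph{linear} in $(1-\beta)$, rather than $(1-\beta)^{1/\nu}$ as in the AGG case; this is precisely what produces the boundary $\widetilde g(\beta) = 2-\beta$ in place of the previous $(1+(1-\beta)^{1/\nu})^\nu$.

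For the sufficient direction, I would pick $\alpha = \alpha(p) \to 0$ with $\alpha p^\delta \to \infty$ for every $\delta > 0$ (e.g., $\alpha = 1/\log p$), so that the Bonferroni threshold satisfies $t_p \sim u_p \sim (\log\log p)^{1/\nu}$. The union bound as in \eqref{eq:Bonferroni-FWER-control} yields $\P[\widehat S_p \subseteq S_p] \to 1$ under arbitrary dependence. For the no-missed-detection event one rewrites
\begin{equation*}
  \P[\widehat S_p \supseteq S_p] \;\ge\; \P\Big[\frac{\max_{j \in S_p}(-\epsilon_p(j))}{u_{|S_p|}} < \frac{\underline r - 1}{1-\beta}(1+o(1))\Big],
\end{equation*}
and invokes Proposition \ref{prop:rapid-varying-tails} applied to the negated errors (which share the same rapidly varying tail shape). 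The right-hand side tends to $1$ iff $(\underline r - 1)/(1-\beta) > 1$, i.e., $\underline r > 2-\beta$, exactly as required.

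For the necessary direction under URS I replicate the proof of Theorem \ref{thm:necessary}: the event $\{\widehat S_p = S_p\}$ is contained in $\{M_{S_p^c} < \overline\Delta - m_{S_p}\}$, where $M_{S_p^c} = \max_{j\in S_p^c}\epsilon_p(j)$ and $m_{S_p} = \max_{j\in S_p}(-\epsilon_p(j))$. Dividing through by $u_p$, the URS hypotheses on $\mathcal E$ and $-\mathcal E$, combined with $u_{|S_p^c|}/u_p \to 1$ and $u_{|S_p|}/u_p \to (1-\beta)$, give $M_{S_p^c}/u_p \xrightarrow{\P} 1$ and $m_{S_p}/u_p \xrightarrow{\P} (1-\beta)$, while $\overline\Delta/u_p \to \overline r$. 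Hence $(\overline\Delta - m_{S_p})/u_p \xrightarrow{\P} \overline r - (1-\beta)$, which is strictly less than $1$ precisely when $\overline r < 2-\beta$, and the dominating probability vanishes. The only non-trivial subtlety --- and the place where care is needed if one wishes to allow genuinely non-constant slowly varying $L$ --- is verifying that $L(u_{|S_p|})/L(u_p) \to 1$, which is automatic because $u_{|S_p|}/u_p$ tends to a finite positive constant and $L$ is slowly varying. Under the cleaner assumption $L(x)\to 1$ used in Example \ref{exmp:lighter-than-AGG}, the signal parametrization $\underline\Delta = (\log\log p)^{1/\nu}\underline r$ coincides with $u_p\underline r(1+o(1))$ and no further care is needed; the probabilistic machinery is then imported wholesale from Sections \ref{sec:sufficient}--\ref{sec:URS}.
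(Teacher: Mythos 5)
Your proposal is correct and follows what is clearly the intended proof strategy: the paper states Theorem \ref{thm:lighter-than-AGG} in Appendix \ref{sec:other-boundaries} without proof, and the argument is a direct transposition of Theorems \ref{thm:sufficient} and \ref{thm:necessary} with the quantile scale $u_p\sim(\log\log p)^{1/\nu}$ substituted for $(\nu\log p)^{1/\nu}$. Your central computation $\log\log|S_p|\sim(1-\beta)^\nu\log\log p$, hence $u_{|S_p|}/u_p\to 1-\beta$, is exactly what turns the boundary into the linear function $2-\beta$; the FWER-control step, the reduction of the no-missed-detection event to a bound via Proposition \ref{prop:rapid-varying-tails} applied to $-\epsilon$, and the URS-based converse $(\overline\Delta - m_S)/u_p\xrightarrow{\P}\overline r - (1-\beta)<1$ all go through verbatim. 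Your observation that the $\delta$-chase in the sufficient direction becomes trivial here (because $\log\log(p/\alpha)\sim\log\log p$ for any $\alpha$ with $\alpha p^\delta\to\infty$) is a genuine simplification relative to the AGG proof, and your caveat about the implicit $L(x)\to 1$ normalization tying the stated signal scale $(\log\log p)^{1/\nu}$ to the actual quantile $u_p$ is apt and consistent with the restriction already noted in Example \ref{exmp:lighter-than-AGG}.
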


\section{Thresholding procedures under heavy-tailed errors}
\label{sec:heavy-tailed}
We analyze the performance of thresholding estimators under heavy-tailed models in this section, and illustrate its lack of phase transition.
Suppose we have iid errors with Pareto tails in Model \eqref{eq:model}, that is, $\epsilon(j)$'s have common marginal distribution $F$ where
\begin{equation} \label{eq:pareto-tails}
    \overline{F}(x) \sim x^{-\alpha} \quad \text{and} \quad F(-x) \sim x^{-\alpha},    
\end{equation}
as $x\to\infty$. 
It is well-known (see, e.g., Theorem 1.6.2 of \citep{leadbetter2012extremes}) that the maxima of iid Pareto random variables have Frechet-type limits.
Specifically, we have
\begin{equation} \label{eq:Frechet-limit-1}
    \frac{\max_{j\in[p]}\epsilon(j)}{u_p} \implies Y,
\end{equation}
in distribution, where $u_p = F^{\leftarrow}(1-1/p)\sim p^{1/\alpha}$, and $Y$ is an $\alpha$-Frechet random variable. By symmetry in our assumptions, the same argument applies to the minima as well.

\begin{theorem} \label{thm:heavy-tails}
Let errors in Model \eqref{eq:model} be as described in Relation \eqref{eq:pareto-tails}.
Let the signal have $s = |S| = fp$ non-zero entries, with magnitude $\Delta = rp^{1/\alpha}$, where both $f$ and $r$ may depend on $p$, so that no generality is lost.

Under these assumptions, the necessary and sufficient condition for a thresholding procedures $\widehat{S}$ to achieve perfect support recovery ($\P[\widehat{S}=S]\to1$) is 
$$
\liminf_{p\to\infty} r\to\infty.
$$
Conversely, the necessary and sufficient condition  for a thresholding procedures to fail perfect support recovery ($\P[\widehat{S}=S]\to0$) is 
$$
\limsup_{p\to\infty} r\to 0.
$$
That is, there does not exist a non-trivial phase transition for thresholding procedures when errors have (two-sided) $\alpha$-Pareto tails.
\end{theorem}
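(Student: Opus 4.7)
The strategy is to bracket $\P[\widehat{S}_p=S]$ between two explicit quantities that depend smoothly on $r$, making the absence of a sharp threshold transparent. Let $M := \max_{j \in S^c}\epsilon(j)$ and $M' := \max_{j\in S}(-\epsilon(j))$; since the $\epsilon(j)$ are iid, $M$ and $M'$ are independent. For an arbitrary thresholding procedure $\widehat{S}_p = \{j:x(j)>t_p(x)\}$, correct classification forces $M \le t_p(x) < \Delta - M'$, yielding the universal upper bound
\[
\P[\widehat{S}_p=S]\ \le\ \P[M+M'<\Delta].
\]
For the concrete midpoint procedure $\widehat{S}_p^* := \{j:x(j)>\Delta/2\}$, the same reasoning gives the exact identity $\{\widehat{S}_p^*=S\} = \{M<\Delta/2\}\cap\{M'<\Delta/2\}$, so by independence
\[
\P[\widehat{S}_p^*=S]\ =\ F(\Delta/2)^{|S^c|}\,\overline{F}(-\Delta/2)^{|S|}.
\]

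Invoking the Pareto assumptions $\overline{F}(t)\sim t^{-\alpha}$, $F(-t)\sim t^{-\alpha}$ with $\Delta = rp^{1/\alpha}$ and $|S|+|S^c|=p$, the product above simplifies to $\P[\widehat{S}_p^*=S]\sim\exp(-(r/2)^{-\alpha})$. This immediately yields two of the four implications: when $r \to \infty$, $\P[\widehat{S}_p^*=S]\to 1$, so some thresholding procedure achieves $\P\to 1$; conversely, if $\limsup r = c > 0$, along a subsequence the midpoint procedure satisfies $\P[\widehat{S}_p^*=S]\to \exp(-(c/2)^{-\alpha}) > 0$, so not every thresholding procedure has $\P\to 0$.

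For the other two implications I use the universal upper bound together with a Fr\'echet-limit analysis of $(M+M')/p^{1/\alpha}$. Writing $f := |S|/p$ and passing (by compactness of $[0,1]$) to any subsequence along which $f \to c \in [0,1]$, the Fisher--Tippett--Gnedenko theorem gives $M/p^{1/\alpha}\Rightarrow (1-c)^{1/\alpha}Y_1$ and $M'/p^{1/\alpha}\Rightarrow c^{1/\alpha}Y_2$ with independent $\alpha$-Fr\'echet variables $Y_1, Y_2$ (with the convention that the component degenerates to $0$ when the corresponding index set is bounded). Since Fr\'echet has no atom at $0$ and $c\in[0,1]$ forces at least one coefficient to be non-zero, the weak limit $Z := (1-c)^{1/\alpha}Y_1+c^{1/\alpha}Y_2$ is strictly positive almost surely, which implies
\[
\P[\widehat{S}_p=S]\ \le\ \P[(M+M')/p^{1/\alpha}<r]
\]
tends to $\P[Z<c]<1$ whenever $r\to c<\infty$ (ruling out $\P\to 1$ when $\liminf r<\infty$) and to $\P[Z\le 0]=0$ when $r\to 0$ (forcing all thresholding procedures to fail in that regime). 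The main obstacle is verifying the Fr\'echet limit and strict positivity of $Z$ uniformly across edge regimes of $f$ (e.g.\ when $|S|$ or $|S^c|$ stays bounded and one component degenerates); this is handled by observing that $|S|+|S^c|=p\to\infty$ guarantees at least one of the two index sets diverges, supplying a non-degenerate Fr\'echet component. Conceptually, no phase transition arises because the heavy Pareto tails place the noise maxima on the same scale $p^{1/\alpha}$ as the signal $\Delta = rp^{1/\alpha}$, so $\P[\widehat{S}_p=S]$ interpolates smoothly through the Fr\'echet scale as $r$ varies, with no critical value at which the behavior jumps.
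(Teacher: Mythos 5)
Your proof is correct and lands on the same core quantities as the paper, but it takes a genuinely different route on the achievability side, which is worth spelling out. Both you and the paper begin from the same universal upper bound $\P[\widehat S_p=S]\le \P[M+M'<\Delta]$: the paper obtains it by dominating every thresholding estimator by the oracle top--$s$ rule $\widehat S^*=\{j:x(j)\ge x_{[s]}\}$, whose success probability is exactly $\P[\max_{S^c}x\le\min_S x]=\P[M+M'<\Delta]$, while you derive it directly from $M\le t_p(x)<\Delta-M'$; these are the same bound. Where you diverge is on the lower bound: the paper stays with the oracle and controls $\P[M+M'<\Delta]$ by sandwiching $(1-f)^{1/\alpha}A+f^{1/\alpha}B$ between $L\min\{A,B\}$ and $U\max\{A,B\}$ with the deterministic constants $L,U$, reducing to separate Fr\'echet limits for the min and the max; you instead use the fixed midpoint threshold $\Delta/2$, whose probability factors exactly as $F(\Delta/2)^{|S^c|}\overline F(-\Delta/2)^{|S|}\sim\exp(-(r/2)^{-\alpha})$ once $\Delta\to\infty$. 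Your route buys an explicit, closed-form Poisson-type expression and avoids the $L,U$ bookkeeping; the paper's $L,U$ sandwich is looser but has the minor advantage of not requiring $f$ to converge. Both approaches still need the subsequence device (pass to subsequences along which $r$, and in your case $f$, converge) and both must dispatch the degenerate regimes where $|S|$ or $|S^c|$ stays bounded; your observation that $\max\{|S|,|S^c|\}\ge p/2\to\infty$ always supplies a nondegenerate Fr\'echet component, so $Z>0$ a.s., handles this correctly. One small technical note: when you invoke $\overline F(\Delta/2)\sim(\Delta/2)^{-\alpha}$ you implicitly need $\Delta\to\infty$; this is fine because you only apply the midpoint formula in regimes where $r$ is bounded away from $0$ along the relevant subsequence, but it deserves an explicit sentence.
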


\begin{proof}[Proof of Theorem \ref{thm:heavy-tails}]
Recall the oracle thresholding procedure $\widehat{S}^* = \left\{j:x(j) \ge x_{[s]}\right\}$.
The probability of exact support recovery by any thresholding procedure $\widehat{S}$ is bounded above by that of $\widehat{S}^*$, that is,
\begin{align}
    \P[\widehat{S}=S] &\le \P[\widehat{S}^*=S] 
        = \P\Big[\max_{j\in S^c}x(j) \le \min_{j\in S}x(j)\Big] \nonumber \\
        &= \P\Big[\frac{\max_{j\in S^c}x(j)}{u_p} \le \frac{\min_{j\in S}x(j)}{u_p}\Big] \nonumber \\
        &= \P\Big[\frac{M_{S^c}}{u_p} \le \frac{m_S}{u_p} + r\Big], \label{eq:heavy-tailed-case-proof-0}
        % &= \P\Big[\underbrace{(1-f)^{1/\alpha}Y^{(1)}}_{=:F_1} + \underbrace{f^{1/\alpha}Y^{(2)}}_{=:F_2} \le r\Big], 
\end{align}
where $M_{S^c} = \max_{j\in S^c}\epsilon(j)$ and $m_S = \min_{j\in S}\epsilon(j)$.
For any $\alpha > 0$, the following elementary relations hold,
\begin{equation*}
    0 < L \le (1-f)^{1/\alpha} + f^{1/\alpha} \le U < \infty, \quad \text{for all} \; f\in(0,1),
\end{equation*}
where $L = \min\left\{1, 2(1/2)^{1/\alpha}\right\}$ and $U = \max\left\{1, 2(1/2)^{1/\alpha}\right\}$.
Therefore we have,
\begin{equation} \label{eq:heavy-tailed-case-proof-1}
    U\max\left\{\frac{M_{S^c}}{u_p}, -\frac{m_S}{u_p}\right\} < r
    \implies
    (1-f)^{1/\alpha}\frac{M_{S^c}}{u_p} - f^{1/\alpha}\frac{m_S}{u_p} < r,
\end{equation}
and 
\begin{equation} \label{eq:heavy-tailed-case-proof-2}
    L\min\left\{\frac{M_{S^c}}{u_p}, -\frac{m_S}{u_p}\right\} < r
    \impliedby
    (1-f)^{1/\alpha}\frac{M_{S^c}}{u_p} - f^{1/\alpha}\frac{m_S}{u_p} < r.
\end{equation}
Putting together \eqref{eq:heavy-tailed-case-proof-0}, \eqref{eq:heavy-tailed-case-proof-1}, and \eqref{eq:heavy-tailed-case-proof-2}, we have
\begin{equation} \label{eq:heavy-tailed-case-proof-3}
    \P\Big[\max\left\{\frac{M_{S^c}}{u_p}, -\frac{m_S}{u_p}\right\} < r/U\Big]
    \le \P[\widehat{S}^*=S]
    \le \P\Big[\min\left\{\frac{M_{S^c}}{u_p}, -\frac{m_S}{u_p}\right\} < r/L\Big].
\end{equation}
But we know from \eqref{eq:Frechet-limit-1} that 
\begin{equation} \label{eq:heavy-tailed-case-proof-4}
    \P\Big[\max\left\{Y^{(1)}, Y^{(2)}\right\} < \frac{\liminf r}{U}\Big]
    \le \limsup \P\Big[\max\left\{\frac{M_{S^c}}{u_p}, -\frac{m_S}{u_p}\right\} < r/U\Big],
\end{equation}
and
\begin{equation} \label{eq:heavy-tailed-case-proof-5}
    \liminf \P\Big[\min\left\{\frac{M_{S^c}}{u_p}, -\frac{m_S}{u_p}\right\} < r/U\Big]
    \le \P\Big[\min\left\{Y^{(1)}, Y^{(2)}\right\} < \frac{\limsup r}{L}\Big].
\end{equation}
where $Y^{(1)}$ and $Y^{(2)}$ are independent $\alpha$-Frechet random variables.
The conclusions of Theorem \ref{thm:heavy-tails} follow immediately from \eqref{eq:heavy-tailed-case-proof-3}, \eqref{eq:heavy-tailed-case-proof-4} and \eqref{eq:heavy-tailed-case-proof-5}.
\end{proof}

The probability of exact recovery can be approximated if the parameters $r$ and $f$ converge.
The following lemma follows with a small modification of the arguments in the proof of Theorem \ref{thm:heavy-tails}.

\begin{corollary}
Under the assumptions in Theorem \ref{thm:heavy-tails}, if 
$\lim r = r^*$, and $\lim f = f^*$, for some constant $r^*\ge0$ and $f^*\in[0,1]$, then 
$$
\lim \P[\widehat{S}^*=S] = \P\Big[(1-f^*)^{1/\alpha}Y^{(1)} + (f^*)^{1/\alpha}Y^{(2)} < {r^*}\Big].
$$
where $Y^{(1)}$ and $Y^{(2)}$ are independent $\alpha$-Frechet random variables.
\end{corollary}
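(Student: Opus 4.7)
The plan is to upgrade the bounding argument used in the proof of Theorem \ref{thm:heavy-tails} to an exact asymptotic identity, by replacing the step where the constants $L,U$ were introduced with a direct application of joint weak convergence. The starting point is the exact equality
$$\P[\widehat{S}^* = S] \;=\; \P\Big[\frac{M_{S^c}}{u_p} - \frac{m_S}{u_p} < r\Big]$$
already derived in \eqref{eq:heavy-tailed-case-proof-0}. The key structural observation, which was not exploited in the original proof, is that $M_{S^c}=\max_{j\in S^c}\epsilon(j)$ and $-m_S=\max_{j\in S}(-\epsilon(j))$ are measurable with respect to disjoint blocks of the i.i.d.\ error vector, hence independent.

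First I would establish the joint convergence
$$\Big(\frac{M_{S^c}}{u_p},\ -\frac{m_S}{u_p}\Big) \;\Rightarrow\; \big((1-f^*)^{1/\alpha}Y^{(1)},\ (f^*)^{1/\alpha}Y^{(2)}\big),$$
where $Y^{(1)},Y^{(2)}$ are independent standard $\alpha$-Frechet variables. Marginally, the extreme value limit \eqref{eq:Frechet-limit-1} applied to the $(1-f)p$ errors on $S^c$ gives $M_{S^c}/u_{|S^c|}\Rightarrow Y^{(1)}$, and the Pareto tail asymptotics $u_n\sim n^{1/\alpha}$ yield the multiplicative correction $u_{|S^c|}/u_p \to (1-f^*)^{1/\alpha}$. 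Symmetry of the tail assumption \eqref{eq:pareto-tails} gives the analogous statement for $-m_S$ with scaling $(f^*)^{1/\alpha}$. The marginal statements promote to the joint statement because of the independence noted above. The continuous mapping theorem for the sum $(a,b)\mapsto a+b$ then produces
$$\frac{M_{S^c}}{u_p} - \frac{m_S}{u_p} \;\Rightarrow\; W \;:=\; (1-f^*)^{1/\alpha}Y^{(1)} + (f^*)^{1/\alpha}Y^{(2)}.$$
Since $W$ has a continuous distribution (an independent sum/degenerate limit of Frechet variables) and $r=r_p\to r^*$, a standard Portmanteau argument with a sandwich $r^*-\varepsilon < r_p < r^*+\varepsilon$ eventually, followed by $\varepsilon\downarrow 0$, gives $\P[\,\cdot\, < r_p]\to\P[W<r^*]$, which is the desired conclusion.

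The main obstacle will be the boundary regimes $f^*\in\{0,1\}$ where one of the subset sizes may fail to diverge and the Frechet limit is not directly available. If, say, $f\to 0$ with $fp$ bounded, then $-m_S$ is itself a maximum of a bounded number of terms and is tight, so $-m_S/u_p\to 0$ in probability; if $fp\to\infty$ but $f\to 0$, the Frechet limit for $-m_S/u_{|S|}$ still applies, but $u_{|S|}/u_p\to 0$ forces $-m_S/u_p\to 0$ in probability as well. In either subcase the joint limit matches the corollary's formula because $(f^*)^{1/\alpha}Y^{(2)}=0$ almost surely when $f^*=0$; the symmetric argument handles $f^*=1$. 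Outside these degenerate regimes the argument is routine and uses nothing beyond the classical extreme value theorem for Pareto tails already invoked in the text.
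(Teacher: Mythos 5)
Your proof is correct and, in fact, more direct than what the paper sketches. The paper states the corollary without proof, saying only that it ``follows with a small modification'' of the Theorem~\ref{thm:heavy-tails} argument, which went through the $L$/$U$ sandwich \eqref{eq:heavy-tailed-case-proof-3}; that sandwich was introduced precisely to avoid assuming $f$ and $r$ converge, so when they do converge one would collapse $L$ and $U$ back to tight two-sided bounds. You bypass the sandwich entirely and argue directly: since $M_{S^c}$ and $-m_S$ are functions of disjoint blocks of the i.i.d.\ errors, they are independent; the Frechet limit plus the quantile scaling $u_{\lfloor cp\rfloor}/u_p\to c^{1/\alpha}$ gives the two marginal limits $(1-f^*)^{1/\alpha}Y^{(1)}$ and $(f^*)^{1/\alpha}Y^{(2)}$; independence upgrades this to joint convergence; and the continuous mapping theorem with $(a,b)\mapsto a+b$, together with the continuity of the limit law $W$ and the fact that $r_p\to r^*$, finishes the argument. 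This is cleaner and arguably what the paper's remark \emph{should} have meant. Your treatment of the boundary cases $f^*\in\{0,1\}$ is also sound, though the two-subcase split (``$fp$ bounded'' vs.\ ``$fp\to\infty$'') can be avoided: the one-line union bound
\[
\P\Big[-\frac{m_S}{u_p}>\varepsilon\Big]\;\le\;|S|\cdot F(-\varepsilon u_p)\;\sim\;fp\cdot(\varepsilon u_p)^{-\alpha}\;\sim\;f\,\varepsilon^{-\alpha}\;\longrightarrow\;0
\]
handles all subsequences of $fp$ simultaneously and makes the split unnecessary. Finally, a small bookkeeping note: the event in \eqref{eq:heavy-tailed-case-proof-0} involves $\Delta/u_p = r\,p^{1/\alpha}/u_p$, which equals $r(1+o(1))$ but not $r$ exactly; this is absorbed by your $r_p\to r^*$ sandwich step, but is worth stating when you write ``the exact equality already derived.''
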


\comment{One might wonder if it would be meaningful to derive a ``phase transition'' under a different parametrization of the signal sizes, say 
\begin{equation} \label{eq:Pareto-parametrization-with-boundary}
    \Delta = p^{r/\alpha}.
\end{equation}
In this case, Theorem \ref{thm:heavy-tails} suggests that a ``phase transition'' takes place at $r=1$.
However, this non-multiplicative parametrization of the signal sizes would make power analysis (like in Example \ref{exmp:gap-when-signal-sparse}) dimension-dependent. 

To illustrate, in the case of Gaussian errors with variance 1, if we were interested in small signals of size $\sqrt{2r\log{p}}$, where $r<1$ is below the boundary \eqref{eq:strong-classification-boundary}, then we only need $n > 2/r$ samples to guarantee discovery of their support.
In the Pareto case with parametrization \eqref{eq:Pareto-parametrization-with-boundary}, however, if we were interested in small signals of size $p^{r/\alpha}$, where $r<1$, then the ``boundary'' says that we will need $n > p^{2(1-r)/\alpha}$ samples, which is exponential in the dimension $p$ and quickly diverges.
Recall that the ``boundary'' is really an asymptotic result in $p$. 
Such an approximation in finite dimensions becomes invalid}

\section*{Acknowledgements}
The authors thank Jinqi Shen and Yuanzhi Li for inspiring discussions.
%See \ref{suppA} for supplementary material.

% \begin{supplement}
% \sname{Supplement A}\label{suppA}
% \stitle{Title of the Supplement A}
% \slink[url]{http://www.e-publications.org/ims/support/dowload/imsart-ims.zip}
% \sdescription{Dum esset rex in accubitu suo, nardus mea dedit odorem suavitatis. Quoniam confortavit seras portarum tuarum, benedixit filiis tuis in te. Qui posuit fines tuos}
% \end{supplement}

\bibliography{references}

\begin{thebibliography}{36}
% BibTex style file: imsart-nameyear.bst, 2017-11-03
% Default style options (sort=1,type=nameyear).
% Used options (sort=1,type=nameyear).

\bibitem[\protect\citeauthoryear{Adler and Taylor}{2009}]{adler2009random}
\begin{bbook}[author]
\bauthor{\bsnm{Adler},~\bfnm{Robert~J}\binits{R.~J.}} \AND
  \bauthor{\bsnm{Taylor},~\bfnm{Jonathan~E}\binits{J.~E.}}
(\byear{2009}).
\btitle{Random fields and geometry}.
\bpublisher{Springer Science \& Business Media}.
\end{bbook}
\endbibitem

\bibitem[\protect\citeauthoryear{Arias-Castro and
  Chen}{2017}]{arias2017distribution}
\begin{barticle}[author]
\bauthor{\bsnm{Arias-Castro},~\bfnm{Ery}\binits{E.}} \AND
  \bauthor{\bsnm{Chen},~\bfnm{Shiyun}\binits{S.}}
(\byear{2017}).
\btitle{Distribution-free multiple testing}.
\bjournal{Electronic Journal of Statistics}
\bvolume{11}
\bpages{1983--2001}.
\end{barticle}
\endbibitem

\bibitem[\protect\citeauthoryear{Barber and
  Cand{\`e}s}{2015}]{barber2015controlling}
\begin{barticle}[author]
\bauthor{\bsnm{Barber},~\bfnm{Rina~Foygel}\binits{R.~F.}} \AND
  \bauthor{\bsnm{Cand{\`e}s},~\bfnm{Emmanuel~J}\binits{E.~J.}}
(\byear{2015}).
\btitle{Controlling the false discovery rate via knockoffs}.
\bjournal{The Annals of Statistics}
\bvolume{43}
\bpages{2055--2085}.
\end{barticle}
\endbibitem

\bibitem[\protect\citeauthoryear{Barndorff-Nielsen}{1963}]{barndorff1963limit}
\begin{barticle}[author]
\bauthor{\bsnm{Barndorff-Nielsen},~\bfnm{Ole}\binits{O.}}
(\byear{1963}).
\btitle{On the limit behaviour of extreme order statistics}.
\bjournal{The Annals of Mathematical Statistics}
\bvolume{34}
\bpages{992--1002}.
\end{barticle}
\endbibitem

\bibitem[\protect\citeauthoryear{Benjamini and
  Hochberg}{1995}]{benjamini1995controlling}
\begin{barticle}[author]
\bauthor{\bsnm{Benjamini},~\bfnm{Yoav}\binits{Y.}} \AND
  \bauthor{\bsnm{Hochberg},~\bfnm{Yosef}\binits{Y.}}
(\byear{1995}).
\btitle{Controlling the false discovery rate: a practical and powerful approach
  to multiple testing}.
\bjournal{Journal of the royal statistical society. Series B (Methodological)}
\bpages{289--300}.
\end{barticle}
\endbibitem

\bibitem[\protect\citeauthoryear{Berman}{1964}]{berman1964limit}
\begin{barticle}[author]
\bauthor{\bsnm{Berman},~\bfnm{Simeon~M}\binits{S.~M.}}
(\byear{1964}).
\btitle{Limit theorems for the maximum term in stationary sequences}.
\bjournal{The Annals of Mathematical Statistics}
\bpages{502--516}.
\end{barticle}
\endbibitem

\bibitem[\protect\citeauthoryear{Butucea et~al.}{2018}]{butucea2018variable}
\begin{barticle}[author]
\bauthor{\bsnm{Butucea},~\bfnm{Cristina}\binits{C.}},
  \bauthor{\bsnm{Ndaoud},~\bfnm{Mohamed}\binits{M.}},
  \bauthor{\bsnm{Stepanova},~\bfnm{Natalia~A.}\binits{N.~A.}} \AND
  \bauthor{\bsnm{Tsybakov},~\bfnm{Alexandre~B.}\binits{A.~B.}}
(\byear{2018}).
\btitle{Variable Selection with {H}amming Loss}.
\bjournal{The Annals of Statististics}
\bvolume{46}
\bpages{1837-1875}.
\end{barticle}
\endbibitem

\bibitem[\protect\citeauthoryear{Cai et~al.}{2007}]{cai2007estimation}
\begin{barticle}[author]
\bauthor{\bsnm{Cai},~\bfnm{T~Tony}\binits{T.~T.}},
  \bauthor{\bsnm{Jin},~\bfnm{Jiashun}\binits{J.}},
  \bauthor{\bsnm{Low},~\bfnm{Mark~G}\binits{M.~G.}} \betal{et~al.}
(\byear{2007}).
\btitle{Estimation and confidence sets for sparse normal mixtures}.
\bjournal{The Annals of Statistics}
\bvolume{35}
\bpages{2421--2449}.
\end{barticle}
\endbibitem

\bibitem[\protect\citeauthoryear{Conlon, Fox and
  Sudakov}{2015}]{conlon2015recent}
\begin{bincollection}[author]
\bauthor{\bsnm{Conlon},~\bfnm{David}\binits{D.}},
  \bauthor{\bsnm{Fox},~\bfnm{Jacob}\binits{J.}} \AND
  \bauthor{\bsnm{Sudakov},~\bfnm{Benny}\binits{B.}}
(\byear{2015}).
\btitle{Recent developments in graph {R}amsey theory}.
In \bbooktitle{Surveys in combinatorics 2015}.
\bseries{London Math. Soc. Lecture Note Ser.}
\bvolume{424}
\bpages{49--118}.
\bpublisher{Cambridge Univ. Press, Cambridge}.
\end{bincollection}
\endbibitem

\bibitem[\protect\citeauthoryear{De~Haan and Ferreira}{2007}]{de2007extreme}
\begin{bbook}[author]
\bauthor{\bsnm{De~Haan},~\bfnm{Laurens}\binits{L.}} \AND
  \bauthor{\bsnm{Ferreira},~\bfnm{Ana}\binits{A.}}
(\byear{2007}).
\btitle{Extreme value theory: an introduction}.
\bpublisher{Springer Science \& Business Media}.
\end{bbook}
\endbibitem

\bibitem[\protect\citeauthoryear{Domingos and
  Pazzani}{1997}]{domingos1997optimality}
\begin{barticle}[author]
\bauthor{\bsnm{Domingos},~\bfnm{Pedro}\binits{P.}} \AND
  \bauthor{\bsnm{Pazzani},~\bfnm{Michael}\binits{M.}}
(\byear{1997}).
\btitle{On the optimality of the simple {B}ayesian classifier under zero-one
  loss}.
\bjournal{Machine learning}
\bvolume{29}
\bpages{103--130}.
\end{barticle}
\endbibitem

\bibitem[\protect\citeauthoryear{Donoghue}{2014}]{donoghue2014distributions}
\begin{bbook}[author]
\bauthor{\bsnm{Donoghue},~\bfnm{William~F}\binits{W.~F.}}
(\byear{2014}).
\btitle{Distributions and {F}ourier transforms}
\bvolume{32}.
\bpublisher{Academic Press}.
\end{bbook}
\endbibitem

\bibitem[\protect\citeauthoryear{Donoho and Jin}{2004}]{donoho2004higher}
\begin{barticle}[author]
\bauthor{\bsnm{Donoho},~\bfnm{David}\binits{D.}} \AND
  \bauthor{\bsnm{Jin},~\bfnm{Jiashun}\binits{J.}}
(\byear{2004}).
\btitle{Higher {C}riticism for detecting sparse heterogeneous mixtures}.
\bjournal{The Annals of Statistics}
\bvolume{32}
\bpages{962--994}.
\end{barticle}
\endbibitem

\bibitem[\protect\citeauthoryear{Dudoit, Shaffer and
  Boldrick}{2003}]{dudoit2003multiple}
\begin{barticle}[author]
\bauthor{\bsnm{Dudoit},~\bfnm{Sandrine}\binits{S.}},
  \bauthor{\bsnm{Shaffer},~\bfnm{Juliet~Popper}\binits{J.~P.}} \AND
  \bauthor{\bsnm{Boldrick},~\bfnm{Jennifer~C}\binits{J.~C.}}
(\byear{2003}).
\btitle{Multiple hypothesis testing in microarray experiments}.
\bjournal{Statistical Science}
\bpages{71--103}.
\end{barticle}
\endbibitem

\bibitem[\protect\citeauthoryear{Embrechts, Kl{\"u}ppelberg and
  Mikosch}{2013}]{embrechts2013modelling}
\begin{bbook}[author]
\bauthor{\bsnm{Embrechts},~\bfnm{Paul}\binits{P.}},
  \bauthor{\bsnm{Kl{\"u}ppelberg},~\bfnm{Claudia}\binits{C.}} \AND
  \bauthor{\bsnm{Mikosch},~\bfnm{Thomas}\binits{T.}}
(\byear{2013}).
\btitle{Modelling extremal events: for insurance and finance}
\bvolume{33}.
\bpublisher{Springer Science \& Business Media}.
\end{bbook}
\endbibitem

\bibitem[\protect\citeauthoryear{Erd{\"o}s and
  Szekeres}{1935}]{erdos1935combinatorial}
\begin{barticle}[author]
\bauthor{\bsnm{Erd{\"o}s},~\bfnm{Paul}\binits{P.}} \AND
  \bauthor{\bsnm{Szekeres},~\bfnm{George}\binits{G.}}
(\byear{1935}).
\btitle{A combinatorial problem in geometry}.
\bjournal{Compositio mathematica}
\bvolume{2}
\bpages{463--470}.
\end{barticle}
\endbibitem

\bibitem[\protect\citeauthoryear{Fox}{2009}]{fox2009lecture}
\begin{bincollection}[author]
\bauthor{\bsnm{Fox},~\bfnm{Jacob}\binits{J.}}
(\byear{2009}).
\btitle{Lecture 5: {R}amsey {T}heory}.
In \bbooktitle{MAT 307: Combinatorics (Spring 2009), MIT Lecture notes,
  \url{http://math.mit.edu/~fox/MAT307.html}}.
\end{bincollection}
\endbibitem

\bibitem[\protect\citeauthoryear{Genovese
  et~al.}{2012}]{genovese2012comparison}
\begin{barticle}[author]
\bauthor{\bsnm{Genovese},~\bfnm{Christopher~R}\binits{C.~R.}},
  \bauthor{\bsnm{Jin},~\bfnm{Jiashun}\binits{J.}},
  \bauthor{\bsnm{Wasserman},~\bfnm{Larry}\binits{L.}} \AND
  \bauthor{\bsnm{Yao},~\bfnm{Zhigang}\binits{Z.}}
(\byear{2012}).
\btitle{A comparison of the lasso and marginal regression}.
\bjournal{Journal of Machine Learning Research}
\bvolume{13}
\bpages{2107--2143}.
\end{barticle}
\endbibitem

\bibitem[\protect\citeauthoryear{Gnedenko}{1943}]{gnedenko1943distribution}
\begin{barticle}[author]
\bauthor{\bsnm{Gnedenko},~\bfnm{Boris}\binits{B.}}
(\byear{1943}).
\btitle{Sur la distribution limite du terme maximum d'une serie aleatoire}.
\bjournal{Annals of mathematics}
\bpages{423--453}.
\end{barticle}
\endbibitem

\bibitem[\protect\citeauthoryear{Hall et~al.}{2010}]{hall2010innovated}
\begin{barticle}[author]
\bauthor{\bsnm{Hall},~\bfnm{Peter}\binits{P.}},
  \bauthor{\bsnm{Jin},~\bfnm{Jiashun}\binits{J.}} \betal{et~al.}
(\byear{2010}).
\btitle{Innovated higher criticism for detecting sparse signals in correlated
  noise}.
\bjournal{The Annals of Statistics}
\bvolume{38}
\bpages{1686--1732}.
\end{barticle}
\endbibitem

\bibitem[\protect\citeauthoryear{Haupt, Castro and
  Nowak}{2011}]{haupt2011distilled}
\begin{barticle}[author]
\bauthor{\bsnm{Haupt},~\bfnm{Jarvis}\binits{J.}},
  \bauthor{\bsnm{Castro},~\bfnm{Rui~M}\binits{R.~M.}} \AND
  \bauthor{\bsnm{Nowak},~\bfnm{Robert}\binits{R.}}
(\byear{2011}).
\btitle{Distilled sensing: Adaptive sampling for sparse detection and
  estimation}.
\bjournal{IEEE Transactions on Information Theory}
\bvolume{57}
\bpages{6222--6235}.
\end{barticle}
\endbibitem

\bibitem[\protect\citeauthoryear{Hochberg}{1988}]{hochberg1988sharper}
\begin{barticle}[author]
\bauthor{\bsnm{Hochberg},~\bfnm{Yosef}\binits{Y.}}
(\byear{1988}).
\btitle{A sharper Bonferroni procedure for multiple tests of significance}.
\bjournal{Biometrika}
\bvolume{75}
\bpages{800--802}.
\end{barticle}
\endbibitem

\bibitem[\protect\citeauthoryear{Holm}{1979}]{holm1979simple}
\begin{barticle}[author]
\bauthor{\bsnm{Holm},~\bfnm{Sture}\binits{S.}}
(\byear{1979}).
\btitle{A simple sequentially rejective multiple test procedure}.
\bjournal{Scandinavian journal of statistics}
\bpages{65--70}.
\end{barticle}
\endbibitem

\bibitem[\protect\citeauthoryear{Ingster}{1998}]{ingster1998minimax}
\begin{barticle}[author]
\bauthor{\bsnm{Ingster},~\bfnm{Yu~I}\binits{Y.~I.}}
(\byear{1998}).
\btitle{Minimax detection of a signal for $l^p_n$-balls}.
\bjournal{Mathematical Methods of Statistics}
\bvolume{7}
\bpages{401--428}.
\end{barticle}
\endbibitem

\bibitem[\protect\citeauthoryear{Ji and Jin}{2012}]{ji2012ups}
\begin{barticle}[author]
\bauthor{\bsnm{Ji},~\bfnm{Pengsheng}\binits{P.}} \AND
  \bauthor{\bsnm{Jin},~\bfnm{Jiashun}\binits{J.}}
(\byear{2012}).
\btitle{{UPS} delivers optimal phase diagram in high-dimensional variable
  selection}.
\bjournal{The Annals of Statistics}
\bvolume{40}
\bpages{73--103}.
\end{barticle}
\endbibitem

\bibitem[\protect\citeauthoryear{Kallitsis et~al.}{2016}]{kallitsis2016amon}
\begin{barticle}[author]
\bauthor{\bsnm{Kallitsis},~\bfnm{Michael}\binits{M.}},
  \bauthor{\bsnm{Stoev},~\bfnm{Stilian~A}\binits{S.~A.}},
  \bauthor{\bsnm{Bhattacharya},~\bfnm{Shrijita}\binits{S.}} \AND
  \bauthor{\bsnm{Michailidis},~\bfnm{George}\binits{G.}}
(\byear{2016}).
\btitle{{AMON}: An open source architecture for online monitoring, statistical
  analysis, and forensics of multi-gigabit streams}.
\bjournal{IEEE Journal on Selected Areas in Communications}
\bvolume{34}
\bpages{1834--1848}.
\end{barticle}
\endbibitem

\bibitem[\protect\citeauthoryear{Leadbetter, Lindgren and
  Rootz{\'e}n}{2012}]{leadbetter2012extremes}
\begin{bbook}[author]
\bauthor{\bsnm{Leadbetter},~\bfnm{Malcolm~R}\binits{M.~R.}},
  \bauthor{\bsnm{Lindgren},~\bfnm{Georg}\binits{G.}} \AND
  \bauthor{\bsnm{Rootz{\'e}n},~\bfnm{Holger}\binits{H.}}
(\byear{2012}).
\btitle{Extremes and related properties of random sequences and processes}.
\bpublisher{Springer Science \& Business Media}.
\end{bbook}
\endbibitem

\bibitem[\protect\citeauthoryear{McCormick and
  Mittal}{1976}]{mccormick1976weak}
\begin{bbook}[author]
\bauthor{\bsnm{McCormick},~\bfnm{WP}\binits{W.}} \AND
  \bauthor{\bsnm{Mittal},~\bfnm{Yashaswini}\binits{Y.}}
(\byear{1976}).
\btitle{On weak convergence of the maximum}.
\bpublisher{Stanford University. Department of Statistics}.
\end{bbook}
\endbibitem

\bibitem[\protect\citeauthoryear{Nichols and
  Hayasaka}{2003}]{nichols2003controlling}
\begin{barticle}[author]
\bauthor{\bsnm{Nichols},~\bfnm{Thomas}\binits{T.}} \AND
  \bauthor{\bsnm{Hayasaka},~\bfnm{Satoru}\binits{S.}}
(\byear{2003}).
\btitle{Controlling the familywise error rate in functional neuroimaging: a
  comparative review}.
\bjournal{Statistical methods in medical research}
\bvolume{12}
\bpages{419--446}.
\end{barticle}
\endbibitem

\bibitem[\protect\citeauthoryear{Ramsey}{2009}]{ramsey2009problem}
\begin{bincollection}[author]
\bauthor{\bsnm{Ramsey},~\bfnm{Frank~P}\binits{F.~P.}}
(\byear{2009}).
\btitle{On a problem of formal logic}.
In \bbooktitle{Classic Papers in Combinatorics}
\bpages{1--24}.
\bpublisher{Springer}.
\end{bincollection}
\endbibitem

\bibitem[\protect\citeauthoryear{Resnick}{2013}]{resnick2013extreme}
\begin{bbook}[author]
\bauthor{\bsnm{Resnick},~\bfnm{Sidney~I}\binits{S.~I.}}
(\byear{2013}).
\btitle{Extreme values, regular variation and point processes}.
\bpublisher{Springer}.
\end{bbook}
\endbibitem

\bibitem[\protect\citeauthoryear{Resnick and Tomkins}{1973}]{resnick1973almost}
\begin{barticle}[author]
\bauthor{\bsnm{Resnick},~\bfnm{Sidney~I}\binits{S.~I.}} \AND
  \bauthor{\bsnm{Tomkins},~\bfnm{RJ}\binits{R.}}
(\byear{1973}).
\btitle{Almost sure stability of maxima}.
\bjournal{Journal of Applied Probability}
\bvolume{10}
\bpages{387--401}.
\end{barticle}
\endbibitem

\bibitem[\protect\citeauthoryear{Robbins}{1955}]{robbins1955remark}
\begin{barticle}[author]
\bauthor{\bsnm{Robbins},~\bfnm{Herbert}\binits{H.}}
(\byear{1955}).
\btitle{A remark on {S}tirling's formula}.
\bjournal{The American mathematical monthly}
\bvolume{62}
\bpages{26--29}.
\end{barticle}
\endbibitem

\bibitem[\protect\citeauthoryear{Slepian}{1962}]{slepian1962one}
\begin{barticle}[author]
\bauthor{\bsnm{Slepian},~\bfnm{David}\binits{D.}}
(\byear{1962}).
\btitle{The one-sided barrier problem for Gaussian noise}.
\bjournal{Bell Labs Technical Journal}
\bvolume{41}
\bpages{463--501}.
\end{barticle}
\endbibitem

\bibitem[\protect\citeauthoryear{Taqqu}{2003}]{taqqu:2003-livre}
\begin{bincollection}[author]
\bauthor{\bsnm{Taqqu},~\bfnm{Murad~S.}\binits{M.~S.}}
(\byear{2003}).
\btitle{Fractional {B}rownian motion and long-range dependence}.
In \bbooktitle{{\it Theory and Applications of Long-range Dependence}}
(\beditor{\bfnm{P.}\binits{P.}~\bsnm{Doukhan}},
  \beditor{\bfnm{G.}\binits{G.}~\bsnm{Oppenheim}} \AND
  \beditor{\bfnm{M.~S.}\binits{M.~S.}~\bsnm{Taqqu}}, eds.)
\bpages{5-38}.
\bpublisher{Birkh{\"a}user}.
\end{bincollection}
\endbibitem

\bibitem[\protect\citeauthoryear{{\v{S}}id{\'a}k}{1967}]{vsidak1967rectangular}
\begin{barticle}[author]
\bauthor{\bsnm{{\v{S}}id{\'a}k},~\bfnm{Zbyn{\v{e}}k}\binits{Z.}}
(\byear{1967}).
\btitle{Rectangular confidence regions for the means of multivariate normal
  distributions}.
\bjournal{Journal of the American Statistical Association}
\bvolume{62}
\bpages{626--633}.
\end{barticle}
\endbibitem

\end{thebibliography}

\end{document}